\theoremstyle{plain}
\newtheorem{theorem}{Theorem}[section]
\newtheorem{lemma}[theorem]{Lemma}
\newtheorem*{lemma*}{Lemma}
\newtheorem{proposition}[theorem]{Proposition}
\newtheorem{corollary}[theorem]{Corollary}
\newtheorem*{corollary*}{Corollary}
\newtheorem*{claim*}{Claim}
\newtheorem*{theorem*}{Theorem}
\newtheoremstyle{break}%
{}{}%
{\itshape}{}%
{\bfseries}{}% % Note that final punctuation is omitted.
{\newline}{}
\theoremstyle{break}
\theoremstyle{definition}
\newtheorem{definition}[theorem]{Definition}
\newtheorem*{definition*}{Definition}
\newtheorem{example}[theorem]{Example}
\theoremstyle{remark}
\newtheorem{obs*}[theorem]{Observation}
\newtheorem*{remark}{Remark}
\newtheorem{remark*}[theorem]{Remark}
\newcommand{\N}{\mathbb{N}}
\newcommand{\Z}{\mathbb{Z}}
\newcommand{\R}{\mathbb{R}}
\newcommand{\itembf}[2]{%
\ifthenelse{\isempty{#1}}{\item {\bfseries #2}}{\item[\textbf{#1}]{\bfseries #2}}%
}
\newcommand{\dfn}{\mathrel{\mathop:}=}
\newcommand{\sqed}{\hfill $/\!\!/$}
\newcommand{\scratch}[1]{{\color{red}\textbf{#1}}}
\DeclareMathAlphabet{\mathbbold}{U}{bbold}{m}{n}
\DeclareMathAlphabet{\cmb}{OT1}{cmbr}{m}{n}
\DeclareMathOperator{\img}{im}
\DeclareMathOperator{\Map}{Map}
\DeclareMathOperator{\Homeo}{Homeo}
\DeclareMathOperator{\inter}{int}
\DeclareMathOperator{\diam}{diam}
\DeclareMathOperator{\stab}{stab}
\DeclareMathOperator{\Ends}{Ends}
\DeclareMathOperator{\asdim}{asdim}
\newlist{clist}{enumerate}{1}
\setlist[clist]{label=(\roman*),itemsep=0ex,
  topsep=.5ex,parsep=0ex,leftmargin=3.5em}
\newlist{citem}{itemize}{1}
\setlist[citem,1]{label=\textbullet,topsep=1ex,itemsep=1ex,parsep=0ex}
\renewcommand{\scratch}[1]{{}}
\crefname{theorem}{Theorem}{Theorems}
\crefname{lemma}{Lemma}{Lemmas}
\crefname{proposition}{Proposition}{Propositions}
\crefname{remark*}{Remark}{Remarks}
\crefname{example}{Example}{Examples}
\crefname{definition}{Definition}{Definitions}
\crefname{corollary}{Corollary}{Corollaries}
\newcommand{\A}{\mathcal A}
\newcommand{\G}{\mathcal G}
\newcommand{\cc}{\mathcal C}
\newcommand{\M}{\mathcal M}
\newcommand{\X}{\mathscr X}
\newcommand{\xc}{{\vphantom{\mathscr X}\smash{\hat{\mathscr X}}}}
\DeclareMathOperator{\PMap}{PMap}
\DeclareMathOperator{\Id}{Id}
\newcounter{COMMENTS}
\newcommand{\newComment}[3]{
    \expandafter\newcommand\csname #1\endcsname[1]{ %Defines \Firstname{...comment...}
        \textbf{%
		\color{#3}(\uppercase{#2}\theCOMMENTS)%
        }
        \marginpar{\scriptsize\raggedright\textbf{%
			{\color{#3}(\uppercase{#2}\theCOMMENTS)#1: 
	}} ##1}
	    \stepcounter{COMMENTS}
    }
    \expandafter\newcommand\csname #2Annot\endcsname[1]{{\color{#3} ##1}} %Defines \InitialsAnnot{...words...}
}
\begin{document}

\title[Geometric models for surface mapping class groups]
{Geometric models and asymptotic dimension for 
  infinite-type surface mapping class
groups}
\author{Michael C.\ Kopreski, George Shaji}
%\email{kopreski@math.utah.edu, georges@math.utah.edu}
%\date{last compiled \today}

\begin{abstract}
  Let $S$ be an infinite-type surface and let $G \leq \Map(S)$ be a
  locally bounded Polish subgroup.
  We construct a metric graph $\M$ of simple arcs and
  curves on $S$ preserved by the action of 
  $G$ and for which the vertex orbit map $G \to V(\M)$ is a coarse
  equivalence; if $G$ is boundedly generated, then $\M$ is a
Cayley--Abels--Rosendal graph for $G$ and the orbit map is a
quasi-isometry. In particular, if $S$ contains a non-displaceable
subsurface and $G \geq \PMap_c(S)$ is boundedly generated or 
$G \in \{\overline{\PMap_c(S)},
  \PMap(S), \Map(S) \}$ and is locally bounded,
  then $\asdim \M = \asdim G = \infty$.
  This result completes
  the classification of the asymptotic dimension of stable
  boundedly generated infinite-type surface mapping class groups begun by
  Grant--Rafi--Verberne.
\end{abstract}

\maketitle
%%%%%%%%%%%%%%%%%%%%%%%%%%%%%%%%%%%%%

\section{Introduction and main results}\label{sec:intro}

Let $S$ be a surface of infinte topological type.  A
\textit{(metric) arc and curve model} for 
$G \leq \Map(S)$ is a connected (metric) 
graph whose vertices are collections of (possibly intersecting)
simple arcs and curves on
$S$, with an isometric action of $G$ induced by the
permutation of its vertices.  

\begin{theorem}\label{thm:model} 
  Let $S$ be an infinite-type surface and let $G \leq \Map(S)$ be a locally bounded
  Polish subgroup.
  \begin{enumerate}
    \item There exists a metric arc and curve model $\M$ for
  $G$ for which the orbit map restricted to $V(\mathcal{M})$ is a continuous 
  coarse equivalence.
    \item  
  If additionally $G$ is boundedly generated, then $\M$
  is a Cayley--Abels--Rosendal graph for $G$ and the
  orbit map %(to $\mathcal{M}$) 
  is a continuous quasi-isometry.
  \end{enumerate}
\end{theorem}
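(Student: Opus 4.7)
The plan is to work within Rosendal's framework for the large-scale geometry of Polish groups. The heart of the construction is a configuration $\mathbf{c}_0$ of simple arcs and curves on $S$ whose pointwise stabilizer $H \dfn \stab_G(\mathbf{c}_0)$ is both open in $G$ (ensuring continuity) and coarsely bounded in $G$ (ensuring coarse properness). The vertex set of $\M$ is then the orbit $G\cdot \mathbf{c}_0$, equivariantly identified with $G/H$, and the orbit map $g\mapsto g\cdot \mathbf{c}_0$ becomes the natural candidate for a coarse equivalence.

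To produce $\mathbf{c}_0$, I would start from a coarsely bounded open identity neighborhood $U\subseteq G$ (which exists by local boundedness) and exhaust $S$ by finite-type subsurfaces $S_1\subseteq S_2\subseteq \cdots$. On each $S_n$ I choose a rigid finite arc/curve system---for instance, a pants decomposition supplemented by an adequate dual system of arcs---and assemble the resulting data into a limit configuration $\mathbf{c}_0$ on $S$. The key technical point is to arrange enough combinatorial rigidity so that, combined with local boundedness of $G$, the group $\stab_G(\mathbf{c}_0)$ is forced into a coarsely bounded subgroup; this is where I expect the main obstacle to lie.

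Edges are defined so that $\M$ is connected and $G$-invariant. The cleanest choice is to join $v$ and $w$ whenever $w\in U\cdot v$; equivalently, one may adopt a geometric ``elementary move'' relation---swap an arc or curve, or apply a small-support homeomorphism---provided that the resulting moves span a coarsely bounded subset of $G$. Either way, the $U$-neighborhood of any vertex is coarsely bounded, so the orbit map $G\to V(\M)$ is continuous, cobounded, and metrically proper. A Milnor--Schwarz-type theorem for Polish groups (due to Rosendal) then yields part~(1): the orbit map is a continuous coarse equivalence.

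For part~(2), when $G$ is boundedly generated, I would choose $U$ to be a symmetric coarsely bounded open generating set. The $U$-edge structure on $G\cdot \mathbf{c}_0 = G/H$ then realizes $\M$ as a Cayley--Abels--Rosendal graph for $G$ relative to $H$, and the same Milnor--Schwarz argument upgrades the coarse equivalence to a quasi-isometry. The main obstacle remains the construction of the rigid configuration $\mathbf{c}_0$: for subgroups containing $\PMap_c(S)$ one can exploit compactly supported homeomorphisms explicitly, but for an abstract locally bounded Polish subgroup one must find a configuration whose $G$-pointwise stabilizer is coarsely bounded, plausibly via a Baire-category argument leveraging local boundedness to control the $U$-orbits of generic configurations.
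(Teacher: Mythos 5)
Your high-level plan — find a finite arc/curve system $\mu$ with $\stab_G(\mu)$ open and coarsely bounded, take $V(\M)=G\mu\cong G/\stab_G(\mu)$, and invoke a \v Svarc--Milnor statement — matches the paper. But two steps in your proposal contain genuine gaps.

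First, your construction of the configuration $\mathbf{c}_0$ is off: you propose assembling rigid systems over an exhaustion $S_1\subset S_2\subset\cdots$ into a ``limit configuration,'' but such an object is an \emph{infinite} collection of arcs and curves, so it is not a vertex of an arc and curve model as defined (vertices live in $\mathcal{K}(S)$, the \emph{finite} collections), and more seriously its stabilizer would fail to be open: the subgroup base for $\Map(S)$ at identity consists of pointwise stabilizers of \emph{compact} subsurfaces, so fixing infinitely many curves gives a closed, non-open subgroup and the orbit map ceases to be continuous. The Baire-category route you mention is also not needed. The key observation the paper uses is much simpler: because $G\leq\Map(S)$ has a local base at identity by pointwise stabilizers $\nu_\Sigma$ of compact essential subsurfaces $\Sigma$, local boundedness of $G$ directly gives a \emph{single} compact $\Sigma$ with $\nu_\Sigma$ coarsely bounded (\cref{lem:lb_stab_subsurface}). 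One then lets $\mu_0$ be a filling finite system in $\Sigma$ and $\mu=\mu_0\cup\partial\Sigma$; Alexander's method forces the kernel of $\stab_G(\mu)\to\operatorname{Sym}(\mu)$ to equal $\nu_\Sigma$, so $\nu_\mu$ is a finite-index overgroup of $\nu_\Sigma$ and hence coarsely bounded by \cref{lem:fin_ext_cb}. There is no genuine ``main obstacle'' once you notice this local base structure.

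Second, your proposed edge relation ``join $v$ to $w$ iff $w\in U\cdot v$'' does not give a connected graph in part (1): if $G$ is merely locally bounded but not boundedly generated, a coarsely bounded identity neighborhood $U$ need not generate $G$, and $G\mu$ splits into several $\langle U\rangle$-orbits. The paper's fix is to pass to a countable set $Z$ generating $G$ over $\nu_\mu$ and build a \emph{metric} graph with edge lengths growing along an enumeration of $Z$ (the coarse Cayley--Abels--Rosendal graph of \cref{lem:ccar_existence}), replacing cocompactness by \emph{bounded-cocompactness}. This is exactly what makes part (1) work in the merely locally bounded case; in the boundedly generated case $Z$ can be taken finite and one recovers your picture, which is why your sketch of part (2) is essentially correct.
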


In particular, the coarse equivalence and quasi-isometry types of
$G \leq \Map(S)$ are described by a (metric) arc and curve model, whenever
they are
well-defined. A compact subsurface $\Delta \subset S$ 
is \textit{non-displaceable by $G$} if
there exists no $f \in G \leq \Map(S)$ such that
$\Delta \cap f\Delta = \varnothing$.  
From \cref{thm:model} we obtain: 

\begin{theorem}\label{thm:asdim}
 Let $S$ be an infinite-type surface and $G\leq \Map(S)$ a Polish
 subgroup  with a
 non-displaceable subsurface and containing $\PMap_c(S)$. If
 $G$ is boundedly generated or $G \in \{\overline{\PMap_c(S)},
 \PMap(S),\Map(S) \}$ and locally bounded, then $\asdim G = \infty$. 
\end{theorem}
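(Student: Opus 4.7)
The plan is to use \cref{thm:model} to reduce the problem to computing $\asdim \M$, and then to exhibit coarse embeddings $\Z^n \hookrightarrow \M$ for every $n \geq 1$. In both cases of the hypothesis of \cref{thm:asdim}, \cref{thm:model} provides a continuous coarse equivalence $G \to V(\M)$, so $\asdim G = \asdim \M$. Since asymptotic dimension is monotone under coarse embeddings and $\asdim \Z^n = n$, producing such an embedding for every $n$ will force $\asdim G = \infty$.

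The first step is to supply many commuting Dehn twists in $G$. I would observe that non-displaceability passes to enlargements: if $\Sigma \supset \Delta$ is any compact subsurface, then $g\Sigma \supset g\Delta$ meets $\Delta \subset \Sigma$ for every $g \in G$, so $\Sigma$ is also non-displaceable by $G$. Since $S$ is of infinite type, I can build a nested sequence of compact non-displaceable subsurfaces $\Sigma_1 \subset \Sigma_2 \subset \cdots$ containing $\Delta$ of unbounded topological complexity. Inside $\Sigma_n$ I fix $n$ pairwise disjoint essential simple closed curves $c_1, \dots, c_n$ together with a base curve $\alpha_n \subset \Sigma_n$ intersecting each $c_i$ essentially; the Dehn twists $T_{c_i}$ commute, lie in $\PMap_c(S) \leq G$, and generate a subgroup $A_n \cong \Z^n \leq G$.

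The central task is then to show that the orbit map $A_n \to V(\M)$, $g \mapsto g\alpha_n$, is a coarse embedding. The upper bound on $d_\M(g\alpha_n, \alpha_n)$ in the word length of $g \in A_n$ is immediate from \cref{thm:model}. For the essential lower bound I would appeal to annular subsurface projections: a Dehn-twist power $T_{c_i}^{k_i}$ shifts the projection $\pi_{c_i}$ to the annular curve graph of $c_i$ by approximately $|k_i|$, and a Behrstock-type inequality together with a bounded-geodesic-image principle would yield
\[
d_\M\bigl(T_{c_1}^{k_1}\cdots T_{c_n}^{k_n}\alpha_n,\,\alpha_n\bigr) \;\gtrsim\; \sum_{i=1}^n |k_i|.
\]
Non-displaceability of $\Sigma_n$ is the geometric input that keeps these annular projections well-defined along paths in $\M$ connecting $\alpha_n$ to an $A_n$-translate of itself.

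The main obstacle is precisely establishing this linear lower bound in the setting of $\M$. Because the edges of $\M$ arise from the construction of \cref{thm:model} rather than from classical elementary moves on curve graphs, I would need to verify that annular projections — or some coarse substitute sensitive to Dehn-twist powers — are coarsely Lipschitz against the adjacency relation of $\M$. Non-displaceability of $\Delta$ is precisely what makes this possible: it obstructs the existence of shortcut edges in $\M$ that would otherwise collapse arbitrary powers of $T_{c_i}$ into bounded distance, and thereby pins the orbit of $A_n$ to have the full $\Z^n$ geometry inside $\M$.
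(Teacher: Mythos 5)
Your opening move is the right one: applying \cref{thm:model} to reduce to $\asdim \M$ for a model $\M = \M_{\mu,Z}$ is exactly how the paper begins. But the rest of your proposal diverges from the paper and, as written, contains a genuine gap in the central step.

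The proposed lower bound, coarsely embedding $\Z^n$ into $\M$ by twisting on $n$ disjoint curves $c_1,\dots,c_n$, does not work in general. The model $\M$ the paper builds is a \emph{witness-cocompact} arc and curve model, and its coarse geometry is governed by the distance formula over the witness set $\X^{\M}$ (\cref{thm:hhs}); witnesses are compact essential subsurfaces without pants components, and annular subsurfaces are not among them. Consequently annular projections are absent from the distance formula, so a Behrstock inequality in the $c_i$-annuli gives no lower bound on $d_\M$. Indeed, the paper records (\cref{rmk:flats}) that the HHS rank of $\M_W$ equals the maximal number of pairwise disjoint \emph{witness subsurfaces}, not disjoint curves, and it explicitly handles the case where this number is finite: there the restricted models $\M_{\Sigma_d}$ are $\delta$-hyperbolic, so they contain no $\Z^2$ quasi-flat at all, and your approach would produce no lower bound whatsoever. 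The paper instead gets the lower bound in that case from the topological dimension of the Gromov boundary $\partial\cc\Sigma$ via Gabai's theorem (\cref{thm:gabai}, \cref{prop:bdry_cpt}, \cref{thm:cm_hyp}), a route entirely different from a flats argument.

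Your proposal also skips what is in fact most of \cref{sec:nondisp}: arranging that $\M_{\mu,Z}$ is witness-cocompact at all. \cref{thm:model} gives some coarse model for any locally bounded $G$, but \cref{thm:ccpt_met_asdim} only applies when $\M$ has a witness and has uniformly bounded edge- and self-intersection numbers. Non-displaceability of $\Delta$ is used precisely to get a witness: for filling $\mu_0 \subset \Delta$ and $\mu = \mu_0 \cup \partial\Delta$, every $g\mu$ meets $\Delta$, so $\Delta$ is a witness; it is not used to ``obstruct shortcut edges,'' which is not a notion in the paper's framework. The intersection bound \textit{(ii')}, $i(\mu,z\mu)$ uniformly bounded over $z\in Z$, is immediate only when $G$ is boundedly generated and $Z$ is finite. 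For $G = \overline{\PMap_c(S)}$, $\PMap(S)$, and $\Map(S)$ the generating set $Z$ must be countably infinite, and constructing it so that \textit{(ii')} holds is the content of \cref{sec:sm_int_Z}: a Dehn--Lickorish-type generating set $T$ for $\PMap_c(S)$ eventually supported off $\Delta$, a family $H$ of handle shifts routed through fixed strips in $\Delta$ (\cref{lem:hshift}), and a set-theoretic section $\sigma$ of $\Map(S) \to \Homeo(E,E_g)$ built via Richards' classification so that $\sigma(\varphi)$ is uniformly tame on $\Delta$ (\cref{lem:end_section}). None of this appears in your outline, and without it you do not have the hypotheses of \cref{thm:ccpt_met_asdim}.
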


\cref{thm:asdim} answers 
\cite[Qn.~1.8]{shift} of Grant--Rafi--Verberne 
 and completes their characterization of 
the asymptotic dimension of stable boundedly generated
infinite-type surface mapping class groups.

\begin{corollary}\label{cor:asdim}
  For stable $S$ with boundedly generated $\Map(S)$, 
  $\asdim \Map(S) = \infty$ if and
  only if $S$ has a non-displaceable subsurface or an essential
  shift; otherwise, $\Map(S)$ is coarsely bounded and $\asdim \Map(S)
  = 0$.
\end{corollary}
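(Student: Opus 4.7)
The plan is to combine \cref{thm:asdim} with the partial classification of Grant--Rafi--Verberne in \cite{shift}. Their work establishes two facts for stable $S$ with boundedly generated $\Map(S)$: first, if $S$ admits an essential shift then $\asdim \Map(S) = \infty$; and second, if $S$ has neither a non-displaceable subsurface nor an essential shift, then $\Map(S)$ is coarsely bounded. The case left open is precisely a non-displaceable subsurface with no essential shift, which is \cite[Qn.~1.8]{shift} and is exactly what \cref{thm:asdim} resolves.

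Given these two inputs, the corollary is pure case analysis. For the forward implication, I would assume $S$ has a non-displaceable subsurface or an essential shift. In the essential-shift case, apply Grant--Rafi--Verberne directly. In the non-displaceable case, apply \cref{thm:asdim} to $G = \Map(S)$: the containment $\PMap_c(S) \leq \Map(S)$ is automatic and $\Map(S)$ is boundedly generated by hypothesis, so the hypotheses of \cref{thm:asdim} are met and $\asdim \Map(S) = \infty$. For the reverse implication together with the final conclusion, suppose $S$ has neither. Then Grant--Rafi--Verberne give that $\Map(S)$ is coarsely bounded, so it has bounded diameter in its canonical coarse structure, whence $\asdim \Map(S) = 0$.

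The main obstacle does not lie in this corollary, which amounts to bookkeeping once \cref{thm:asdim} is available. The real work is upstream: the construction of the arc and curve model $\M$ in \cref{thm:model} and the extraction from it of an asymptotic witness of infinite dimension anchored at the non-displaceable subsurface. The only care required here is to confirm that the forms in which Grant--Rafi--Verberne state (a) and (b) align with the hypotheses of this corollary --- in particular that boundedly generated $\Map(S)$ is Polish and locally bounded, so that their coarsely bounded conclusion in the "neither" case is the one we need.
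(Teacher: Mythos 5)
Your proposal is correct and matches the intended argument: the corollary follows by case analysis from \cref{thm:asdim} (applied with $G = \Map(S)$, which is Polish, contains $\PMap_c(S)$, and is boundedly generated by hypothesis) together with \cref{thm:shift} and \cref{thm:class} from Grant--Rafi--Verberne, with the final step that a coarsely bounded Polish group has a bounded metrizing pseudometric and hence $\asdim = 0$. The paper leaves this corollary without an explicit proof precisely because it is this straightforward bookkeeping.
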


To our knowledge, our construction obtains the first examples of an
arc and curve model
admitting a geometric (\v Svarc--Milnor-type) action of the
mapping class group of an arbitrary infinite-type surface; see
\cite{anschel} for a construction of curve graphs for translateable
surfaces. 

\subsection{Outline}\label{sec:outline}

In \cref{sec:prelim}, we state some known results about the
asymptotic dimension of mapping class groups of infinite-type
surfaces, specifically from \cite{shift}, following which we
introduce some background and relevant tools for the coarse
geometry of Polish groups from \cite{rosendal} and \cite{bdhl}. 
In \cref{sec:ccpt}, we describe witness-cocompactness, a key tool for
computing asymptotic dimension, and sketch the main theorem in
\cite{ga_asdim}: 

\begin{theorem}\label{thm:ccpt_asdim} Let $S$ be an
  infinite-type surface and let 
  $\mathcal{M}$ be a witness-cocompact arc and curve model for
  $\PMap_c(S)$.
  Then $\asdim \mathcal{M} = \infty$. 
\end{theorem}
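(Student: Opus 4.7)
The strategy is to show $\asdim \M \geq n$ for every $n \in \N$, which forces $\asdim \M = \infty$. Since asymptotic dimension is monotone under coarse (in particular, quasi-isometric) embeddings, it suffices to exhibit, for each $n$, a quasi-isometric embedding $\Z^n \hookrightarrow \M$.

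A natural candidate is produced by commuting partial pseudo-Anosovs supported on disjoint finite-type subsurfaces. Because $S$ has infinite topological type, one can find $n$ pairwise disjoint compact subsurfaces $\Sigma_1,\ldots,\Sigma_n \subset S$, each of complex enough topology to support a pseudo-Anosov in its pure mapping class group. Fix such a pseudo-Anosov $\phi_i \in \PMap_c(\Sigma_i)$, extended by the identity outside $\Sigma_i$. The $\phi_i$ pairwise commute (disjoint supports) and generate a free abelian subgroup $F = \langle \phi_1,\ldots,\phi_n\rangle \cong \Z^n$ of $\PMap_c(S)$. Choose a base vertex $v_0 \in V(\M)$; since $\PMap_c(S)$ acts isometrically on $\M$, the orbit map $\alpha : F \to V(\M)$, $g \mapsto g v_0$, is automatically Lipschitz, so the content is verifying it is coarsely bi-Lipschitz.

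This is where witness-cocompactness enters. One expects the hypothesis to provide coarsely Lipschitz projections $\pi_i : \M \to \cc(\Sigma_i)$ to the curve graph of each $\Sigma_i$: intuitively, every vertex of $\M$ has a uniformly bounded-complexity witness on $\Sigma_i$, and edges of $\M$ move this witness by a bounded amount. Granting these projections, two standard ingredients combine to give the lower bound: by Masur--Minsky, each $\phi_i$ acts on $\cc(\Sigma_i)$ with positive asymptotic translation length, while $\phi_j$ for $j \neq i$ acts on $\pi_i(\M)$ with uniformly bounded displacement (being supported in $\Sigma_j$, disjoint from $\Sigma_i$). Summing over $i$ yields the linear lower bound
\[
  \sum_i |a_i| \;\lesssim\; \sum_i d_{\cc(\Sigma_i)}\bigl(\pi_i(v_0),\,\pi_i(\phi_1^{a_1}\cdots \phi_n^{a_n} v_0)\bigr) \;\lesssim\; d_\M\bigl(v_0,\,\phi_1^{a_1}\cdots \phi_n^{a_n} v_0\bigr),
\]
so $\alpha$ is a quasi-isometric embedding.

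The main obstacle is constructing the projections $\pi_i$ and verifying their coarse Lipschitz property from witness-cocompactness alone. One must show that the cocompactness of witnesses over $\Sigma_i$ provides enough rigidity to make a subsurface-style projection of each $v \in V(\M)$ well-defined up to uniform bounded error, and that the elementary moves realizing edges of $\M$ produce only bounded changes on each $\cc(\Sigma_i)$. Once these projections and their equivariance properties are established, the Behrstock/Masur--Minsky machinery applied on each $\Sigma_i$ handles the rest of the estimate routinely.
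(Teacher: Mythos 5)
Your argument has a genuine gap, and the issue is precisely the point where you acknowledge uncertainty: constructing coarsely Lipschitz projections $\pi_i : \M \to \cc(\Sigma_i)$ to $n$ arbitrary disjoint subsurfaces. Witness-cocompactness does \emph{not} provide such projections for arbitrary compact subsurfaces $\Sigma_i$; it only controls the geometry over \emph{witnesses}, i.e., subsurfaces $W$ such that every vertex of $\M$ intersects every component of $W$. If $\Sigma_i$ is not a witness, a vertex $v \in V(\M)$ may be entirely disjoint from $\Sigma_i$, so $\pi_i(v)$ is simply undefined, and there is no reason for the intersection pattern to be uniformly bounded. Your parenthetical "every vertex of $\M$ has a uniformly bounded-complexity witness on $\Sigma_i$" is false for non-witness $\Sigma_i$, and nothing in the hypotheses forces $n$ pairwise disjoint witnesses to exist for every $n$.

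This leads to a dichotomy you have not noticed. Let $w_\M$ denote the supremum of the number of pairwise disjoint connected witnesses. If $w_\M = \infty$, your $\Z^n$-flat argument can be repaired by choosing the $\Sigma_i$ to be $n$ disjoint connected witnesses, projecting via $\rho_{\Sigma_i}$ to cocompact models $\M_{\Sigma_i}$ (\cref{sec:witness_gm}), and then composing with the canonical map to $\cc(\Sigma_i)$; this essentially recovers the paper's HHS-rank lower bound (\cref{rmk:flats}). But if $w_\M < \infty$, your approach cannot work at all: you cannot find $n > w_\M$ disjoint subsurfaces carrying projections, and in this case the relevant finite-type models $\M_{\Sigma_d}$ are $\delta$-hyperbolic, so $\Z^n$ does not quasi-isometrically embed in them for $n \geq 2$. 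The paper handles this case via a fundamentally different mechanism: it produces, for each $d$, a sufficiently large connected witness $\Sigma_d$ for which $\M_{\Sigma_d}$ is hyperbolic, and then bounds $\asdim \M_{\Sigma_d}$ from below by the topological dimension of a compactum in $\partial \M_{\Sigma_d}$ (\cref{prop:bdry_cpt}), using Dowdall--Taylor alignment-preserving maps (\cref{thm:dtboundaries}), Rafi--Schleimer (\cref{prop:elSembed}), and Gabai's identification of $\partial \cc \Delta$ with a N\"obeling space (\cref{thm:gabai}). That entire branch of the argument is missing from your proposal, and no amount of repairing the projection step will recover it: you need an asymptotic-dimension lower bound that does not come from flats.
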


\noindent
In \cref{sec:ccar}, we introduce \textit{coarse
Cayley--Abels--Rosendal graphs}, which extend the
Cayley--Abels--Rosendal graphs of \cite{bdhl} for 
locally bounded Polish groups; in particular, we prove a \v
Svarc--Milnor-type result (\cref{prop:ccar}).
\cref{sec:model_general} constructs the model $\M$ satisfying \cref{thm:model}.
Finally, \cref{sec:nondisp} proves \cref{thm:asdim}.

\subsection{Acknowledgements}\label{sec:acknowledge}

The authors would like to thank Thomas Hill for acquainting them with
Cayley--Abels--Rosendal graphs and Robbie Lyman, George Domat and Brian Udall for
helpful discussions. Additionally, the authors thank Mladen Bestvina and Priyam Patel for their suggestion to  generalize the work to the
locally bounded case.  This work was supported by NSF grant no.\
2304774 and no.\ 1840190: \textit{RTG: Algebra, Geometry, and Topology at
the University of Utah}.

\section{Preliminaries}\label{sec:prelim}

We first review Rosendal's work on Polish topological groups and
introduce the necessary background on coarse structures from
\cite{rosendal}. We then recall Cayley--Abels--Rosendal graphs for
topological groups {\cite{bdhl}} and several facts on the topology
of boundedly generated mapping class groups. Lastly, we summarize
the relevant results from \cite{shift}.

\subsection{Coarse structure}\label{sec:coarse}

The Polish groups considered herein are typically not finitely or
compactly generated.  Nonetheless, following Rosendal
 we may associate to every topological group $G$ 
a canonical left-invariant \textit{coarse structure},
which generalizes the (quasi)geometric structure classically
associated to a group.  This coarse structure will permit a
well-defined
 coarse equivalence and quasi-isometry
type for \textit{locally bounded} and \textit{boundedly generated}
Polish groups, respectively (\cref{sec:cbgen}).  

%We introduce the left-coarse structure associated to a topological 
%group and discuss when actions of a topological group on a 
%metric space yield orbit maps that are coarse equivalences. This
%tells us when a topological group has a well defined coarse
%equivalence type and when this is reflected by the space it acts
%``appropriately" on. It also helps us compute coarse equivalence
%invariants of a group (for example, asymptotic dimension) using this
%space.

\begin{definition}[{\cite[Defn.~2.2]{rosendal}}]\label{def:coarse_str}
    A \textit{coarse structure} on a set $X$ is a collection $\mathcal{E}$ of
    subsets $E\subseteq X\times X$ satisfying the following:

    \begin{itemize}
        \item The diagonal $\{(x,x)\mid x\in X\}$ is in $\mathcal{E}$.

        \item If $F\in\mathcal{E}$ and $E\subseteq F$, then $E\in\mathcal{E}$

        \item if $E,F\in\mathcal{E}$, then $E\cup F,E^{-1},E\circ
      F\in\mathcal{E}$, where $E^{-1}=\{(y,x)\mid(x,y)\in E\}$ and
      $E\circ F:=\{(x,z)\mid\exists y\in X, (x,y)
      \in F,(y,z)\in E\}$.
  \end{itemize}
\end{definition}

\begin{example}[{\cite[Example ~2.3]{rosendal}}]\label{ex:cs_pseudometric}
    The simplest examples of coarse structures arise from
    pseudometrics on a set $X$: Given a pseudometric $d$ on $X$, we
    may define the coarse structure induced by $d$ as follows:
   \[ 
    \mathcal{E}_d:=\{E\subset X\times X\mid E\subseteq E_\alpha\text{ for some }\alpha<\infty\}
  \] 
    where $E_\alpha:=\{(x,y)\mid d(x,y)<\alpha\}$.
\end{example}

\begin{definition}
[{\cite[Defn.~2.12]{rosendal}}]\label{def:cb_set} A subset
$A\subseteq X$ of a coarse space $(X,\mathcal{E})$ is said to be
\textit{coarsely bounded} if $A\times A\in\mathcal{E}$. 
\end{definition}

A topological group has a canonical left-invariant coarse structure:

\begin{definition}[{\cite[Defn.~2.10]{rosendal}}]\label{def:left_coarse_str}
    For a topological group $G$, the \textit{left-coarse structure}
    $\mathcal{E}_L$ is defined by 
    \[ 
    \mathcal{E}_L:=\bigcap\{\mathcal{E}_d\mid d\text{ is a
    continuous, left-invariant pseudometric on G}\}\:.
  \] 
\end{definition}
\noindent
For a topological group $G$ with its left-coarse structure
$\mathcal{E}_L$, we denote by $\mathcal{CB}$ the collection of all
coarsely bounded subsets of $G$.
This collection
%, $\mathcal{CB}$, 
is actually an ideal of sets additionally 
closed under the operations of topological closure, inversion and products.

\begin{definition}[{\cite[Defn.~2.10]{rosendal}}]\label{def:cs_ideal}
    Given an ideal $\A$, we can define the coarse structure
    $\mathcal{E_A}$ on the group $G$ as follows:
    \[
    \mathcal{E_A}:=\{E\mid E\subseteq E_A\text{ for some }A\in\mathcal{A}\}
  \] 
    where $E_A:=\{(x,y)\in G\times G\mid x^{-1}y\in A\}$.
\end{definition}

In particular, 
we can consider the coarse structure $\mathcal{E_{CB}}$ on $G$, associated to the ideal $\mathcal{CB}$ of coarsely bounded subsets of $G$. 

\begin{lemma}[{\cite[Cor.~2.23]{rosendal}}]\label{lem:Eob}
  For any topological group $G$, $\mathcal{E}_L = \mathcal{E}_{\mathcal{CB}}$.
\end{lemma}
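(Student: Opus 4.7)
The plan is to verify both inclusions of $\mathcal{E}_L = \mathcal{E}_{\mathcal{CB}}$ separately, exploiting left-invariance to translate between the binary distance $d(x,y)$ and the unary quantity $d(e, x^{-1}y)$.

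For the inclusion $\mathcal{E}_{\mathcal{CB}} \subseteq \mathcal{E}_L$, I would take $E \in \mathcal{E}_{\mathcal{CB}}$ with $E \subseteq E_A$ for some coarsely bounded $A \subseteq G$ and show $E_A \in \mathcal{E}_d$ for every continuous left-invariant pseudometric $d$ on $G$. Since $A$ is coarsely bounded, $A \times A \in \mathcal{E}_L \subseteq \mathcal{E}_d$, so $A$ has finite $d$-diameter; fixing any $a_0 \in A$ and applying the triangle inequality bounds $d(e, a) \leq d(e, a_0) + \diam_d(A)$ uniformly for $a \in A$, and by left-invariance $d(x,y) = d(e, x^{-1}y)$ inherits the same bound whenever $(x,y) \in E_A$. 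Hence $E_A \in \mathcal{E}_d$, and since $d$ was arbitrary, both $E_A$ and $E$ lie in $\mathcal{E}_L$.

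For the reverse inclusion $\mathcal{E}_L \subseteq \mathcal{E}_{\mathcal{CB}}$, given $E \in \mathcal{E}_L$ I would set $A := \{x^{-1}y : (x,y) \in E\}$, so that $E \subseteq E_A$ tautologically; it then suffices to show $A \in \mathcal{CB}$. For any continuous left-invariant pseudometric $d$, the hypothesis $E \in \mathcal{E}_d$ yields some $\alpha < \infty$ with $d(x,y) < \alpha$ on $E$; left-invariance then gives $d(e, a) < \alpha$ for all $a \in A$, whence $d(a,b) < 2\alpha$ on $A$ by the triangle inequality. Thus $A \times A \in \mathcal{E}_d$ for every such $d$, i.e., $A \times A \in \mathcal{E}_L$, which is precisely the definition of coarse boundedness.

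The key conceptual ingredient is the left-invariance reduction above, which collapses both directions to the essentially trivial observation that having finite $d$-diameter is equivalent to lying within a bounded $d$-distance of the identity. The remaining work is bookkeeping — for example, confirming that $\mathcal{E}_{\mathcal{CB}}$ is in fact a coarse structure in the sense of \cref{def:coarse_str}, for which closure under composition requires that $\mathcal{CB}$ be closed under products (an ideal property noted immediately before the statement of the lemma). I do not anticipate any serious obstacle beyond these routine verifications.
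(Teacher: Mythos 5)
Your argument is correct. The paper itself offers no proof of this lemma, citing it directly from Rosendal; your derivation is a self-contained unwinding of \cref{def:cb_set}, \cref{def:left_coarse_str}, and \cref{def:cs_ideal} via the left-invariance identity $d(x,y) = d(e,x^{-1}y)$, and both inclusions go through as you describe. One small remark: the verification you flag at the end, that $\mathcal{E}_{\mathcal{CB}}$ satisfies the axioms of \cref{def:coarse_str}, is superfluous for the statement at hand --- once your two inclusions establish the set equality $\mathcal{E}_L = \mathcal{E}_{\mathcal{CB}}$, the coarse-structure axioms for $\mathcal{E}_{\mathcal{CB}}$ follow for free from those of $\mathcal{E}_L$ (itself a coarse structure, being an intersection of the $\mathcal{E}_d$). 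You may also wish to dispose separately of the degenerate case $A = \varnothing$ in the forward inclusion, where the choice of $a_0 \in A$ is vacuous; there $E_A = \varnothing$ lies in every $\mathcal{E}_d$ trivially, so nothing breaks.
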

\noindent
  Henceforth, we will always endow a topological group with
  the left coarse structure $\mathcal{E}_L =
\mathcal{E}_{\mathcal{CB}}$ and a pseudometric space $(X,d)$ with
the coarse structure $\mathcal{E}_d$.

\begin{definition}[{\cite[Defn.~2.43]{rosendal}}]\label{def:coarse_equivalence}
  Let $(X,\mathcal{E)}$ and $(Y,\mathcal{F})$ be coarse spaces.
  \begin{itemize} \item A map $\varphi:X\rightarrow Y$ is said to be
    \textit{bornologous} if
    $(\varphi\times\varphi)(\mathcal{E})\subseteq\mathcal{F}$

    \item A map $\varphi:X\rightarrow Y$ is said to be \textit{expanding} if
      $(\varphi\times\varphi)^{-1}(\mathcal{F})\subseteq\mathcal{E}$

    \item A map $\varphi:X\rightarrow Y$ is said to be a \textit{coarse
      embedding} if it is both bornologous and expanding.

    \item Let $Z$ be a set. Two maps $\alpha,\beta:Z\rightarrow X$
      are said to be \textit{close} if there exists $E\in\mathcal{E}$ such
      that $(\alpha(z),\beta(z))\in E$ for all $z\in Z$.

    \item A bornologous map $\varphi:X\rightarrow Y$ is said to be a
      \textit{coarse equivalence} if there exists a bornologous map
      $\psi:Y\rightarrow X$ such that  $\psi\circ\varphi$ is close
      to $\text{Id}_X$ and $\varphi\circ\psi$ is close to
      $\text{Id}_Y$.

    \item A subset $A\subseteq X$ is said to be \textit{cobounded} if there
      exists $E\in\mathcal{E}$ such that $$ X=E[A]:=\{x\in X\mid
      (x,y)\in E\text{ for some }y\in A\} $$

  \item A map $\varphi:X\rightarrow Y$ is \textit{cobounded} if $\varphi(X)$
is cobounded in Y. 
\end{itemize} 
\end{definition}

\begin{remark*}
  The terms in \cref{def:coarse_equivalence} agree with their
  usual (metric) definitions when $\mathcal{E}$ and $\mathcal{F}$ are
  \textit{metrizable} i.e. $\mathcal{E} = \mathcal{E}_d$
  and $\mathcal{F} = \mathcal{E}_{d'}$ for metrics $d,d'$ on
  $X,Y$ respectively. 
\end{remark*}

\begin{lemma}[{\cite[Lem. 2.45]{rosendal}}]\label{lem:ce_char}
  Any cobounded coarse embedding is a coarse equivalence.
\end{lemma}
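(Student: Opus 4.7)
The plan is to build a coarse inverse $\psi\colon Y\to X$ by a choice function that lifts points of $Y$ back to $X$ through $\varphi$, exploiting coboundedness to guarantee that such lifts exist within a controlled entourage and using the expanding property of $\varphi$ to transfer control back from $\mathcal{F}$ to $\mathcal{E}$.

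First, I would use coboundedness of $\varphi(X)$ to fix an entourage $F_0\in\mathcal{F}$ with $Y=F_0[\varphi(X)]$. For each $y\in Y$, I would then choose $\psi(y)\in X$ such that $(y,\varphi(\psi(y)))\in F_0$; when $y=\varphi(x)$ for some $x$, I could take $\psi(y)=x$, though this is not strictly necessary. With $\psi$ defined, closeness of $\varphi\circ\psi$ to $\mathrm{Id}_Y$ is immediate from the defining condition, since $\{(y,\varphi\psi(y))\mid y\in Y\}\subseteq F_0\in\mathcal{F}$.

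Next, to verify that $\psi\circ\varphi$ is close to $\mathrm{Id}_X$, I would fix $x\in X$, observe $(\varphi(x),\varphi(\psi\varphi(x)))\in F_0$, and then apply the expanding hypothesis: since $(\varphi\times\varphi)^{-1}(F_0\cup F_0^{-1})\in\mathcal{E}$, we conclude $(x,\psi\varphi(x))$ lies in a fixed element of $\mathcal{E}$ independent of $x$. Similarly, to check that $\psi$ is bornologous, given $F\in\mathcal{F}$ and $(y_1,y_2)\in F$, I would chain
\[
(\varphi\psi(y_1),y_1)\in F_0^{-1},\quad (y_1,y_2)\in F,\quad (y_2,\varphi\psi(y_2))\in F_0,
\]
to obtain $(\varphi\psi(y_1),\varphi\psi(y_2))\in F_0^{-1}\circ F\circ F_0\in\mathcal{F}$, and then apply the expanding property once more to pull this back into $\mathcal{E}$.

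The main conceptual point, and the only place some care is required, is that the expanding hypothesis must be invoked twice: once to control the difference $(x,\psi\varphi(x))$ via a single entourage $F_0$, and again to propagate an arbitrary entourage $F\in\mathcal{F}$ through the chain above. Aside from that, the argument is formal and uses only the axioms of a coarse structure (closure under inverses, unions, and compositions); no metrizability or separation hypothesis on $(X,\mathcal{E})$ or $(Y,\mathcal{F})$ is needed, and the only non-constructive ingredient is the choice of $\psi(y)$ for each $y\in Y$.
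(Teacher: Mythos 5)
Your proof is correct. The paper does not supply its own argument for this lemma --- it simply cites \cite[Lem.~2.45]{rosendal} --- and your construction of the coarse inverse $\psi$ by a choice function, together with the two applications of the expanding hypothesis (once to show $\psi\circ\varphi$ is close to $\Id_X$, once to show $\psi$ is bornologous), is the standard argument one finds in Rosendal.
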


\begin{lemma}\label{lem:cbaction}
  Suppose that a topological group $G$ acts continuously and isometrically 
  on a metric space $(X,d)$. Then the orbit map $\omega$ is
  bornologous.
\end{lemma}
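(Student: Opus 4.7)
The plan is to show that pullback of the metric $d$ along the orbit map yields a continuous, left-invariant pseudometric on $G$, which must therefore dominate the left coarse structure.

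Fix a basepoint $x_0 \in X$ and let $\omega \colon G \to X$ be the orbit map $g \mapsto g \cdot x_0$. I would define $d' \colon G \times G \to \R_{\geq 0}$ by $d'(g,h) \dfn d(g \cdot x_0, h \cdot x_0)$ and first verify that $d'$ is a continuous left-invariant pseudometric. Left-invariance is immediate from the fact that $G$ acts isometrically: $d'(kg, kh) = d(kg \cdot x_0, kh \cdot x_0) = d(g \cdot x_0, h \cdot x_0) = d'(g,h)$. Continuity of $d'$ follows because continuity of the action implies continuity of $\omega$, and $d$ is continuous on $X \times X$.

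Next, I would use \cref{def:left_coarse_str} and \cref{lem:Eob} to conclude that $\mathcal{E}_L \subseteq \mathcal{E}_{d'}$. Thus any $E \in \mathcal{E}_L$ is contained in $\{(g,h) : d'(g,h) < \alpha\}$ for some finite $\alpha$, which means $(\omega \times \omega)(E) \subseteq \{(x,y) \in X \times X : d(x,y) < \alpha\} \in \mathcal{E}_d$. Hence $(\omega \times \omega)(\mathcal{E}_L) \subseteq \mathcal{E}_d$, which is exactly the statement that $\omega$ is bornologous.

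No significant obstacle is expected here; the argument is essentially the observation that the defining family of pseudometrics for $\mathcal{E}_L$ is built precisely to make orbit maps into isometric $G$-spaces bornologous. The only point requiring small care is confirming continuity of $d'$, which rests on continuity (not merely separate continuity) of the action evaluated at the fixed basepoint $x_0$.
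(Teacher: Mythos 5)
Your proposal is correct and is essentially the same as the paper's proof: both pull back $d$ along the orbit map to obtain a continuous left-invariant pseudometric $\omega^* d = d'$, then invoke $\mathcal{E}_L \subseteq \mathcal{E}_{\omega^* d}$ from the definition of the left coarse structure. Your version simply spells out the left-invariance and continuity checks in more detail.
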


%\begin{proof}
  %We show that $(\omega \times \omega)[\mathcal{E}_{\mathcal{OB}}]
  %\subset \mathcal{E}_d$.  Suppose that $E \in
  %\mathcal{E}_{\mathcal{OB}}$, hence there is some coarsely bounded
  %$A \subset G$ for which $E \subset E_A = \{(g,h) : g^{-1}h \in
  %A\}$.  Then $(\omega \times \omega)(E) \subset E_\alpha$ for some
  %$\alpha > 0$, since $d(\omega g, \omega h) = d(gx,hx) =
  %d(1,g^{-1}hx)$ and $Ax$ is $d$-bounded in by course boundedness.
  %In particular, $(\omega \times \omega)(E) \in \mathcal{E}_d$.
%\end{proof}

\begin{proof}
  Since the action is continuous and isometric, the pullback metric
  $\omega^* d$ is a left-invariant continuous pseudometric on $G$
  and $(\omega \times \omega)[\mathcal{E}_{\omega^*d}] \subset
  \mathcal{E}_{d}$.  By definition $\mathcal{E}_L \subset
  \mathcal{E}_{\omega^*d}$, hence $\omega$ is bornologous.
\end{proof}

  Let $B_\alpha(x)$ denote the ball of radius $\alpha > 0$ centered
  at $x$. %\Michael{Reorganized gently.  Also removed the only if
    %part of the statement: we don't explicitly prove this, and we don't
  %need it afaik; we can add it back if you'd like!}

\begin{lemma}\label{lem:expanding_orbit_map}
    Suppose that a topological group $G$
    acts continuously and isometrically on a metric space $(X,d)$
    and let $\omega$ be the orbit map based at $x_0 \in X$.
    Then $\omega$ is expanding if $A_\alpha \dfn \omega^{-1}(B_\alpha(x_0))$
    is coarsely bounded for all $\alpha > 0$. 
\end{lemma}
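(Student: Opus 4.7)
The plan is to verify directly that $(\omega \times \omega)^{-1}(\mathcal{E}_d) \subseteq \mathcal{E}_L$. By \cref{lem:Eob}, it suffices to show $(\omega\times\omega)^{-1}(\mathcal{E}_d) \subseteq \mathcal{E}_{\mathcal{CB}}$, and since coarse structures are closed downward under inclusion, it is enough to handle the generating entourages $E_\alpha = \{(x,y) \in X \times X : d(x,y) < \alpha\}$ for $\alpha > 0$.

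First I would compute the preimage: by definition,
\[ (\omega\times\omega)^{-1}(E_\alpha) = \{(g,h) \in G \times G : d(g x_0, h x_0) < \alpha\}. \]
Since $G$ acts isometrically, $d(g x_0, h x_0) = d(x_0, g^{-1} h \cdot x_0)$, so
\[ (g,h) \in (\omega\times\omega)^{-1}(E_\alpha) \iff g^{-1} h \in \omega^{-1}(B_\alpha(x_0)) = A_\alpha. \]
In the notation of \cref{def:cs_ideal}, this says $(\omega\times\omega)^{-1}(E_\alpha) = E_{A_\alpha}$.

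By hypothesis $A_\alpha \in \mathcal{CB}$, so $E_{A_\alpha} \in \mathcal{E}_{\mathcal{CB}}$. Therefore, given any $E \in \mathcal{E}_d$, choosing $\alpha$ with $E \subseteq E_\alpha$ yields $(\omega\times\omega)^{-1}(E) \subseteq E_{A_\alpha} \in \mathcal{E}_{\mathcal{CB}} = \mathcal{E}_L$, which establishes that $\omega$ is expanding. There is no real obstacle here; the only content is the identification of the preimage entourage with $E_{A_\alpha}$, which depends crucially on the isometry of the action to convert a condition about $(g x_0, h x_0)$ into a condition on $g^{-1} h$.
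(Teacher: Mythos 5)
Your proof is correct and follows essentially the same route as the paper's: identify $(\omega\times\omega)^{-1}(E_\alpha)$ with $E_{A_\alpha}$ via the isometry of the action, invoke $\mathcal{E}_L = \mathcal{E}_{\mathcal{CB}}$, and finish by downward closure of coarse structures. No meaningful differences.
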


\begin{proof}
%    \Michael{Maybe not
%  necessary since these are always assumed?  See after lem 2.6}
    The orbit map $\omega$ is expanding if and 
    only if $(\omega\times\omega)^{-1}(\mathcal{E}_d)\subseteq\mathcal{E}_L$. 
    First consider $E_\alpha\in\mathcal{E}_d$ for $\alpha > 0$. Then 
    \begin{align*}
        (\omega\times\omega)^{-1}(E_\alpha)=&\ \{(g,h)\in G\times
        G\mid (g x_0,h x_0)\in E_\alpha\}\\ =&\
        \{(g,h)\in G\times G\mid d(g x_0,h
        x_0))<\alpha\}\\
        =&\ \{(g,h)\in G\times G\mid d(x_0,g^{-1}h x_0))<\alpha\}\\
        =&\ \{(g,h)\in G\times G\mid g^{-1}h\in A_\alpha\}\\
        =&\ E_{A_\alpha}
    \end{align*}

    Since $A_\alpha$ is coarsely bounded, $E_{A_\alpha} \in
    \mathcal{E}_{\mathcal{CB}} = \mathcal{E}_L$ by \cref{lem:Eob}.
    For general $E \in \mathcal{E}_d$, $E \subset E_\alpha$ for some
    $\alpha > 0$. Hence $(\omega \times \omega)^{-1}(E) \subset
    (\omega \times \omega)^{-1}(E_\alpha) \in \mathcal{E}_L$ and
    $(\omega \times \omega)^{-1}(E) \in \mathcal{E}_L$ as required.
  \end{proof}

%\Michael{This prop should be here, no?}
We conclude by stating a convenient criterion for coarse
boundedness: 

\begin{proposition}[{\cite[Prop.~2.15(5)]{rosendal}}]
  \label{prop:ros_crit}
  Let $G$ be a Polish group.  A subset $A \subset G$ is coarsely
  bounded if and only if for every identity neighborhood $U \subset
  G$, there exists a finite set $F$ and $n \in \N$ such that $A
  \subset (FU)^n$.
\end{proposition}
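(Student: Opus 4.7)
My plan is to treat each direction of the equivalence separately, invoking \cref{lem:Eob} so that coarse boundedness is captured equivalently either by the $\mathcal{CB}$-ideal or by finite diameter with respect to every continuous left-invariant pseudometric on $G$.

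The sufficient direction is immediate. Assuming that for every identity neighborhood $U$ there exist finite $F \subset G$ and $n \in \N$ with $A \subset (FU)^n$, I take an arbitrary continuous left-invariant pseudometric $d$ on $G$ and set $U = \{g : d(e,g) < 1\}$, which is open by continuity of $d$ at the identity. Choosing $F, n$ as in the hypothesis and setting $R = \max_{f \in F} d(e,f)$, expansion of $g = f_1 u_1 \cdots f_n u_n \in (FU)^n$ by the triangle inequality and left-invariance yields $d(e,g) \leq n(R + 1)$; hence $A$ has finite $d$-diameter, and $A \times A \in \mathcal{E}_d$. Since $d$ was arbitrary, $A$ is coarsely bounded.

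For the necessary direction I argue by contrapositive: assuming some symmetric open identity neighborhood $U$ satisfies $A \not\subset (FU)^n$ for every finite $F \subset G$ and $n \in \N$, I construct a continuous left-invariant pseudometric $d_U$ witnessing infinite $d_U$-diameter of $A$. Iterating the failure, I inductively pick $a_k \in A$ with $a_k \notin (F_{k-1} U)^k$, where $F_{k-1} = \{a_1, \dots, a_{k-1}\}$. The skeleton of $d_U$ arises from a Birkhoff--Kakutani construction: fix a descending chain $U = U_0 \supset U_1 \supset \cdots$ of symmetric open identity neighborhoods with $U_{n+1}^3 \subset U_n$, producing a continuous left-invariant pseudometric $\rho$ with $\rho(e,g) < 2^{-n}$ for $g \in U_n$. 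I then enlarge $\rho$ to $d_U$ by a Graev-style extension that assigns the prescribed lengths $d_U(e, a_k) = k$ to the sequence $(a_k)$ and propagates these values across $G$ by left-invariance and infimum over factorizations.

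The main obstacle is confirming that this enlargement is genuine, i.e., that the prescribed lower bounds $d_U(e, a_k) \geq k$ survive the extension procedure rather than being collapsed by shorter factorizations in the extended generating system $F_{k-1} \cup U$. This is precisely where the failure hypothesis $a_k \notin (F_{k-1} U)^k$ plays its role: any purported $d_U$-factorization of $a_k$ of total length strictly less than $k$, after absorbing $U$-contributions (each of $\rho$-diameter at most $1$) into adjacent factors drawn from $F_{k-1}$, would express $a_k$ as an alternating product of fewer than $k$ elements of $F_{k-1} \cdot U$, contradicting the choice of $a_k$. Once the lower bound is verified, the sequence $(a_k) \subset A$ exhibits infinite $d_U$-diameter, contradicting coarse boundedness of $A$ and completing the proof.
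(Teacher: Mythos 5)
Your sufficient direction is essentially the argument in \cite[Prop.~2.15]{rosendal}: take an arbitrary continuous left-invariant pseudometric, note that $F$ and $U$ both have bounded diameter, and conclude $(FU)^n$ does also. That part is fine.

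The necessary direction has a genuine gap. You set up the contrapositive correctly and choose $a_k \notin (F_{k-1}U)^k$ by induction, but the Birkhoff--Kakutani-plus-Graev construction you sketch does not obviously produce what you need, and your verification step is circular. Concretely: the Birkhoff--Kakutani pseudometric $\rho$ built from $U = U_0 \supset U_1 \supset \cdots$ is \emph{bounded} (by $1$, say), so it alone cannot separate the $a_k$ at scale $k$. You then propose a ``Graev-style extension'' assigning $d_U(e,a_k) = k$ and closing up under left-invariance and infimum over factorizations, but a Graev extension is a construction for extending a metric on a \emph{set} to a left-invariant metric on the free group it generates, not for grafting prescribed values onto an ambient topological group while preserving continuity. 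The correct construction (which is what Rosendal actually does in the proof of Prop.~2.15) is to define $d_U$ directly as a word-length-type pseudometric: $d_U(g,h)$ is the infimum of $\sum_i w_i$ over all factorizations $g^{-1}h = s_1 \cdots s_m$ with each $s_i \in U$ or $s_i \in \{a_k^{\pm 1}\}$, where $U$-letters contribute weight $1$ and $a_k^{\pm 1}$-letters contribute weight $k$ (or one passes to the max with the Birkhoff--Kakutani $\rho$ to ensure the pseudometric is continuous and non-trivial on small scales). With that definition, the lower bound $d_U(e, a_k) \geq $ (something growing in $k$) is a combinatorial word-length argument, not a consequence of the Graev construction.

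Moreover, your verification paragraph asserts that any short $d_U$-factorization of $a_k$, ``after absorbing $U$-contributions into adjacent $F_{k-1}$-factors,'' yields $a_k \in (F_{k-1}U)^{<k}$. This absorption step is exactly the claim that needs proving and is not automatic: in a general factorization the $a_j^{\pm 1}$-letters need not alternate with $U$-letters, inverses $a_j^{-1}$ appear (and $F_{k-1}$ is not assumed symmetric), and the infimum in the pseudometric ranges over \emph{all} factorizations, not just those of the shape $f_1 u_1 \cdots f_m u_m$. One must either symmetrize $F$, carefully track weights, or (as Rosendal does) set up the pseudometric so that the word-length lower bound is immediate from the definition. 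As written, your argument assumes the conclusion of the bookkeeping it is supposed to carry out.

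In short: the overall strategy (contrapositive, inductive choice of $a_k$, build a continuous left-invariant pseudometric on which $A$ has infinite diameter) is the right one and matches Rosendal's, but the pseudometric construction is misidentified and the crucial lower-bound verification is a restatement of the goal rather than a proof. For the paper's purposes this proposition is cited from \cite{rosendal} and not reproved, so the paper contains no proof to compare against; but a self-contained proof would need the word-metric construction made precise.
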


\begin{corollary}\label{lem:fin_ext_cb}
    Let $G$ be a Polish group and $H\leq G$ be coarsely bounded in
    $G$. If $H\leq H'\leq G$ such that $[H:H']<\infty$ then $H'$ is
    also coarsely bounded in $G$.
\end{corollary}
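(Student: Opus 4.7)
Given $H \leq H'$ with $H$ of finite index in $H'$ (the natural reading of the hypothesis), the plan is to apply Rosendal's criterion \cref{prop:ros_crit}. First, I would fix coset representatives $g_1, \ldots, g_k \in H'$ so that $H' = \bigcup_{i=1}^k g_i H$. To verify the criterion, take any identity neighborhood $U \subset G$; without loss of generality, $1 \in U$. Since $H$ is coarsely bounded, \cref{prop:ros_crit} supplies a finite $F \subset G$ and $n \in \N$ with $H \subset (FU)^n$. Setting $F' \dfn F \cup \{g_1, \ldots, g_k\}$ and using both $F' \subset F'U$ and $FU \subset F'U$, one obtains
\[
H' = \bigcup_{i=1}^k g_i H \;\subset\; F' \cdot (FU)^n \;\subset\; (F'U)^{n+1},
\]
and \cref{prop:ros_crit} then yields coarse boundedness of $H'$ in $G$.

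An equivalent and slightly cleaner route bypasses \cref{prop:ros_crit} and works directly with the ideal structure of $\mathcal{CB}$: as noted earlier in \cref{sec:coarse}, $\mathcal{CB}$ is closed under finite products, so each coset $g_i H = \{g_i\} \cdot H$ is coarsely bounded as a product of coarsely bounded sets; the finite union $H' = \bigcup_{i=1}^k g_i H$ is then coarsely bounded by the ideal property. Either way, the result is a formal consequence of the basic structure developed in \cref{sec:coarse}, and there is no real obstacle: the only subtlety is recognizing that the finite-index hypothesis translates immediately into a decomposition of $H'$ as a finite union of translates of $H$, after which both arguments are essentially automatic.
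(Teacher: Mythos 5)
Your first argument is essentially the paper's proof: both invoke \cref{prop:ros_crit}, decompose $H' = \bigcup_i h_i H$, and take $F' = F \cup \{h_1,\ldots,h_k\}$. You are in fact slightly more careful than the paper: the paper asserts $H' \subset (F'U)^n$, whereas your bound $H' \subset F'(FU)^n \subset (F'U)^{n+1}$ is the one that follows cleanly from $h_i \in F'U$ and $FU \subset F'U$ (the criterion permits any exponent, so the discrepancy is harmless but your version is the tighter justification). Your alternative argument via the ideal structure of $\mathcal{CB}$ --- each translate $\{g_i\}\cdot H$ is coarsely bounded as a product, and $\mathcal{CB}$ is closed under finite unions --- is a valid and arguably cleaner route, directly using the property stated just after \cref{def:left_coarse_str}; the paper does not take this path, but it is a legitimate formal consequence of the same background material and avoids re-invoking the $(FU)^n$ criterion explicitly.
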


\begin{proof}
    Since $H$ is coarsely bounded, for every open neighborhood $1\in
  U\subset G$, there exists a finite set $F$ and $n\in\N$ such that
$H\subset(FU)^n$. If $H'=\bigcup_{i=1}^k h_iH$, let
$F':=F\cup\{h_1,\ldots h_k\}$. Clearly $H'\subset(F'U)^n$ and hence
$H'$ is coarsely bounded in $G$. 
\end{proof}

\subsection{Local boundedness and bounded generation}\label{sec:cbgen}

Analogously to locally compact and compactly generated groups, we
introduce two classes of topological groups related to
the metrizability of $\mathcal{E}_L$.

\begin{definition} A topological group $G$ is
  \begin{enumerate}[label=(\roman*)]
      \item \textit{locally bounded} if there is a coarsely bounded
        neighborhood of identity.
    \item \textit{boundedly generated} if 
    it admits a coarsely bounded generating set.
    \end{enumerate}
\end{definition}
\begin{proposition}[{\cite[Thm.~2.40]{rosendal}}]\label{prop:gentoloc}
  Any boundedly generated Polish group is locally bounded.
\end{proposition}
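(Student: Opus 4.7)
The plan is to apply the Baire category theorem to an exhaustion of $G$ by iterated products of a closed, symmetric, coarsely bounded generating set, exploiting that the ideal $\mathcal{CB}$ is closed under topological closure, inversion, and finite products. First, I would fix a coarsely bounded generating set $S \subseteq G$, supplied by bounded generation, and replace it with $T \dfn \overline{S \cup S^{-1} \cup \{1\}}$; the closure properties of $\mathcal{CB}$ noted just after \cref{def:cb_set} ensure that $T$ remains a coarsely bounded generating set, and in addition $T$ is closed, symmetric, and contains the identity.

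Next, I would observe that $G = \bigcup_{n \geq 1} T^n$. Since $G$ is Polish and therefore Baire, some $T^n$ fails to be meager. I then claim that $\overline{T^n}$ has nonempty interior: otherwise $\overline{T^n}$ would be nowhere dense and $T^n \subseteq \overline{T^n}$ would itself be meager, a contradiction. Choosing an open set $U \subseteq \overline{T^n}$, the product $UU^{-1}$ is an open neighborhood of the identity contained in $\overline{T^n} \cdot (\overline{T^n})^{-1} = \overline{T^n} \cdot \overline{T^n}$, where the equality uses symmetry of $T$ together with the fact that inversion is a homeomorphism. Because $\mathcal{CB}$ is closed under closures and products, $\overline{T^n} \cdot \overline{T^n}$ is coarsely bounded, so the identity neighborhood $UU^{-1}$ is coarsely bounded, witnessing local boundedness of $G$.

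The main obstacle I anticipate is that the iterates $T^n$ need not be closed — multiplication $T \times \cdots \times T \to G$ is continuous but not in general a closed map — so one cannot directly extract a nonempty interior for $T^n$ itself from Baire category. The remedy above is to pass to the closure $\overline{T^n}$, which stays coarsely bounded by the closure-under-closure property of $\mathcal{CB}$; non-meagerness of $T^n$ then upgrades to nonempty interior of $\overline{T^n}$. Alternatively one could invoke Pettis's theorem, using that $T^n$ is analytic and therefore has the Baire property, to conclude directly that $T^n (T^n)^{-1}$ contains an identity neighborhood, but the closure argument is more self-contained and uses only the ideal structure of $\mathcal{CB}$ already recorded in the paper.
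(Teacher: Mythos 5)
Your proof is correct. The paper cites this result to Rosendal (\cite[Thm.~2.40]{rosendal}) and does not supply its own argument; yours is the standard Baire-category proof and matches Rosendal's approach: pass to a closed, symmetric, coarsely bounded generating set $T$ containing the identity, use that $G = \bigcup_n T^n$ is Baire to find $n$ with $\overline{T^n}$ of nonempty interior, and invoke the closure of $\mathcal{CB}$ under finite products, inversion, and topological closure to see that $UU^{-1} \subseteq \overline{T^n}\cdot\overline{T^n}$ is a coarsely bounded identity neighborhood. Your closing remark correctly isolates the one subtlety---$T^n$ need not be closed, so Baire category does not directly give nonempty interior for $T^n$ itself---and both remedies you offer (passing to $\overline{T^n}$, or applying Pettis's theorem via the observation that $T^n$ is analytic and hence has the Baire property) are valid.
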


\begin{remark*}\label{rem:lb_non_inheritence}
    The properties of local boundedness 
    and bounded generation are not inherited by 
    Polish subgroups. For example, consider the ladder
    surface $S$. Then $\Map(S)$ is boundedly generated 
    and hence locally bounded as well {\cite{mannrafi}} 
    but $\PMap(S)$ is neither locally bounded nor 
    boundedly generated {\cite{hill}}.
\end{remark*}

\begin{proposition}[{\cite[Cor.~3.26]{rosendal}}]
    Among Polish groups, the properties of being locally bounded  
    and boundedly generated are both invariant under coarse equivalence. 
    Moreover, every coarse equivalence between boundedly 
    generated Polish groups is automatically a quasi-isometry.
\end{proposition}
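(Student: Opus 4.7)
The proof hinges on a metric characterization of local boundedness and bounded generation in terms of $\mathcal{E}_L$, after which coarse invariance becomes essentially formal.

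First I would establish that a Polish group $G$ is locally bounded if and only if $\mathcal{E}_L$ is \emph{metrizable} as a coarse structure (admits a countable cofinal family of entourages), equivalently, if and only if there exists a continuous left-invariant coarsely proper pseudometric $d$ on $G$ generating $\mathcal{E}_L$.  The forward direction is a Birkhoff--Kakutani-style construction from a coarsely bounded open symmetric identity neighborhood $U$: iteratively choose symmetric coarsely bounded open $U_n \subset U$ with $U_{n+1}^3 \subset U_n$, which are coarsely bounded because they are contained in $U$, and then apply the standard pseudometric construction.  Separability of $G$ is then used to replace the cofinal family $\{(FU)^n : F \subset G\text{ finite}, n \in \N\}$ from \cref{prop:ros_crit} by a countable cofinal subfamily by restricting $F$ to finite subsets of a fixed countable dense subset.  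The converse is easy: the open unit $d$-ball of such a pseudometric is an identity neighborhood that is $d$-bounded, hence coarsely bounded.

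Invariance of local boundedness is then immediate: metrizability of a coarse structure is preserved under coarse equivalence, since a bornologous map with bornologous coarse inverse carries a countable cofinal family of entourages to one in the target.  Combined with the characterization above, this gives the first conclusion.

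For bounded generation, the analogous characterization is: $G$ is boundedly generated if and only if $\mathcal{E}_L$ is generated by a single entourage $E_S$ with $S \in \mathcal{CB}$, if and only if $G$ admits a continuous left-invariant coarsely proper pseudometric quasi-isometric to the word metric on a coarsely bounded symmetric generating set.  Coarse invariance again follows by the same push-forward argument, now on a single generating entourage.  For the moreover, given a coarse equivalence $\varphi : G \to H$ between boundedly generated Polish groups with associated quasi-geodesic continuous left-invariant coarsely proper pseudometrics $d_G, d_H$, the bornologicity of $\varphi$ and its coarse inverse upgrade to Lipschitz-up-to-additive-constant estimates: any two points in $G$ are connected by a word in a coarsely bounded generating set, and each letter has $\varphi$-image of uniformly bounded $d_H$-diameter (again by bornologicity), so the $d_H$-distance between the images is controlled linearly by the number of letters, i.e.\ by $d_G$.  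A symmetric argument for the coarse inverse yields the quasi-isometry.

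The main obstacle is the Birkhoff--Kakutani step: producing a pseudometric that is simultaneously continuous, left-invariant, and coarsely proper using only a coarsely bounded identity neighborhood, together with the separability argument needed to reduce the uncountable cofinal family of \cref{prop:ros_crit} to a countable one.  Once this metric characterization is in place, both invariance statements and the QI upgrade reduce to routine manipulations of bornologous maps and word-length estimates.
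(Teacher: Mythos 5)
The paper does not prove this statement; it is cited verbatim from Rosendal's monograph (his Cor.~3.26), and the paper's subsequent Theorems~2.38 and Prop.~2.72 of Rosendal (quoted in \cref{sec:cbgen}) are exactly the ingredients your proposal rebuilds. Your overall architecture matches Rosendal's: characterize local boundedness as metrizability of $\mathcal{E}_L$ by a continuous left-invariant pseudometric, characterize bounded generation as monogenicity / existence of a maximal (large-scale geodesic) such pseudometric, observe both are coarse invariants, and then upgrade a coarse equivalence to a quasi-isometry via the usual chain argument for large-scale geodesic spaces. That last step and the push-forward of a countable cofinal family of entourages are both correct and standard.

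The one place your sketch is genuinely imprecise is the Birkhoff--Kakutani step. As described---a nested sequence of symmetric coarsely bounded open $U_n\subset U$ with $U_{n+1}^3\subset U_n$, then ``the standard pseudometric construction''---this produces a continuous left-invariant pseudometric that is \emph{bounded} (by the usual $[0,1]$-valued Birkhoff--Kakutani formula), so it induces the trivial bounded coarse structure rather than $\mathcal{E}_L$, unless $G$ is itself coarsely bounded. The decreasing $U_n$ control only the small scale (continuity at the identity). To also capture $\mathcal{E}_L$ you must interleave this with an \emph{increasing} countable cofinal chain $V_1\subset V_2\subset\cdots$ in $\mathcal{CB}$ (which you correctly extract from \cref{prop:ros_crit} and separability) and build a single pseudometric whose balls run through both families---small balls governed by $U_n$ for continuity, large balls governed by $V_n$ for coarse properness. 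You list both ingredients but present the first as if it alone produced a coarsely proper metric, which it does not; the combination is the nontrivial part of Rosendal's Thm.~2.38. Everything downstream (invariance of metrizability and monogenicity under coarse equivalence, and the word-length estimate for the moreover) is fine as written.
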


We recall that
$\mathcal{E}_L$ is metrizable when it is induced by
a  (possibly discontinuous) 
metric on $G$, in which case $\mathcal{E}_L$ defines a
 coarse-equivalence type for $G$ in the usual (metric) sense.  
 Crucially: 

\begin{theorem}[{\cite[Thm.~2.38]{rosendal}}]
Let $G$ be a Polish group.  Then $\mathcal{E}_L$ is metrizable if
and only if $G$ is locally bounded if and only if $\mathcal{E}_L$ is induced by a continuous left-invariant pseudometric $d$ on $G$.    
\end{theorem}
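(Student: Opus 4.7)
The plan is to prove the three-way equivalence by the cycle (3) $\Rightarrow$ (1) $\Rightarrow$ (2) $\Rightarrow$ (3). The first implication is essentially tautological in view of \cref{ex:cs_pseudometric}: a continuous left-invariant pseudometric $d$ inducing $\mathcal{E}_L$ already witnesses metrizability (one may quotient $G$ by the kernel of $d$ to obtain a genuine metric without changing the induced coarse structure).

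For (1) $\Rightarrow$ (2), I would argue by Baire category. Metrizability of $\mathcal{E}_L$ provides a countable cofinal sequence of entourages $\{E_n\}$, and translating this through $\mathcal{E}_L = \mathcal{E}_{\mathcal{CB}}$ (\cref{lem:Eob}) yields an increasing sequence $A_1 \subset A_2 \subset \cdots$ of coarsely bounded subsets with $E_n \subset E_{A_n}$. Since every singleton is coarsely bounded, $G = \bigcup_n A_n$. Polish hence Baire, so some $A_n$ is non-meager, whence its closure $\overline{A_n}$ has nonempty interior. The closure of a coarsely bounded set is again coarsely bounded (immediate from \cref{prop:ros_crit}), and for any interior point $g_0$ the left translate $g_0^{-1}\overline{A_n}$ is a coarsely bounded identity neighborhood, since $(g_0^{-1}A)^{-1}(g_0^{-1}A) = A^{-1}A$. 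This establishes local boundedness.

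The bulk of the work is (2) $\Rightarrow$ (3). Given a symmetric coarsely bounded identity neighborhood $U$, I would apply the Birkhoff--Kakutani construction to a descending sequence $U_0 = U \supset U_1 \supset \cdots$ of symmetric open identity neighborhoods with $U_{n+1}^3 \subset U_n$ (available because Polish groups are first-countable) to produce a continuous left-invariant pseudometric $d$ satisfying $B_{2^{-n}}(1) \subset U_n$ for every $n$; in particular $B_1(1) \subset U$. The inclusion $\mathcal{E}_L \subset \mathcal{E}_d$ is then automatic from the definition of $\mathcal{E}_L$ as an intersection. For the reverse $\mathcal{E}_d \subset \mathcal{E}_L$, it suffices to show each $d$-ball $B_R(1)$ is coarsely bounded; the Birkhoff estimates place $B_R(1)$ inside a finite product $U^{K(R)}$, which is coarsely bounded because $\mathcal{CB}$ is an ideal closed under products. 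The main obstacle is the Birkhoff--Kakutani step itself: one needs enough bookkeeping in the length-function construction to guarantee simultaneously continuity of $d$ (through the upper bound $B_{2^{-n}}(1) \subset U_n$) and containment of every $d$-ball in some finite power of $U$, so that the topological and coarse data remain compatible.
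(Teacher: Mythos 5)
The paper does not prove this statement; it is imported verbatim as \cite[Thm.~2.38]{rosendal}, so there is no in-paper proof to compare against. Judging your sketch on its own merits: the implications (3)~$\Rightarrow$~(1) and (1)~$\Rightarrow$~(2) are essentially sound (for the first, the cleaner move is to add a bounded metric to $d$ rather than quotient by its kernel, since metrizability asks for a metric on $G$ itself; the Baire category argument for the second is exactly right).

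The gap is in (2)~$\Rightarrow$~(3). The Birkhoff--Kakutani construction produces a pseudometric that is \emph{bounded} --- the length function satisfies $\ell(g)\le\rho(g)\le 1$ for all $g$ --- so $B_R(1)=G$ once $R\ge 1$, and $\mathcal{E}_d$ becomes the trivial coarse structure. This equals $\mathcal{E}_L$ only when $G$ is coarsely bounded. More fundamentally, the target you set yourself, ``containment of every $d$-ball in some finite power of $U$,'' is unattainable unless $G$ is boundedly generated: if $B_R(1)\subset U^{K(R)}$ for every $R$ and the balls exhaust $G$, then $G=\langle U\rangle$, which is precisely bounded generation, a strictly stronger hypothesis than local boundedness (cf.\ \cref{prop:gentoloc} and the remark following it). By \cref{prop:ros_crit}, coarsely bounded sets in a merely locally bounded group have the form $(FU)^n$ with $F$ finite but otherwise unconstrained, and the pseudometric must see those translates. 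What is needed is to superimpose on the Birkhoff--Kakutani datum (which controls continuity near the identity) an unbounded piece whose sublevel sets track a countable cofinal chain $A_1\subset A_2\subset\cdots$ of coarsely bounded sets with $A_nA_n\subset A_{n+1}$, rather than the powers of a single $U$. That is a genuinely different construction, not bookkeeping within Birkhoff--Kakutani.
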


  When $\mathcal{E}_L$ is metrizable, by definition, it is maximal among the set of
coarse structures on $G$ induced by continuous left-invariant
pseudometrics, with
respect to the partial ordering 
\[\mathcal{E}_{d'} >
\mathcal{E}_{d} \iff \mathcal{E}_{d'} \subset \mathcal{E}_d \iff
\Id : (G,d') \to (G,d) \text{ is bornologous}\:.\]  
In particular, $\mathcal{E}_L$ is the unique 
such coarse structure.
%We may consider the class of \textit{coarsely proper}
%pseudometrics on $G$, \textit{i.e.}\ left-invariant pseudometrics
%on $G$ that induce $\mathcal{E}_L$. Among this set  (which is
%non-empty when $G$ is locally bounded) 
Similarly, we may consider a (finer) partial ordering on the set of 
left-invariant
continuous pseudometrics on $G$: let $d \gg d'$ whenever $\Id :
(G,d) \to (G,d')$ is coarsely Lipschitz.  We observe that any 
maximal $d$ is unique up to quasi-isometry.

\begin{theorem}[{\cite[Prop.~2.72]{rosendal}}]\label{thm:max_gen}
 Let $G$ be a Polish group.  Then $G$ admits a continuous
 left-invariant pseudometric $d$ maximal
 with respect to $\ll$ if and only if $G$ is boundedly
 generated, if and only if $d$ is quasi-isometric to the word metric
 on $G$ with respect to a symmetric coarsely bounded generating set.
\end{theorem}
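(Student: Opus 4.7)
I plan to prove the cycle $(c) \Rightarrow (a) \Rightarrow (b) \Rightarrow (c)$, labeling the three enumerated statements $(a), (b), (c)$ in order; the implication $(c) \Rightarrow (b)$ is built into the statement of $(c)$. For $(c) \Rightarrow (a)$: suppose $d$ is quasi-isometric to $d_\Sigma$ for a symmetric coarsely bounded generating set $\Sigma$, and let $d'$ be any continuous left-invariant pseudometric on $G$. Since $\Sigma \times \Sigma \in \mathcal{E}_L \subseteq \mathcal{E}_{d'}$, the set $\Sigma$ is $d'$-bounded with $K := \sup_{s \in \Sigma} d'(1, s) < \infty$. For $g \in G$ with $d_\Sigma(1, g) = n$ and factorization $g = s_1 \cdots s_n$, left-invariance and the triangle inequality yield $d'(1, g) \leq n K$, so $\Id : (G, d_\Sigma) \to (G, d')$ is $K$-Lipschitz. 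Composing with the quasi-isometry from $d$ to $d_\Sigma$ gives $d \gg d'$, establishing $\ll$-maximality.

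For $(b) \Rightarrow (c)$, given a symmetric coarsely bounded generating set $\Sigma$, apply \cref{prop:gentoloc} to obtain a symmetric coarsely bounded open identity neighborhood $V$; then $\Sigma \cup V$ is still a symmetric coarsely bounded generating set, so WLOG $\Sigma$ is open. A Birkhoff--Kakutani-type construction now produces a continuous left-invariant pseudometric $d$ quasi-isometric to $d_\Sigma$: choose a decreasing sequence of symmetric open neighborhoods $V_0 \subseteq \Sigma$ with $V_{n+1}^3 \subseteq V_n$, and define $d(g, h)$ as the infimum of $\sum_i 2^{-n(i)}$ over chains $g = g_0, \ldots, g_k = h$ with $g_{i-1}^{-1} g_i \in V_{n(i)}$. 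Continuity, left-invariance, and the triangle inequality are standard, and comparison with the word metric $d_{V_0}$ at scales $\geq 1$ yields linear comparability with $d_\Sigma$.

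The main obstacle is $(a) \Rightarrow (b)$. Given a $\ll$-maximal $d$, the candidate coarsely bounded generating set is $B := B_1^d(1)$, which is open by continuity of $d$. Coarse boundedness is immediate: maximality gives $d' \leq L d + C$ for every continuous left-invariant $d'$, so each such $d'$ is bounded on $B$, whence $B \in \mathcal{CB}$ by \cref{lem:Eob}. The hard step is to show $B$ generates $G$ and that the word metric $d_B$ is quasi-isometric to $d$; equivalently, that $d$ is \emph{large-scale geodesic}, meaning there exist $K, K'$ such that any $g, h$ are joined by a chain $g = g_0, \ldots, g_n = h$ with $d(g_{i-1}, g_i) \leq K$ and $n \leq K' d(g, h) + K'$. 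I would argue by contradiction: from a failure of large-scale geodesicity one extracts witness pairs $(g_k, h_k)$, and from these constructs a continuous left-invariant pseudometric $\rho$ as a chain distance with a carefully chosen superlinear jump penalty, so that $\rho$ grows faster than $d$ on the witnesses. Ensuring that $\rho$ is finite-valued, continuous, and left-invariant while genuinely witnessing $\rho \not\ll d$ is the technical crux, since convex penalties vanishing at $0$ collapse under refinement, while strictly bounded-jump chain distances risk infinite values on pairs of cosets. Once this contradiction is obtained, $B$ generates $G$; applying $(c) \Rightarrow (a)$ to $d_B$ then shows $d_B$ is itself $\ll$-maximal, and two $\ll$-maximal pseudometrics coarsely Lipschitz dominate one another, giving $d \asymp d_B$ and completing $(a) \Rightarrow (c)$.
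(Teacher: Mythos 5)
This theorem is cited from Rosendal's monograph (Prop.~2.72) and is not proved in the paper, so there is no internal proof to compare against; I will assess the proposal on its own terms.

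Your implication $(c)\Rightarrow(a)$ is correct and essentially unimprovable: coarse boundedness of $\Sigma$ forces $\Sigma$ to be $d'$-bounded for every continuous left-invariant $d'$, and the word-metric telescoping gives the coarse Lipschitz domination. The $(b)\Rightarrow(c)$ step is directionally right (a Birkhoff--Kakutani-type construction does produce the maximal metric), though the lower bound comparing your chain pseudometric to $d_{V_0}$ needs the standard coalescing lemma --- that a sub-chain of total weight $<2^{-n}$ has endpoints differing by an element of $V_n$ --- spelled out; without it, ``linear comparability'' is asserted rather than proved.

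The genuine gap is $(a)\Rightarrow(b)$, and you flag it yourself. The contradiction you sketch --- building a chain-type pseudometric $\rho$ with a superlinear jump penalty from witness pairs to failure of large-scale geodesicity --- runs into exactly the obstructions you name, and there is no indication of how to escape them. The cleaner route does not use a chain distance at all. Assume no ball $B_n:=B_n^d(1)$ generates $G$. Then the clopen subgroups $H_n:=\langle B_n\rangle$ form a nested sequence that is proper for every $n$, increases strictly along a subsequence, and exhausts $G$ (since $G=\bigcup_n B_n$). Define $\phi(g):=\min\{n\geq 0: g\in H_{n+1}\}$. Since the $H_n$ are nested subgroups, $\phi(gh)\leq\max(\phi(g),\phi(h))$ and $\phi(g^{-1})=\phi(g)$, so for \emph{any} nondecreasing $f$ with $f(0)=0$ the function $\rho_f(g,h):=f(\phi(g^{-1}h))$ is a left-invariant pseudometric, and it is continuous because its sublevel sets are the clopen $H_n$. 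Now pick $g_n\in H_{n+1}\setminus H_n$ of minimal $d$-length $m_n:=d(1,g_n)$; since $g_n\notin B_n$, $m_n\geq n\to\infty$. Choosing $f$ with $f(n)>(m_n+1)^2$ makes $\rho_f(1,g_n)$ superlinear in $d(1,g_n)$, contradicting $\rho_f\ll d$ and hence the maximality of $d$. This shows some $B_n$ is a coarsely bounded generating set, giving $(b)$; the quasi-isometry of $d$ with $d_{B_n}$ then follows by applying $(b)\Rightarrow(c)$ and the uniqueness of the $\ll$-maximal metric up to quasi-isometry, exactly as you say at the end. The point to internalize is that a length function built from an ascending chain of open subgroups, post-composed with a fast-growing gauge, is automatically a continuous left-invariant pseudometric --- the finiteness and triangle-inequality worries that stalled your chain-distance approach simply do not arise.
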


\noindent
It follows that the word metric on $G$ with respect to any coarsely bounded
generating set gives a well-defined quasi-isometry type whenever $G$
is boundedly generated. 

%\mk{[Alternatively, skipping likes 158 to 169 (we lose the iff for cb
  %generated):]
%When additionally $G$ is boundedly generated, $\mathcal{E}_L$ is
%induced by a suitable word metric:
%\begin{theorem}\label{thm:max_gen_alt}
  %Let $G$ be a boundedly generated Polish group.  Then
  %$\mathcal{E}_L$ is induced by the word metric $\rho_S$ on a coarsely
  %bounded, symmetric generating set $S$, and this metric is well-defined up
  %to quasi-isometry for any choice of $S$.
%\end{theorem}
%}

\subsubsection{Locally bounded subgroups of $\Map(S)$}\label{sec:lbdd_map} 

%\Michael{I would skip the permutation topology and just define the
  %subsurface stabilizer basis, then cite nick and priyam to say that
%this is the quotient compact-open topology}
%For a surface $S$, consider $\Map(S)$ with the permutation topology generated by translates of the following local base at $\text{Id}_S$:
%$$
%U_A:=\{f\in\Map(S)\mid f(a)=a \text{ for } a\in A\}
%$$
%where $A$ is a finite collection of simple closed curves. 
%We can equivalently define 
%this using the local base $\{U_\Sigma\}_{\Sigma}$ at $\text{Id}_S$, 
%where $\Sigma$ is a compact subsurface and $U_\Sigma\subset\Map(S)$ is the pointwise stabilzer of $\Sigma$.
%Note that this topology is also equivalent to the compact-open 
%topology. With respect to this topology, 
%$\Map(S)$ is a Polish group and we may 
%apply Rosendal's framework. \Michael{Cite nick--priyam}

For a surface $S$, recall that 
\[\Map(S) \dfn \Homeo^+(S)/\Homeo_0(S)\] where
$\Homeo^+(S)$ is the group of orientation-preserving self-homeomorphisms
of $S$, endowed with the compact-open topology, and $\Homeo_0(S)$ is
its identity component.  The induced
(quotient) topology on $\Map(S)$ has a local (clopen) base at $\Id_S$
induced by the pointwise stabilizers $\tilde U_\Sigma \dfn \{f \in
  \Homeo^+(S) :
f|_\Sigma = \Id_\Sigma \}$ of compact,
essential subsurfaces $\Sigma \subset S$;  we
denote the elements of this local base
$U_\Sigma \dfn \tilde U_\Sigma / (\tilde U_\Sigma
\cap \Homeo_0(S)) < \Map(S)$.

\begin{remark*}\label{rmk:Gbase} 
    Let $G \leq \Map(S)$ a Polish subgroup. Given an essential
    compact subsurface $\Sigma \subset S$ let $\nu_\Sigma$ denote
    the (pointwise) $G$-stabilizer for $\Sigma$, that is
    $\nu_\Sigma \dfn U_\Sigma \cap G$.
    Since $\Map(S)$ has a local base $\{U_\Sigma\}_\Sigma$ 
    at $\Id_S$, likewise $G$ has a local base
    $\{\nu_\Sigma\}_\Sigma$ at $\Id_S$.
\end{remark*}

\begin{remark*}
    Since $G$ has a local base of open subgroups at $\Id_S$, $G$ is non-Archimedean.
\end{remark*}

The following is immediate from \cref{rmk:Gbase}:
\begin{lemma}\label{lem:lb_stab_subsurface}
    Let $G\leq\Map(S)$ be a locally
  bounded Polish subgroup. There exists a compact essential subsurface
$\Sigma\subset S$ whose 
stabilizer $\nu_\Sigma$ is  coarsely bounded in $G$. 
\end{lemma}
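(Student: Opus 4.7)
The plan is to directly unpack the definition of local boundedness against the neighborhood base for $G$ at the identity described in \cref{rmk:Gbase}. Since $G$ is locally bounded, I would first fix a coarsely bounded identity neighborhood $V \subset G$; such a $V$ exists by definition.

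Next, I would apply \cref{rmk:Gbase}, which asserts that the pointwise $G$-stabilizers $\{\nu_\Sigma\}_\Sigma$, indexed over compact essential subsurfaces $\Sigma \subset S$, form a local base for $G$ at $\Id_S$. Hence there exists some compact essential $\Sigma \subset S$ with $\nu_\Sigma \subset V$.

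It only remains to observe that the collection $\mathcal{CB}$ of coarsely bounded subsets of $G$ is downward closed under inclusion. This is immediate from the second bullet of \cref{def:coarse_str} applied to the product $V \times V$: since $\nu_\Sigma \times \nu_\Sigma \subset V \times V$ and the latter lies in $\mathcal{E}_L$ (by coarse boundedness of $V$ and \cref{def:cb_set}), we conclude $\nu_\Sigma \times \nu_\Sigma \in \mathcal{E}_L$, i.e.\ $\nu_\Sigma$ is coarsely bounded in $G$.

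There is no genuine obstacle — the lemma is essentially a tautology once \cref{rmk:Gbase} is in hand, which is exactly the authors' remark that the statement is \emph{immediate} from it. The only conceptual ingredient is that coarse boundedness is inherited by subsets, which follows from the axioms of a coarse structure.
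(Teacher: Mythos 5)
Your argument is correct and is precisely the approach the paper intends: the paper itself states that the lemma is ``immediate from'' \cref{rmk:Gbase}, and you have spelled out the two-step deduction — pick a coarsely bounded neighborhood $V$ (by local boundedness) and then a basic open $\nu_\Sigma \subset V$ (by \cref{rmk:Gbase}) — together with the correct observation that coarse boundedness passes to subsets via the second axiom of \cref{def:coarse_str}. No gap, and no meaningful divergence from the paper's proof.
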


\subsubsection*{Some important subgroups}
Let $\Ends(S)$ denote the (Freudenthal) endspace of $S$ and $\Ends_g(S) \subset
\Ends(S)$ the subspace of non-planar ends.
By $\PMap(S) \leq \Map(S)$ we denote the \textit{pure mapping class
group of $S$}, which is the kernel of natural map 
\[\pi : \Map(S) \to \Homeo(\Ends(S),\Ends_g(S))\] 
obtained from the action of $\Map(S)$
on the endspace of $S$. Let $\PMap_c(S) \leq \PMap(S)$ 
denote the subgroup of compactly supported (necessarily pure) mapping classes.  
$\PMap(S)$ is closed in $\Map(S)$, hence it is a
Polish subgroup. $\PMap_c(S)$ is not closed when $S$ is
infinite-type; let
$\overline{\PMap_c(S)}$ denote its closure.
% \Michael{idk where exactly
% to put this, but maybe here?}

\begin{remark}
  When $\partial S = \varnothing$ we note that $\PMap_c(S) =
  \Map_c(S)$, the (more commonly studied) subgroup of compactly
  supported mapping classes.
\end{remark}

\subsection{Cayley--Abels--Rosendal graphs}\label{sec:car}

%In this subsection, we briefly introduce a graphical model for
%boundedly generated Polish groups. 
Analogous to Cayley--Abels graphs
for totally disconnected, locally compact groups,
Branman--Domat--Hoganson--Lyman \cite{bdhl}
define graphical models for boundedly generated Polish groups.   We
generalize these results to the locally bounded case in
\cref{sec:ccar}.

\begin{definition}[{\cite[\S3]{bdhl}}]\label{def:car}
  A connected, countable simplicial graph $\Gamma$ is a
  \textit{Cayley--Abels--Rosendal graph} for a topological group $G$
  if $G$ admits a continuous, vertex-transitive, cocompact, and simplicial action
  with coarsely bounded vertex stabilizers.
\end{definition}

\begin{proposition}[{\cite[Prop.~8]{bdhl}}]\label{prop:car}
  Let $G$ be a Polish group.  Then $G$ admits a
  Cayley--Abels--Rosendal graph if and only if $G$ is boundedly generated.
  Moreover, the orbit map of $G$ on any such graph is a
  quasi-isometry.
\end{proposition}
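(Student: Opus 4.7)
The plan is to prove the equivalence in two directions and then deduce the quasi-isometry for any CAR graph by assembling the \v Svarc--Milnor ingredients from \cref{lem:cbaction}, \cref{lem:expanding_orbit_map}, \cref{lem:ce_char}, and \cref{thm:max_gen}. The main obstacle will be the reverse direction, specifically constructing a \emph{countable} graph $\Gamma$, which requires producing an open coarsely bounded subgroup of $G$.

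For the easier forward direction, suppose $G$ admits a CAR graph $\Gamma$ and fix a base vertex $v_0$ with (coarsely bounded) stabilizer $H = G_{v_0}$. Vertex-transitivity together with cocompactness produces finitely many $G$-orbits of edges with representatives $\{v_0, g_1 v_0\}, \ldots, \{v_0, g_k v_0\}$, and I will set $S = H \cup \bigcup_{i=1}^k H g_i H$, which is coarsely bounded since $\mathcal{CB}$ is closed under finite unions and products. A path-lifting argument gives $G = \langle S \rangle$: any $g \in G$ defines a path $v_0 = h_0 v_0, h_1 v_0, \ldots, h_n v_0 = g v_0$ in $\Gamma$ (with $h_0 = 1$), whose consecutive differences $h_{j-1}^{-1} h_j$ each lie in $S$, so $g \in h_n H \subseteq S^{n+1}$.

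For the reverse direction, assume $G$ is boundedly generated, hence locally bounded by \cref{prop:gentoloc}. I will fix an open coarsely bounded subgroup $V \leq G$, available for the Polish subgroups of $\Map(S)$ treated here via \cref{rmk:Gbase}, and a coarsely bounded symmetric generating set $A \supseteq V$. Define $\Gamma$ to have vertex set $G/V$ and an edge between $gV$ and $hV$ exactly when $g^{-1}h \in VAV \setminus V$; this $\Gamma$ is countable (since $G/V$ is discrete in the separable $G$), connected (since $A$ generates $G$), and carries a simplicial, vertex-transitive, continuous $G$-action whose stabilizers are $G$-conjugates of $V$ and hence coarsely bounded. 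Cocompactness on edges follows from \cref{prop:ros_crit}, which yields $VAV \subseteq (FV)^n$ for some finite $F$ and $n$, so $VAV$ meets only finitely many left $V$-cosets.

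Finally, for the ``moreover'' clause, let $\Gamma$ be any CAR graph and $\omega : G \to V(\Gamma)$ the orbit map. Then $\omega$ is bornologous by \cref{lem:cbaction} and expanding by \cref{lem:expanding_orbit_map}, since the path-lifting argument of the forward direction gives $\omega^{-1}(B_n(v_0)) \subseteq S^n H$ for the coarsely bounded $S$ constructed there. Vertex-transitivity yields coboundedness, so $\omega$ is a coarse equivalence by \cref{lem:ce_char}. To upgrade to a quasi-isometry, the pullback $\omega^* d_\Gamma$ is a continuous left-invariant pseudometric on $G$ realizing the coarse structure $\mathcal{E}_L$, so by \cref{thm:max_gen} it is quasi-isometric to the word metric on any coarsely bounded generating set of $G$; combined with the fact that $V(\Gamma)$ with its graph metric is bi-Lipschitz to $(G, \omega^* d_\Gamma)$ modulo the coarsely bounded stabilizer, this upgrades $\omega$ to a quasi-isometry.
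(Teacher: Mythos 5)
The paper does not prove this proposition directly but cites \cite{bdhl}; its own generalization (\cref{prop:ccar} plus \cref{lem:ccar_existence} and the remarks that follow) contains the analogous arguments, and your overall strategy matches. Your forward direction (path-lifting to show the finitely many double cosets $Hg_iH$ together with $H$ generate) is fine, and the coarse-equivalence assembly from \cref{lem:cbaction}, \cref{lem:expanding_orbit_map}, and \cref{lem:ce_char} mirrors the proof of \cref{prop:ccar}.

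There is, however, a gap in your cocompactness verification in the reverse direction. You declare $gV \sim hV$ whenever $g^{-1}h \in VAV \setminus V$ for a coarsely bounded symmetric generating set $A$, and assert that $VAV \subseteq (FV)^n$ (from \cref{prop:ros_crit}) ``meets only finitely many left $V$-cosets.'' That does not follow: already $Vf V$ is a union of $[V : V \cap fVf^{-1}]$ left $V$-cosets, which can be infinite, so $(FV)^n$ and hence $VAV$ can decompose into infinitely many left (and double) $V$-cosets, giving infinitely many edge orbits. The repair is exactly what the paper does via \cref{rmk:bdgen_finiteZ} and \cref{rmk:car_build}: use \cref{prop:ros_crit} to extract the \emph{finite} set $F$ with $A \subseteq (FV)^n$, note $G = \langle V, F\rangle$ (as $V \subseteq A$ and $\langle A \rangle = G$), and build the graph with edges governed by $VFV$ rather than $VAV$; then the edge orbits are indexed by the at most $|F|$ double cosets $VfV$, $f \in F$. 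A secondary scope issue: you justify the existence of an open coarsely bounded $V$ only via \cref{rmk:Gbase}, i.e.\ for subgroups of $\Map(S)$; for arbitrary Polish groups the hypothesis of a clopen subgroup basis (non-Archimedean) is genuinely needed, as the paper's \cref{prop:ccar} makes explicit (and as the example $G = \R$, boundedly generated with no proper open subgroup, shows). This should be acknowledged rather than quietly specialized.
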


\subsection{Asymptotic dimension otherwise}\label{sec:asdim_ow}

We now shift our focus to the asymptotic dimension 
of mapping class groups. 
Asymptotic dimension was introduced by 
Gromov and gives a `large scale' notion of dimension; see
\cite{asdim_bell} for a survey of results.

\begin{definition}
    Let $X$ be a metric space. Then $\text{asdim}(X)\leq n$ if for
    every uniformly bounded open cover $\mathcal{U}$, there is a
    uniformly bounded open cover $\mathcal{V}$ of multiplicity $n+1$
    such that $\mathcal{U}$ refines $\mathcal{V}$. We say that
    $\text{asdim}(X)=n$ if $\text{asdim}(X)\leq n$ but 
    $\text{asdim}(X)\not\leq n-1$.
\end{definition}

\begin{proposition}[{\cite[Prop.~22]{asdim_bell}}]\label{prop:coarse_inv_asdim}
    Let $X$ and $Y$ be metric spaces with the standard coarse
    structure and $f:X\rightarrow Y$ a coarse embedding. 
    Then $\text{asdim}(X)\leq \text{asdim}(Y)$.
\end{proposition}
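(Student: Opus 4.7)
The plan is to use the standard pullback-and-refine argument, via the equivalent $R$-multiplicity characterization of asymptotic dimension: $\asdim(X) \leq n$ if and only if for every $R > 0$ there exists a uniformly bounded cover $\mathcal{V}$ of $X$ such that every $R$-ball meets at most $n+1$ members of $\mathcal{V}$. Assume $\asdim(Y) \leq n$ and let $n^* = \asdim(Y)$. Since $f : X \to Y$ is a coarse embedding of metric spaces, we obtain control functions $\rho_-,\rho_+ : [0,\infty) \to [0,\infty)$ with $\rho_-(t) \to \infty$ as $t \to \infty$ satisfying
\[
\rho_-(d_X(x,y)) \;\leq\; d_Y(f(x),f(y)) \;\leq\; \rho_+(d_X(x,y)).
\]

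Given $R > 0$, set $S \dfn \rho_+(R)$. Applying the $R$-multiplicity criterion to $Y$ at scale $S$, there exists a uniformly bounded cover $\mathcal{W}$ of $Y$ with $\diam(W) \leq D$ for all $W \in \mathcal{W}$, such that each $S$-ball in $Y$ meets at most $n+1$ elements of $\mathcal{W}$. Pull back to define $\mathcal{V} \dfn \{f^{-1}(W) : W \in \mathcal{W}\}$, which clearly covers $X$. For boundedness: if $x,y \in f^{-1}(W)$ then $d_Y(f(x),f(y)) \leq D$, so the expanding estimate forces $d_X(x,y) \leq t_0$, where $t_0 \dfn \sup\{t : \rho_-(t) \leq D\} < \infty$; hence $\mathcal{V}$ is uniformly bounded by $t_0$. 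For $R$-multiplicity: if $z \in B_R(x) \cap f^{-1}(W)$, the bornologous estimate gives $d_Y(f(x),f(z)) \leq \rho_+(R) = S$, so $W$ meets $B_S(f(x))$; the number of such $W$ is at most $n+1$ by hypothesis on $\mathcal{W}$.

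To match the cover-refinement definition stated in the paper, one either uses the standard equivalence between the two formulations of $\asdim$ or, equivalently, argues directly: given a uniformly bounded open cover $\mathcal{U}$ of $X$, form an open uniformly bounded cover of $Y$ by thickening $\{f(U) : U \in \mathcal{U}\}$ (which is uniformly bounded by $\rho_+$) and completing it using any uniformly bounded open cover of $Y$, apply $\asdim(Y) \leq n$ to obtain a coarsening $\mathcal{W}$ of uniformly bounded multiplicity $n+1$, and pull back by $f^{-1}$ as above; each preimage is bounded in $X$ by the expanding estimate, and $\mathcal{U}$ refines the pullback cover since each $U \in \mathcal{U}$ is contained in some thickened $f(U')$ which lies in some $W \in \mathcal{W}$. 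Openness is preserved under preimage by the continuity-free standard trick of passing to open thickenings of small radius (well-defined because the $f^{-1}(W)$ are uniformly bounded).

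The main obstacle here is really bookkeeping rather than any substantive idea: one must track how the control functions convert scales on $Y$ into scales on $X$ and vice versa, and reconcile the paper's refinement-based definition with the multiplicity-based one used in the standard argument. Neither issue is deep, and the result is a direct and purely formal consequence of the definitions once the equivalent characterizations are in hand.
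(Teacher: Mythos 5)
The paper does not give its own proof of this proposition; it cites it verbatim from Bell's survey, so there is no internal argument to compare against. Your proof is the standard pullback-of-covers argument that one would find in the cited reference, and it is correct. The first paragraph carries the content: you translate scales via $S = \rho_+(R)$, pull back a cover $\mathcal{W}$ of $Y$ that is uniformly bounded with $S$-multiplicity at most $n+1$, and then use the lower control $\rho_-$ (expanding) to bound the preimages and the upper control $\rho_+$ (bornologous) to bound the $R$-multiplicity; this is exactly right, and the hypothesis $\rho_-(t) \to \infty$ is used precisely where you need $t_0 = \sup\{t : \rho_-(t) \leq D\}$ to be finite. The second paragraph, reconciling the $R$-multiplicity formulation with the paper's open-cover refinement formulation, is admittedly sketchier --- preimages under a coarse embedding need not be open, and "completing by thickening" is waved at rather than spelled out --- but the two formulations are standardly equivalent and these really are bookkeeping issues, not gaps. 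The only thing I would flag for a write-up is to either cite the equivalence of definitions outright (as Bell does) or actually carry out the thickening argument; as it stands, the second paragraph is an acknowledgment that work remains rather than the work itself.
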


% \Michael{Changed to monotonicity result, we need to correct
% reference} 
It follows that asymptotic dimension is a coarse
invariant and hence well-defined in the setting of locally bounded
Polish groups. In particular, we can look at the asymptotic
dimension of locally bounded surface mapping class groups. When $S$
is a finite-type surface, {\cite{bbf}} shows that the asymptotic
dimension of $\Map(S)$ is finite. In the case of infinite type
surfaces, the only result (as far as the authors know) appears in
{\cite{shift}}. We summarize the relevant details below.

Let $S$ be an infinite-type surface. 
Suppose that there exists a countable family of homeomorphic
subsurfaces $\Sigma_{i\in \Z} \subset S$, each with a single
boundary component, and a simple path 
$\gamma \subset S \setminus \bigcup_i
\mathring \Sigma_i$ intersecting each $\partial \Sigma_i$
sequentially and accumulating to two distinct ends. A \textit{shift
map} $\omega$ is
a homeomorphism supported on a regular neighborhood of
$\gamma \cup \left(\bigcup_i \Sigma_i\right)$, preserving $\gamma$ set-wise and
restricting to homeomorphisms $\Sigma_i \to \Sigma_{i+1}$.
If in addition $\braket{\omega}$ is not coarsely bounded in
$\Map(S)$, then it is an \textit{essential shift} \cite[\S1]{shift}.

\begin{theorem}[{\cite[Thm.~1.1]{shift}}]\label{thm:shift}
  If $S$ is stable and $\Map(S)$ is boundedly generated and 
  contains an essential shift, then $\asdim \Map(S) = \infty$.
\end{theorem}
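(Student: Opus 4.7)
The plan is to show that $\Map(S)$ coarsely contains $\Z^n$ for every $n$, which by \cref{prop:coarse_inv_asdim} yields $\asdim \Map(S) \geq n$ for all $n$, hence infinite.  The raw material is the shift $\omega$ together with the disjoint subsurfaces $\Sigma_i \subset S$ on which it acts transitively.

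First, I would fix any nontrivial compactly-supported mapping class $f \in \PMap_c(S)$ supported in $\Sigma_0$, say the Dehn twist about an essential simple closed curve $c \subset \Sigma_0$ (such a curve exists after possibly replacing the $\Sigma_i$ by a thickening, using the hypothesis that $S$ is stable so that the $\Sigma_i$ may be taken to have complicated enough topology).  Setting $f_i \dfn \omega^i f \omega^{-i}$, each $f_i$ is supported in $\Sigma_i$, so the $f_i$ commute pairwise and generate a subgroup $H \cong \bigoplus_{i \in \Z} \Z$.  In particular, for each $n$ the subgroup $H_n \dfn \langle f_1, \dots, f_n \rangle \cong \Z^n$ sits inside $\Map(S)$.

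The real content is to show that the inclusion $H_n \hookrightarrow \Map(S)$ is a coarse embedding with respect to the word metric on $H_n$ and the coarse structure $\mathcal{E}_L$ on $\Map(S)$ (which is metrizable by the bounded-generation hypothesis, see \cref{thm:max_gen}).  Equivalently, I would show that for any coarsely bounded $A \subset \Map(S)$ the preimage $H_n \cap A$ is bounded in $H_n$, and conversely that the inclusion is Lipschitz (which is automatic since $H_n$ is finitely generated and $\Map(S)$ is boundedly generated).  To do so, I would test against an isometric $\Map(S)$-action on a well-chosen geometric object: for each $\Sigma_i$ the subsurface projection (or, in the compactly-supported setting, the projection to the curve graph of a suitable finite-type exhausting subsurface containing $\Sigma_1,\dots,\Sigma_n$) sees the coordinate $k_i$ of an element $f_1^{k_1}\cdots f_n^{k_n}$ but ignores the others, because the $\Sigma_i$ are disjoint.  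Summing the projection distances then gives a lower bound on $\ell^1$-type word length in $H_n$, while the fact that each $f_i$ lies in a coarsely bounded conjugate of a fixed generating set controls the upper bound.

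The main obstacle is precisely this last step — verifying that the joint orbit on a product of subsurface-projection targets is quasi-isometric to the standard $\Z^n$.  The essentiality hypothesis on $\omega$ is exactly what guarantees this: because $\langle \omega \rangle$ is \emph{not} coarsely bounded in $\Map(S)$, the conjugates $\Sigma_i$ are spread apart in the coarse structure, and one can separate the $f_i$ using witnesses (in the sense of \cref{sec:ccpt}) that detect only a single $\Sigma_i$.  Once the coarse embedding $\Z^n \hookrightarrow \Map(S)$ is in hand for every $n$, the conclusion $\asdim \Map(S) = \infty$ is immediate from \cref{prop:coarse_inv_asdim} together with the elementary fact that $\asdim \Z^n = n$.
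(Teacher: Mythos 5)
This theorem is stated in the present paper only as a citation of Grant--Rafi--Verberne \cite{shift}; no proof is given here, so there is no internal argument to compare against. Reviewing your proposal on its own terms: the high-level strategy (coarsely embed $\Z^n$ for every $n$ and apply \cref{prop:coarse_inv_asdim}) is reasonable and in the spirit of subsurface-projection arguments, but the crucial step is left unsubstantiated, and the way you invoke essentiality does not actually engage with the difficulty.

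The gap is in the expanding direction: you must establish a lower bound of the form $d_{\Map(S)}(1, f_1^{k_1}\cdots f_n^{k_n}) \gtrsim \max_i |k_i|$ with respect to the left-coarse structure. The projection targets you suggest --- the curve graph of $\Sigma_i$, or the annular curve graph of $c_i = \omega^i(c)$ --- are not $\Map(S)$-spaces: only the stabilizer of $\Sigma_i$, respectively of $c_i$, acts on them, so \cref{lem:cbaction} does not apply, and you get no bornologous orbit map from all of $\Map(S)$ and therefore no coarse lower bound. Manufacturing a genuine $\Map(S)$-equivariant space that simultaneously registers twisting in each $\Sigma_i$ is exactly where essentiality must enter; your stated justification (``the conjugates $\Sigma_i$ are spread apart in the coarse structure'') does not make sense as a use of essentiality, since the $\Sigma_i$ are disjoint compact subsurfaces of $S$ whether or not $\omega$ is essential. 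To see concretely that something is wrong: there are surfaces whose mapping class group is coarsely bounded yet which admit a shift with positive-genus subsurfaces $\Sigma_i$ (self-similar surfaces with genus, in the Mann--Rafi framework). Your construction produces the same commuting Dehn twists $f_i$, and the argument as written would conclude $\asdim\Map(S) = \infty$, contradicting coarse boundedness. Since nothing in your argument uses essentiality beyond the informal remark quoted above, it proves too much. The missing ingredient is the geometric model on which $\Map(S)$ acts geometrically --- in this paper the witness-cocompact arc and curve model of \cref{sec:ccpt,sec:model_general} under the non-displaceability hypothesis, and in \cite{shift} a shift-specific construction --- and that is where essentially all of the work lies.
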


When $S$ is stable, \cref{thm:asdim} and
\cref{thm:shift} fully classify the infinite asymptotic
dimension cases:

\begin{theorem}[{\cite[Thm.~1.6]{shift}}]\label{thm:class}
  Let $S$ be stable and $\Map(S)$ be boundedly generated. If
  $S$ contains neither a non-displaceable subsurface nor an
  essential shift, then $\Map(S)$ is coarsely bounded.
\end{theorem}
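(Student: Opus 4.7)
The plan is to verify coarse boundedness of $\Map(S)$ directly via the Rosendal criterion (\cref{prop:ros_crit}). Since $\Map(S)$ has a local base at identity given by the stabilizers $\{U_\Sigma\}$ of compact essential subsurfaces (\cref{rmk:Gbase}), it suffices, for each such $\Sigma$, to produce a finite set $F \subset \Map(S)$ and $n \in \N$ such that $\Map(S) \subset (FU_\Sigma)^n$.

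I would begin by fixing a compact essential $\Sigma \subset S$ chosen to witness the self-similar structure imposed by stability — specifically, one whose complementary components realize every homeomorphism type of end-neighborhood that stability distinguishes. Any $g \in \Map(S)$ is determined modulo $U_\Sigma$ by the pair $(g\Sigma, g|_\Sigma)$; since $\Sigma$ is compact, $\Map(\Sigma)$ is finitely generated, and a finite generating set extended by the identity on $S \setminus \Sigma$ can be absorbed into $F$. The remaining task is then to cover the placements $\{g\Sigma : g \in \Map(S)\}$ modulo $U_\Sigma$ by finitely many orbits under a finite collection of ``translation'' mapping classes.

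Next I would use the two structural hypotheses in tandem. Since every compact subsurface is displaceable, stability provides finitely many ``swap'' mapping classes that transitively permute the placements of $\Sigma$ within each homeomorphism class of complementary component of an enlargement of $\Sigma$. Together with finitely many transition elements between such classes, these form a candidate finite set $F' \subset F$; one then checks that every placement of $\Sigma$ is reached by a bounded word in $F'$.

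The main obstacle is justifying this last boundedness. A priori, reaching a placement of $\Sigma$ deep in $S$ might require arbitrarily long words in $F'$; ruling this out uses the hypothesis that no essential shift exists. Indeed, an unbounded sequence of placements $\{g_n \Sigma\}_{n \in \Z}$ connected by iterating a fixed element of $F'$ would produce a bi-infinite chain of homeomorphic subsurfaces $\Sigma_n$ along a path accumulating at two ends, together with a mapping class acting as $\Sigma_n \mapsto \Sigma_{n+1}$ — precisely an essential shift, contradicting the hypothesis (and paralleling the logic behind \cref{thm:shift}). Combined with bounded generation of $\Map(S)$ (\cref{prop:car}) to force the exponent $n$ to be finite, this yields the Rosendal criterion and hence coarse boundedness of $\Map(S)$.
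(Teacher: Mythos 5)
The paper does not prove this theorem; it is imported as a black box from Grant--Rafi--Verberne \cite[Thm.~1.6]{shift}, so there is no internal argument to compare against. Your attempt to verify the Rosendal criterion (\cref{prop:ros_crit}) directly is therefore a fresh route, but it has a substantive gap at exactly the step you call the ``main obstacle.'' You assert that an unbounded sequence of placements produced by iterating a fixed $f \in F'$ yields an essential shift. However, a shift map as defined in \cref{sec:asdim_ow} carries a lot of rigid structure: pairwise disjoint homeomorphic subsurfaces $\Sigma_i$, each with a \emph{single} boundary component, strung along a simple path $\gamma$ that accumulates onto exactly \emph{two distinct} ends, with the homeomorphism supported on a regular neighborhood of $\gamma \cup \bigcup_i \Sigma_i$ and sending $\Sigma_i \to \Sigma_{i+1}$. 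None of this is automatic for the translates $f^n\Sigma$: $\Sigma$ will generally have several boundary components, the $f^n\Sigma$ need not be disjoint, and they may escape towards a single end or accumulate onto a Cantor set of ends. Converting unboundedness of $\langle f \rangle$ into the existence of a bona fide essential shift is, in effect, the real content of Grant--Rafi--Verberne's theorem, and it leans on the stability hypothesis far more structurally than your sketch acknowledges.

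Two smaller issues compound the difficulty. The claim that ``stability provides finitely many swap mapping classes that transitively permute the placements of $\Sigma$'' conflates displaceability --- the existence of a single $f$ with $f\Sigma \cap \Sigma = \varnothing$ --- with uniform boundedness of orbits, which is precisely what needs to be established. And the closing appeal to \cref{prop:car} ``to force the exponent $n$ to be finite'' does not do what you want: bounded generation guarantees a well-defined quasi-isometry type, but it does not by itself bound the diameter of $\Map(S)$ in the $(FU_\Sigma)$-word metric, which is the very thing you are trying to prove.
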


\subsection{Classification of local boundedness}\label{sec:class_mapS}
Since asymptotic dimension is well-defined 
for locally bounded mapping class 
groups, we recall results from \cite{mannrafi} and \cite{hill} that  classify the infinite type surfaces whose mapping class groups and pure mapping class groups are locally bounded. Here, for $A\subset\Ends(S)$, $M(A)$ is the set of maximal ends in $A$ with respect to the partial order on $\Ends(S)$ defined in \cite{mannrafi}. 

\begin{theorem}[{\cite[Thm. ~1.4]{mannrafi}}]
    Let $S$ be an infinite type surface. Then $\Map(S)$ is locally
    bounded if and only if there is a finite type surface
    $\Sigma\subset S$ such that the complimentary regions of $K$
    each have infinite type and zero or infinite genus, and
    partition $\Ends(S)$ into finitely many clopen sets
   \[ 
    \Ends(S)=\Big(\bigsqcup_{A\in\mathcal{A}}A\Big)\sqcup\Big(\bigsqcup_{P\in\mathcal{P}}P\Big)
  \] 
    such that:
    \begin{enumerate}
        \item Each $A\in\mathcal{A}$ is self-similar with
          $M(A)\subset M(\Ends(S))$ and
          $M(\Ends(S))\subset\bigsqcup_{A\in\mathcal{A}}M(A)$. \item
          each $P\in\mathcal{P}$ is homeomorphic to a clopen subset 
        of some $A\in\mathcal{A}$.
        
        \item for any $x_A \in M(A)$, and any neighborhood $V$ of
          the end $x_A \in S$, there is $f_V \in \Homeo(S)$ so that
          $f_V(V)$ contains the complimentary region to $K$ with end
        set $A$.
    \end{enumerate}
    Moreover, in this case $\nu_\Sigma$ is a coarsely bounded
    neighborhood of the identity.
\end{theorem}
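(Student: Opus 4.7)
\medskip

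The plan is to prove both directions using Rosendal's coarse boundedness criterion (\cref{prop:ros_crit}), leveraging the structure of $\Map(S)$ acting on the partition of $S$ induced by a compact subsurface.

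\textbf{Backward direction (\emph{sufficiency}).} Assume the endspace decomposition with the stated properties exists for some finite-type $\Sigma \subset S$. I would show directly that $\nu_\Sigma$ is coarsely bounded; by definition this makes $\nu_\Sigma$ a coarsely bounded identity neighborhood. Fix an arbitrary identity neighborhood, which by \cref{rmk:Gbase} we may assume is of the form $\nu_{\Sigma'}$ for some compact essential $\Sigma' \supset \Sigma$. Any $f \in \nu_\Sigma$ permutes the (finitely many) complementary regions of $\Sigma$, partitioned according to their endsets; after composing with a uniformly finite set $F_0$ of ``swap'' homeomorphisms realizing the necessary permutations, we may assume $f$ fixes each complementary region setwise. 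The task reduces to displacing the extra compact surface $\Sigma' \setminus \Sigma$ inside each complementary region $R$ via finitely many ``nice'' homeomorphisms. For a complementary region with end set $A \in \mathcal{A}$, I would apply condition (3): self-similarity at a maximal end $x_A$ lets us find $g_V \in \Homeo(S)$ mapping a small neighborhood $V$ of $x_A$ onto (a surface containing) the $\Sigma' \cap R$ piece, conjugating away the relevant support. For $P \in \mathcal{P}$, condition (2) reduces to the self-similar case after an embedding into a region of type $A$. Iterating this procedure at most finitely many times yields a decomposition $f = g_1 g_2 \cdots g_n$ with each $g_i \in F \cdot \nu_{\Sigma'}$ for a uniformly finite $F$, verifying the hypothesis of \cref{prop:ros_crit}.

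\textbf{Forward direction (\emph{necessity}).} Suppose $\Map(S)$ is locally bounded. By \cref{lem:lb_stab_subsurface} there is a compact essential $\Sigma \subset S$ with $\nu_\Sigma$ coarsely bounded; I would show that after possibly enlarging $\Sigma$, its complementary regions realize the advertised decomposition. The main strategy is contrapositive: if any of the three conditions fails, one produces a sequence $\{\omega^n\}_{n \in \N} \subset \nu_\Sigma$ that is not coarsely bounded, contradicting local boundedness. Specifically:

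\begin{enumerate}[label=(\roman*)]
  \item If a complementary region has finite type, or has nonzero finite genus, I would construct an unbounded sequence of mapping classes supported in that region (e.g.\ powers of a pseudo-Anosov) that cannot be absorbed into any open neighborhood of the identity.
  \item If some end class $A$ is not self-similar at a maximal end, there is a neighborhood $V$ of some $x_A \in M(A)$ that is not homeomorphic to a nested subneighborhood; this obstructs the ``displacement'' that made the backward direction work, and one can again build an essential shift along a sequence of copies of subregions accumulating at $x_A$, giving an unbounded $\braket{\omega}$.
  \item Partition-type conditions (maximal ends lying in the $A$-pieces, $P$-pieces embedding into $A$-pieces) follow from similar obstructions: any violation permits a shift map exchanging infinitely many inequivalent subregions.
\end{enumerate}

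Putting these displacement/shift constructions together yields the decomposition. The ``moreover'' clause is exactly what the backward direction established.

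\textbf{Main obstacle.} The hard part is the forward direction: extracting the rigid combinatorial structure (self-similarity at each maximal end, the clopen partition respecting the maximal-end poset of \cite{mannrafi}) purely from the abstract hypothesis that $\nu_\Sigma$ is coarsely bounded. This requires systematically converting each potential failure mode of the decomposition into an explicit unbounded sequence inside $\nu_\Sigma$, which in turn depends on a careful analysis of the $\Map(S)$-action on $\Ends(S)$ with its partial order --- exactly the machinery developed in \cite{mannrafi}.
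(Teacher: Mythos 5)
This statement is quoted from Mann--Rafi \cite[Thm.~1.4]{mannrafi} and is not proved in the paper at all: it appears in \cref{sec:class_mapS} purely as imported background, so there is no ``paper's own proof'' to compare against. Your sketch is therefore being weighed against Mann--Rafi's original argument, and at that level of resolution it is a plausible but quite coarse outline of what they do.

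A few remarks on the sketch itself. The backward direction is broadly in the right spirit: one does verify coarse boundedness of $\nu_\Sigma$ via \cref{prop:ros_crit} by using self-similarity at maximal ends (and the homeomorphism in condition (3)) to ``absorb'' the support of an arbitrary $f\in\nu_\Sigma$ into a finite product of controlled elements. But you are glossing over the real work: the inductive displacement scheme has to respect the partial order on ends and handle the $\mathcal{P}$-pieces in tandem with the $\mathcal{A}$-pieces, and getting a \emph{uniform} finite $F$ and exponent $n$ is where the effort lives. The forward direction is where your sketch becomes noticeably thinner. Your item (i) is essentially redundant: if a complementary region $R$ of $\Sigma$ has finite type, then $\Map(R)\hookrightarrow\nu_\Sigma$ is already an infinite discrete subgroup, which is not coarsely bounded by \cref{prop:ros_crit} directly; no pseudo-Anosov is needed, just a single Dehn twist. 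For items (ii)--(iii), saying ``one can build an essential shift'' is not a proof strategy so much as a restatement of the conclusion: the difficulty is exactly to extract a shiftable sequence of subsurfaces from a failure of self-similarity, and this requires the full order-theoretic analysis of $\Ends(S)$ (the partial order of \cite{mannrafi}, stability of maximal ends, tameness), none of which your sketch engages with. You flag this honestly as the ``main obstacle,'' but at that point you have deferred the entire content of the hard direction.

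In short: nothing in your outline is \emph{wrong}, and it correctly identifies the two directions and the role of \cref{prop:ros_crit}, but for the purposes of this paper the statement is a black-box citation and needs no proof; if you actually wanted to establish it, the sketch as written is several substantial lemmas short of one.
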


\begin{theorem}[{\cite[Thm. ~1.1(b)]{hill}}]\label{thm:hill} 
    Let $S$ be an infinite type surface. 
    Then $\PMap(S)$ is locally bounded if and only if it is boundedly generated
    if and only if $|\Ends(S)|<\infty$ 
    and $S$ is not a Loch Ness monster 
    with (non-zero) punctures.
\end{theorem}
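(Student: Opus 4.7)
The plan is to establish the triple equivalence via the cycle ``bounded generation $\Rightarrow$ locally bounded $\Rightarrow$ end condition $\Rightarrow$ bounded generation.'' The first implication is immediate from \cref{prop:gentoloc}, so two nontrivial implications remain.

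For ``locally bounded $\Rightarrow$ end condition,'' I would argue the contrapositive via \cref{prop:ros_crit}. Assume either $|\Ends(S)| = \infty$, or that $S$ is a Loch Ness monster with at least one puncture. In each case I would produce a continuous homomorphism from $\PMap(S)$ onto a Polish group that is not coarsely bounded, forcing failure of local boundedness. In the infinite-ends subcase, fix a compact essential $\Sigma$ and enumerate pairwise disjoint separating simple closed curves $\gamma_n \subset S \setminus \Sigma$ each bounding a complementary region containing a distinct end; the Dehn-twist coefficient along each $\gamma_n$ descends (on pure classes) to a continuous surjection $\PMap(S) \twoheadrightarrow \bigoplus_n \Z$, whose target is not coarsely bounded. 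In the Loch Ness-with-punctures subcase, fix a puncture $p$ and instead use curves $\alpha_n$ encircling $p$ and winding through the first $n$ handles of the Loch Ness structure; the resulting twist invariants give an analogous unbounded target. In either case no identity neighborhood $\nu_\Sigma$ can be coarsely bounded, since the preimage of an unbounded subgroup under a continuous bornologous map is unbounded.

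For ``end condition $\Rightarrow$ bounded generation,'' I would proceed by a case analysis on the end structure of $S$. The hypothesis restricts $S$ to a short list---roughly, finite genus with finitely many ends, pure Loch Ness, or infinite genus with finitely many ends and no puncture accumulating on a unique handle end. For each case I would identify a compact essential subsurface $\Sigma$ whose $\PMap(S)$-stabilizer $\nu_\Sigma$ is coarsely bounded, and then realize $\PMap(S)$ as generated by $\nu_\Sigma$ together with finitely many handle shifts---one per Mann--Rafi equivalence class of non-planar ends. Coarse boundedness of $\nu_\Sigma$ would follow from density of $\PMap_c(S)$ inside $\overline{\PMap_c(S)}$ \`a la Patel--Vlamis combined with Alexander-method-style rigidity for the finite-type piece of $\Sigma$. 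The main obstacle is precisely this verification: the excluded surface (Loch Ness with puncture) is exactly the configuration where this fails, since a puncture accumulating on the handle end produces infinitely many non-equivalent puncture-push maps trapped in every $\nu_\Sigma$. Showing that this is the \emph{only} obstruction---and that the surgery needed to produce the finitely many handle shifts terminates---is where the bulk of the work lies.
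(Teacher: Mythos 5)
This statement is quoted from Hill's paper \cite[Thm.~1.1(b)]{hill}; the present paper records it without proof, so there is no in-paper argument to compare against. Taking your proposal on its own terms, the central device in the ``locally bounded $\Rightarrow$ end condition'' direction does not exist: there is no continuous homomorphism $\PMap(S) \twoheadrightarrow \bigoplus_n \Z$ built from ``Dehn-twist coefficients along $\gamma_n$.'' A twist coefficient along a fixed curve is defined only relative to a marking, is not conjugation-invariant, and is not additive under composition, so it is not a group homomorphism. The integral characters on $\PMap(S)$ that actually exist are the Aramayona--Patel--Vlamis flux homomorphisms \cite{apv}; they are indexed by ends accumulated by \emph{genus} (not by ends in general), take values in $\Z^{n-1}$ or the Baer--Specker group $\Z^{\N}$ rather than a direct sum, and vanish identically when $S$ has at most one genus end. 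In particular they carry no information for a surface with infinitely many planar ends and at most one non-planar end, a configuration your argument is obliged to cover. The same defect recurs in the Loch Ness-with-punctures subcase: ``twist invariants'' along curves winding through the handles are not homomorphisms either. A correct argument has to attack the stabilizers $\nu_\Sigma$ directly, e.g.\ via \cref{prop:ros_crit} and the APV semidirect-product decomposition of $\PMap(S)$ over $\overline{\PMap_c(S)}$, rather than through an abelian quotient.

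The converse direction is closer in spirit to what works, but as written it is a plan rather than a proof. The load-bearing claim---that under the end condition some compact $\Sigma$ has $\nu_\Sigma$ coarsely bounded in $\PMap(S)$---is strictly harder than the analogous statement for $\Map(S)$, because $\PMap(S)$ cannot shrink orbits by permuting ends, and you defer exactly this as ``the bulk of the work.'' Density of $\PMap_c(S)$ plus Alexander-method rigidity does not by itself separate the forbidden Loch Ness-with-punctures surface from, say, Jacob's ladder, which also satisfies $|\Ends(S)|<\infty$; articulating and exploiting that distinction is the substance of Hill's theorem and cannot be left implicit.
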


\begin{remark}
  The authors are unaware of any work concerning the local
boundedness of $\overline{\PMap_c(S)}$. 
\end{remark}

\section{Witness-cocompactness}\label{sec:ccpt}

We discuss \textit{cocompact} and \textit{witness-cocompact} arc and curve
models and sketch the proof of \cref{thm:ccpt_asdim}, which
we will use in \cref{sec:nondisp} to compute the asymptotic
dimension of certain locally bounded surface mapping class groups.
This section summarizes the results of
\cite{ga_asdim}, to which we direct the reader for full detail; it
is included here for convenience.

\subsection{Cocompact arc and curve models}\label{sec:ccpt_gm}

Let $S$ be a surface of arbitrary topological type and let 
$\mathcal{K}(S) \dfn K(V(\mathcal{AC}(S)))$ denote the set of finite collections of
simple arcs and curves on $S$.  Note the arcs and curves
in $u \in \mathcal{K}(S)$ need not be pairwise disjoint.  

\begin{definition}\label{def:ccpt}
  A \textit{(metric) arc and curve model for $G \leq \Map(S)$} is a connected 
  (metric) graph $\G$ with discrete $V(\G) \subset \mathcal{K}(S)$ 
  that admits an action of $G$ induced by the
  permutation of its vertices.  $\G$ is \textit{cocompact} if 
  this action is cocompact.
\end{definition}

\begin{remark*}
  Throughout \cref{sec:ccpt}, a (metric) arc and curve model \textit{on
  $S$} will mean a (metric) arc and curve model for some $G \geq \PMap_c(S)$.
\end{remark*}

\begin{remark*}\label{rmk:ccptint}
If $S$ is finite-type, then 
\begin{enumerate*}[label=\textit{(\roman*)}]
  \item $\PMap_c(S) = \PMap(S)$ and
  \item
  $\G$ is cocompact if and only if $i(u,u)$ and $i(u,v)$ are
  uniformly bounded for $u \in V(\G)$ and $(u,v) \in E(\G)$.
\end{enumerate*}
\end{remark*}

\begin{definition}\label{def:witness} 
  Let $\G$ be an arc and curve model on $S$.  
  A compact, essential ($\pi_1$-injective, non-peripheral) subsurface
  $W \subset S$ is a \textit{witness} for $\G$ if 
  $W$ does not contain a pants component 
  and every $u \in V(\G)$ intersects every component of $W$.
\end{definition}

\noindent
We note that witnesses are not assumed to be connected.  Let $\X^\G$
denote the set of witnesses of $\G$, and $\xc^\G \subset \X^G$ the
subset of connected witnesses.  A \textit{witness set} on $S$
is any collection of compact, essential subsurfaces without pants
components closed under enlargement and the action of
$\PMap_c(S)$. 

By \cite{hhs_gen}, the geometry of cocompact arc and curve models on
finite-type surfaces is well understood.  In particular: 

\begin{theorem}[{\cite[Thm.~4.12]{ga_asdim}}]\label{thm:hhs}
  Let $\G$ be a cocompact arc and curve model on a finite-type surface
  $\Sigma$.  Then
  $(\G,\X^\G)$ is an asymphoric hierarchically hyperbolic space with
  respect to subsurface projection to witness curve graphs 
  $\pi_W : \G \to 2^{\cc W}$, $W \in \X^\G$.
  %$\cc W$, $W \in \X^\G$.
\end{theorem}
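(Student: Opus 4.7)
The plan is to apply a combinatorial hierarchically hyperbolic space criterion—such as that of Behrstock--Hagen--Martin--Sisto—to the pair $(\G, \X^\G)$. The hyperbolic space attached to each witness $W \in \X^\G$ is the curve graph $\cc W$, which is uniformly hyperbolic by work of Masur--Minsky together with the refinements to arc graphs by Hensel--Przytycki--Webb. For $u \in V(\G)$, define $\pi_W(u) \subset \cc W$ as the union of subsurface projections of the arcs and curves in $u$ that intersect $W$; the witness condition ensures $\pi_W(u) \neq \varnothing$. The nesting relation $W \sqsubseteq V$ is isotopic inclusion of essential subsurfaces, orthogonality $W \perp V$ is essential disjointness, and the remaining pairs are declared transverse.

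The first step is to verify that each $\pi_W$ has uniformly bounded-diameter image and is coarsely Lipschitz on $\G$. By \cref{rmk:ccptint}, cocompactness of the $G$-action on $\G$ yields uniform upper bounds both on the self-intersection $i(u,u)$ for $u \in V(\G)$ and on $i(u,v)$ for adjacent pairs $(u,v) \in E(\G)$. The standard Masur--Minsky-type bounds on subsurface projections of curves of bounded intersection then give uniform control on $\diam_{\cc W}(\pi_W(u))$ and on $d_{\cc W}(\pi_W(u), \pi_W(v))$ along edges of $\G$, and hence on all of $\G$ by concatenation.

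The remaining HHS axioms—bounded geodesic image, large link, partial realization, consistency, and uniqueness—can be reduced to the corresponding Masur--Minsky statements on $\Sigma$ by transporting data between $\G$ and the classical marking complex of $\Sigma$ via the $\pi_W$. Cocompactness of the action is crucial here: vertices of $\G$ are coarsely represented by markings of uniformly bounded complexity, so the combinatorial axioms can be lifted from the finite-type HHS structure on $\Map(\Sigma)$. Finally, asymphoricity is immediate from finite-typeness, since any chain of pairwise-orthogonal (hence pairwise-disjoint essential) subsurfaces of $\Sigma$ has length bounded by the complexity $\xi(\Sigma)$. The hardest step is likely the uniqueness axiom: one must promote coincidence of all projection data to small $\G$-distance, which I expect requires a variant of the Masur--Minsky distance formula combined with the cocompact action to control the preimages of projection tuples.
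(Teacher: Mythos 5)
This theorem is cited from \cite[Thm.~4.12]{ga_asdim} (with the underlying technology attributed to \cite{hhs_gen}); the present paper contains no proof of it, so there is no paper argument to compare yours against line by line. I can only assess your sketch on its own terms.

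Your broad outline is in the right spirit: use subsurface projections $\pi_W$ to witness curve graphs, control them via the uniform intersection bounds afforded by cocompactness (\cref{rmk:ccptint}), declare nesting/orthogonality/transversality in the obvious geometric way, and observe that any pairwise-orthogonal family of essential witnesses in a finite-type $\Sigma$ has size bounded by $\xi(\Sigma)$, which gives asymphoricity together with finite complexity. But two things need tightening.

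First, you open by invoking a \emph{combinatorial} HHS criterion in the style of Behrstock--Hagen--Martin--Sisto, yet everything that follows is a direct verification of the Behrstock--Hagen--Sisto axioms; the combinatorial criterion requires you to package $\G$ as a specific simplicial-complex-plus-graph data and verify a hyperbolicity-of-links condition, which you never set up. You should either drop that reference or actually build the combinatorial HHS data.

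Second, and more substantially, the ``transport the remaining axioms from the marking complex via the $\pi_W$'' step is a genuine gap. The HHS structure on $\Map(\Sigma)$ uses \emph{all} essential subsurfaces as domains, whereas $(\G,\X^\G)$ uses only witnesses. A vertex $u\in V(\G)$ with bounded self-intersection does not determine a marking: its projections to non-witness subsurfaces are entirely uncontrolled (indeed may be empty), and distinct completions to full markings can be far apart in the marking complex. Consequently the uniqueness axiom does \emph{not} descend from the marking complex: the marking-complex uniqueness statement needs agreement on \emph{all} subsurfaces, while the axiom you must prove needs agreement only on witnesses to force closeness in $\G$. Establishing this is essentially equivalent to proving the Masur--Minsky-style distance formula for $\G$ over $\X^\G$, which is the main content of the theorem, not a corollary of it. You flag uniqueness as ``likely the hardest step,'' and you are right---but the fix you gesture at (combining a distance formula with cocompactness) is circular, since the distance formula is downstream of the HHS axioms. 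The standard non-circular route in the literature is to verify the axioms directly using Masur--Minsky hierarchy machinery along the lines of Vokes's work on graphs of multicurves (or, equivalently, to exhibit $\G$ as a factored space of the marking graph in the sense of Behrstock--Hagen--Sisto and independently verify the quasi-isometry), rather than to hope the axioms lift from $\Map(\Sigma)$ through the projections alone.
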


\noindent
The $\PMap(\Sigma)$-equivariant geometry of $\G$ is uniquely
determined by $\hat \X^\G$: 

\begin{theorem}[{\cite[Thm.~4.13]{ga_asdim}}]\label{thm:map_ccpt_wit}
  %Let $\xc^\G$ denote the subset of connected witnesses for
  %$\G$.  
  The map $\G \mapsto \xc^\G$ induces a bijection between
  equivariant quasi-isometry types of cocompact arc and curve models on
  $\Sigma$ and connected witness sets on $\Sigma$.
\end{theorem}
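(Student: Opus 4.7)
The plan is to decompose the bijection into three assertions --- well-definedness, injectivity, and surjectivity --- and leverage the HHS structure from \cref{thm:hhs} to handle the first two. Throughout, I would work with $\PMap_c(\Sigma)$-equivariance, since the statement concerns equivariant quasi-isometry types.

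First, to see that the map $\G \mapsto \xc^\G$ descends to equivariant QI types and is injective, observe that $\X^\G$ is recovered from $\xc^\G$: any witness $W$ decomposes into its connected components $W_1,\dots,W_k$, each of which is again a witness (since $u$ intersects every component of $W$ by definition), and conversely any disjoint union of connected witnesses is a witness provided it contains no pants component. Hence $\X^\G$ is determined by $\xc^\G$. Now \cref{thm:hhs} makes $(\G,\X^\G)$ an asymphoric HHS with hyperbolic factors $\cc W$ for $W \in \X^\G$; the HHS distance formula expresses $d_\G(u,v)$ (up to multiplicative and additive constants) as a threshold-sum of subsurface projections $d_{\cc W}(\pi_W u,\pi_W v)$. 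Since subsurface projections are $\PMap_c(\Sigma)$-equivariant and intrinsic to $\xc^\G$, any two models $\G,\G'$ with $\xc^\G = \xc^{\G'}$ share the same distance formula and are thus equivariantly quasi-isometric via the identity on the common underlying combinatorial index set --- in practice by comparing both to a standard ``HHS model'' built from the witness data.

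For surjectivity, I would construct a cocompact arc and curve model $\G$ realizing a given connected witness set $\hat\X$. Let $\X$ be the closure of $\hat\X$ under disjoint unions with no pants component; this is a witness set in the sense of \cref{def:witness}. Take $V(\G)$ to be the $\PMap_c(\Sigma)$-orbit of a single finite collection $u_0 \in \mathcal K(\Sigma)$ of simple arcs and curves chosen so that (i) $u_0$ meets every component of every $W \in \hat\X$, and (ii) no essential subsurface disjoint from $u_0$ and without a pants component lies outside $\X$ --- this can be arranged by filling $u_0$ sufficiently in the complement of $\hat\X$. Connect $u_0$ by edges to a finite generating orbit of small combinatorial moves (e.g.\ surgeries) and extend $\PMap_c(\Sigma)$-equivariantly; the resulting graph is connected, cocompact by construction, and has $\xc^\G = \hat\X$ by conditions (i) and (ii).

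The main obstacle will be surjectivity, specifically calibrating the choice of base vertex $u_0$ in condition (ii) so that the induced witness set is exactly $\X$ --- neither strictly smaller (which would fail if some component of $W \in \hat\X$ is missed) nor strictly larger (which would fail if $u_0$ leaves an uncovered essential subsurface outside $\X$). I would attack this by choosing $u_0$ to contain a ``pants-like'' spine in $\Sigma \setminus \bigcup_{W \in \hat\X}W$ that forbids extraneous witnesses, and to separately fill each component of each minimal element of $\hat\X$. The resulting model $\G$ then satisfies the hypotheses of \cref{thm:hhs}, closing the loop with the injectivity argument.
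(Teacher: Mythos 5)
The paper does not prove \cref{thm:map_ccpt_wit}; it is quoted from \cite[Thm.~4.13]{ga_asdim} without proof, so there is no in-paper argument to compare against. On its merits, your outline for injectivity --- recover $\X^\G$ from $\xc^\G$, then invoke the HHS distance formula of \cref{thm:hhs} --- is plausible and consistent with the functoriality remark following the statement, which records the canonical equivariant coarse Lipschitz maps $\iota : \G \to \G'$ whenever $\xc^{\G'} \subset \xc^\G$. But the phrase ``quasi-isometric via the identity on the common underlying combinatorial index set'' elides the need for an actual map $V(\G) \to V(\G')$: the distance formula controls distances once such a map is given, but it does not produce one, and the vertex sets of two distinct models need not overlap at all.

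The surjectivity argument has a genuine error: your condition \textit{(ii)} is reversed. You require that every essential non-pants subsurface disjoint from $u_0$ lie in $\X$. But by \textit{(i)}, $u_0$ meets every component of every $W \in \hat\X$, hence meets every $W \in \X$ (as $\X$ consists of disjoint unions of elements of $\hat\X$), so nothing in $\X$ is disjoint from $u_0$. Combined with \textit{(ii)}, this forces $u_0$ to intersect every essential non-pants subsurface, and since this persists over the entire $\PMap_c(\Sigma)$-orbit of any such subsurface, every connected essential non-pants subsurface is then a witness for $\G$; thus $\xc^\G$ is all of them rather than $\hat\X$. What you actually need is the opposite containment: for each connected essential non-pants $W \notin \hat\X$, some $\phi \in \PMap_c(\Sigma)$ must move $W$ off $u_0$, i.e.\ $u_0 \cap \phi W = \varnothing$. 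In other words $u_0$ must be kept \emph{small} away from $\hat\X$, whereas ``filling $u_0$ sufficiently in the complement of $\hat\X$'' pushes in exactly the wrong direction.
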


\begin{remark*}
  %$\G \mapsto \xc^\G$ 
  The above is functorial in the following
sense: whenever $\X^{\G'} \subset \X^\G$ (equivalently $\xc^{\G'} \subset
\xc^\G$), there is a canonical equivariant
coarsely surjective, coarse Lipschitz map $\iota: \G \to \G'$.
\end{remark*}

%\noindent
We note that \cref{thm:hhs} implies that cocompact $\G$ on a
finite-type surface is
$\delta$-hyperbolic if and %only if it has HHS rank $\nu = 1$, if and
only if it has no pair of disjoint, connected witnesses.  More
broadly, $\G$ admits a distance formula in the sense of
Masur--Minsky: there is some $K>0$ such that for any $u,v \in V(\G)$, 
\[
  d_\G(u,v) \approx \sum_{W \in \X^\G} [d_{\cc
  W}(\pi_W(a),\pi_W(b))]_K\:.
\]

\subsection{Subsurface projection}\label{sec:subproj}
Given a compact, essential, connected, non-pants subsurface
$\Sigma \subset S$, let $\mathcal{K}(S,\Sigma) \subset
\mathcal{K}(S)$ denote the
subset of collections containing an element that intersects $\Sigma$ essentially.
We construct a 
projection $\rho_\Sigma : \mathcal{K}(S,\Sigma) 
\to {\mathcal{K}(\Sigma)}$ as follows (see 
\cite[\S5.2]{curvenotes}). 
Let $\iota : \Sigma \hookrightarrow S$ be the inclusion map, 
let $p : S_\Sigma \to S$ be the 
covering space associated to $\pi_1(\Sigma) \cong \img \iota_* <
\pi_1(S)$, and let
let $\tilde \iota : \Sigma \hookrightarrow
S_\Sigma$ be the (unique) lift of $\iota$
into $S_\Sigma$.  Fix any homeomorphism $\sigma :
S_\Sigma \to \inter \Sigma \dfn \Sigma \setminus \partial
\Sigma$ that is a homotopy inverse for
$\tilde \iota|_{\inter \Sigma}$; % and Lipschitz on $S_\Sigma$; 
note that $\sigma$ is unique up to homotopy, hence isotopy. Obtain 
$\tilde \sigma$ by composing $\sigma$ with the inclusion $\inter
\Sigma \hookrightarrow \Sigma$.
\[
\begin{tikzcd}
  & S_\Sigma \ar[d,"p"] 
  \ar[dl, bend right=35,"\tilde \sigma"'] \\ 
  \Sigma \rar[hook,"\iota"'] \ar[ur,hook,"\tilde \iota"] & S 
\end{tikzcd}
\]
Given $u \in \mathcal{K}(S,\Sigma)$,
let $\rho_\Sigma(u)$ 
be the closures of the 
non-peripheral components of $\tilde \sigma p^{-1}(u)$, up to
isotopy.  

One verifies that $\rho_\Sigma(u)$ 
is independent of the 
choice of representative for $\omega$ and $\sigma$.
  Likewise, $\rho_\Sigma$ is independent of the choice of embedding
  of $\Sigma$: if $\iota' : \Sigma \hookrightarrow
S$ is
isotopic to $\iota$, then the lift $\tilde\iota\,'$ is
isotopic to $\tilde \iota$ and thus $\sigma$ is likewise a homotopy
inverse for $\tilde \iota\,'|_{\inter \Sigma}$.

\begin{remark*}
  The definition here for $\rho_\Sigma$ differs slightly from that
  in \cite{ga_asdim}, which instead passes to the Gromov closure of
  $S_\Sigma$; however, the definitions are consistent.  We can
  likewise define $\rho_\Sigma(u)$ as the collection of essential
  intersections of $u$ with $\Sigma$.
\end{remark*}

The natural action of $\PMap(\Sigma)$ on $\mathcal{K}(\Sigma)$
defines an action of %\George{maybe this notation should be defined}
$\Map(\Sigma,\partial \Sigma)
\twoheadrightarrow \PMap(\Sigma)$.  Similarly, 
$\Map(\Sigma,\partial \Sigma) \curvearrowright
\mathcal{K}(S,\Sigma)$ via the homomorphism
$\Map(\Sigma,\partial \Sigma) \to \PMap_c(S)$ obtained by
extending by identity.

\begin{lemma}[{\cite[Lem.~4.14]{ga_asdim}}]\label{lem:projequi}
 $\rho_\Sigma: \mathcal{K}(S,\Sigma) \to \mathcal{K}(\Sigma)$
is $\Map(\Sigma,\partial \Sigma)$-equivariant.  
\end{lemma}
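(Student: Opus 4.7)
The plan is to trace an element $\phi\in\Map(\Sigma,\partial\Sigma)$ through each stage of the diagram defining $\rho_\Sigma$, show the induced diagrams commute up to the canonical homotopies, and then appeal to the fact that homotopic essential simple arcs and curves on a surface are isotopic. Let $\bar\phi\in\PMap_c(S)$ denote the extension of $\phi$ by the identity and fix $u\in\mathcal{K}(S,\Sigma)$; I want to show $\rho_\Sigma(\bar\phi(u))=\phi\cdot\rho_\Sigma(u)$ in $\mathcal{K}(\Sigma)$.

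First I would lift $\bar\phi$ to $S_\Sigma$. Since $\bar\phi$ restricts to $\phi$ on $\Sigma$ and to the identity off $\Sigma$, it preserves $\iota_*\pi_1(\Sigma)\leq\pi_1(S)$, so $\bar\phi\circ p$ lifts through $p$ to a homeomorphism $\tilde\phi:S_\Sigma\to S_\Sigma$ with $p\tilde\phi=\bar\phi p$. The lift is unique up to precomposition with a deck transformation in $\pi_1(\Sigma)$, so I fix the unique $\tilde\phi$ for which $\tilde\phi\tilde\iota(x_0)=\tilde\iota\phi(x_0)$ at a basepoint $x_0\in\Sigma$. Then $\tilde\phi\tilde\iota$ and $\tilde\iota\phi$ are both lifts of $\bar\phi\iota=\iota\phi:\Sigma\to S$ agreeing at $x_0$, so by uniqueness of lifts $\tilde\phi\tilde\iota=\tilde\iota\phi$.

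Next I would transfer this intertwining from $\tilde\iota$ to $\sigma$. Composing $\tilde\phi\tilde\iota=\tilde\iota\phi$ on the left with $\sigma$ and restricting to $\inter\Sigma$ yields $\sigma\tilde\phi\tilde\iota|_{\inter\Sigma}=\sigma\tilde\iota\phi|_{\inter\Sigma}\simeq\phi|_{\inter\Sigma}$, while $\phi\sigma\tilde\iota|_{\inter\Sigma}\simeq\phi|_{\inter\Sigma}$ directly. Composing both of these on the right with $\sigma$ and using $\tilde\iota|_{\inter\Sigma}\circ\sigma\simeq\Id_{S_\Sigma}$ gives $\sigma\tilde\phi\simeq\phi\sigma$ as maps $S_\Sigma\to\inter\Sigma$, and hence $\tilde\sigma\tilde\phi\simeq\phi\tilde\sigma:S_\Sigma\to\Sigma$. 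Since $p\tilde\phi=\bar\phi p$ we have $p^{-1}(\bar\phi(u))=\tilde\phi(p^{-1}(u))$, so
\[\tilde\sigma p^{-1}(\bar\phi(u))=\tilde\sigma\tilde\phi(p^{-1}(u))\simeq\phi\tilde\sigma(p^{-1}(u))\:.\]
Taking closures of non-peripheral components and invoking the classical homotopy-equals-isotopy result for essential simple arcs and closed curves on a surface yields $\rho_\Sigma(\bar\phi(u))=\phi\cdot\rho_\Sigma(u)$ in $\mathcal{K}(\Sigma)$. The principal obstacle is the bookkeeping around the lift: one must verify that the point-match condition on $\tilde\phi$ is consistent with its being a single well-defined lift of $\bar\phi$, and check that the resulting homotopy of multi-arc/curve images really respects the essential/peripheral decomposition, so that discarding peripheral components commutes with applying $\phi$.
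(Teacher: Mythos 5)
The paper does not prove this lemma itself; it is cited directly from \cite[Lem.~4.14]{ga_asdim}, so there is no in-paper argument to compare against. Your covering-space approach is the natural one and, as far as I can tell, correct in outline, so let me flag the points where the bookkeeping you worry about actually resolves.

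On the lift of $\bar\phi$: the cleanest way to see that $\tilde\phi$ exists and your point-match condition is consistent is to take the basepoint $x_0$ on $\partial\Sigma$. Since $\phi\in\Map(\Sigma,\partial\Sigma)$ fixes $\partial\Sigma$ pointwise, $\phi(x_0)=x_0$, so $\bar\phi_*$ genuinely preserves the based subgroup $\iota_*\pi_1(\Sigma,x_0)\le\pi_1(S,x_0)$ (not merely up to conjugacy), the lifting criterion for $\bar\phi\circ p$ through $p$ is satisfied with $\tilde y_0=\tilde\iota(x_0)$, and your normalization $\tilde\phi(\tilde\iota(x_0))=\tilde\iota(\phi(x_0))=\tilde\iota(x_0)$ picks out a unique lift. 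With the basepoint chosen in the interior this would require tracking a conjugation, so choosing $x_0\in\partial\Sigma$ is genuinely the right move, not just a convenience. The identity $\tilde\phi\tilde\iota=\tilde\iota\phi$ then follows exactly as you say, by uniqueness of lifts on a connected domain, and $\tilde\phi$ is a homeomorphism because it is a degree-one lift.

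On the final homotopy-to-isotopy step: your worry is well placed but dissolves once you notice that $\sigma\tilde\phi$ and $\phi|_{\inter\Sigma}\circ\sigma$ are both \emph{homeomorphisms} $S_\Sigma\to\inter\Sigma$, and your chain of homotopies shows they are homotopic. Homotopic homeomorphisms of a surface are isotopic---this is exactly the fact the paper already invokes when asserting ``$\sigma$ is unique up to homotopy, hence isotopy'' to make $\rho_\Sigma$ well-defined---so you may upgrade to an ambient isotopy of $\inter\Sigma$. That isotopy carries each component of $\sigma\tilde\phi\,p^{-1}(u)$ to the corresponding component of $\phi\sigma\,p^{-1}(u)$, and since peripherality is a homotopy-class property preserved by any homeomorphism, the non-peripheral components match up and closures are taken consistently. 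So the argument closes cleanly; the only substantive thing you left implicit is the basepoint choice on $\partial\Sigma$, and the only ``classical fact'' you need at the end is the one the definition of $\rho_\Sigma$ already depends on.
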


\begin{corollary}\label{cor:pbact}
  Let $\phi \in \PMap(\Sigma)$.  Then there exists $\psi
  \in \PMap_c(S)$ preserving $\mathcal{K}(S,\Sigma)$ 
  such that for any $\omega \in
  \mathcal{K}(S,\Sigma)$, $\phi\rho_\Sigma(\omega) =
  \rho_\Sigma(\psi \omega)$. 
\end{corollary}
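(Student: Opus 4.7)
The plan is to produce $\psi$ as the image of a boundary-fixing lift of $\phi$ under the extension-by-identity homomorphism already introduced in the paragraph preceding \cref{lem:projequi}, and then read off the displayed equality from the equivariance of $\rho_\Sigma$.

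First, I would invoke the standard surjection $\pi:\Map(\Sigma,\partial\Sigma)\twoheadrightarrow\PMap(\Sigma)$ (obtained by relaxing boundary-fixing to setwise preservation of each boundary component; its kernel is generated by Dehn twists about components of $\partial\Sigma$). Choose any lift $\tilde\phi\in\pi^{-1}(\phi)$. Letting $\psi\in\PMap_c(S)$ be the image of $\tilde\phi$ under the homomorphism $\Map(\Sigma,\partial\Sigma)\to\PMap_c(S)$ given by extension by identity across $S\setminus\Sigma$, I automatically have $\psi\in\PMap_c(S)$ since any compactly supported mapping class is pure.

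Next, since $\psi$ is supported in $\Sigma$ it preserves $\Sigma$ setwise as well as its complement, so the property of containing an arc or curve with essential intersection with $\Sigma$ is preserved under $\psi$; equivalently $\psi$ preserves $\mathcal{K}(S,\Sigma)$. Finally, for any $\omega\in\mathcal{K}(S,\Sigma)$, \cref{lem:projequi} together with the fact that the action of $\Map(\Sigma,\partial\Sigma)$ on $\mathcal{K}(\Sigma)$ factors through $\PMap(\Sigma)$ via $\pi$ gives
\[
\rho_\Sigma(\psi\omega)=\rho_\Sigma(\tilde\phi\cdot\omega)=\tilde\phi\cdot\rho_\Sigma(\omega)=\pi(\tilde\phi)\cdot\rho_\Sigma(\omega)=\phi\,\rho_\Sigma(\omega).
\]

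I do not anticipate any substantive obstacle: the statement is essentially a direct consequence of \cref{lem:projequi} once one has a lift of $\phi$ to $\Map(\Sigma,\partial\Sigma)$. The only point worth flagging is the existence of such a lift, i.e.\ surjectivity of the capping map $\Map(\Sigma,\partial\Sigma)\twoheadrightarrow\PMap(\Sigma)$, which is classical for compact surfaces and may be cited rather than reproved.
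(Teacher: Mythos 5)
Your proof is correct and is exactly the argument the paper has in mind: lift $\phi$ through the surjection $\Map(\Sigma,\partial\Sigma)\twoheadrightarrow\PMap(\Sigma)$, push the lift into $\PMap_c(S)$ by extension by identity, and read off the equality from the $\Map(\Sigma,\partial\Sigma)$-equivariance of $\rho_\Sigma$ in \cref{lem:projequi} together with the fact that the action on $\mathcal{K}(\Sigma)$ factors through $\PMap(\Sigma)$. The paper leaves this implicit, but the two actions of $\Map(\Sigma,\partial\Sigma)$ are set up in the preceding paragraph precisely to make the corollary immediate, just as you argue.
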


\subsubsection{Witness-cocompactness}\label{sec:witness_gm}  Let $W
\subset S$ be a connected witness for an arc and curve
model $\G$ on $S$.  Note that $V(\G) \subset
\mathcal{K}(S, W)$ and obtain an arc and curve model $\G_W$ on $W$ as
follows: let $V(\G_W) = \rho_W(V(\G) \subset \mathcal{K}(W)$, and
obtain $E(\G_W)$ as the push-forward of the edge relation  on $\G$ by
$\rho_W$.  By \cref{cor:pbact} $\PMap(W)$ acts on $\G_W$ by
permuting its vertices and the map $\rho_W : \G \to \G_W$ is
$\Map(W,\partial W)$-equivariant; since $\G$ is connected, likewise
is $\G_W$. If $\G$ is a metric graph, then likewise push
forward the edge lengths on $\G$ to obtain a metric on $\G_W$; in
either case, $\rho_W$ is $1$-Lipschitz. 

\begin{definition}\label{def:witnessccpt}
  Let $\G$ be a connected (metric) arc and curve model on $S$.  Then $\G$ is
  \textit{witness-cocompact} if
  \begin{enumerate}
    \item $\G$ has a (compact) witness; and
    \item for every witness $W \subset S$, $\G_W$ is cocompact.
  \end{enumerate}
\end{definition}

\begin{remark*}\label{rmk:wccptint}
  From \cref{rmk:ccptint}, it follows that $\G$ is
  witness-cocompact if and only if $\G$ has a witness and for any
  witness $W$ there is a uniform bound on $i(\rho_W(u),\rho_W(u))$
  and $i(\rho_W(u),\rho_W(v))$ for $u \in V(\G)$ and $(u,v) \in
  E(\G)$.
\end{remark*}

\begin{lemma}\label{lem:eqisec}
  Let $\G$ be a witness-cocompact arc and curve model and let $W$ be a
  witness.  Then any $\Map(W,\partial W)$-equivariant section
  $\sigma_W : V(\G_W) \to V(\G)$ is a quasi-isometric embedding.
  \end{lemma}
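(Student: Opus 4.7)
The plan is to establish the two required bounds for $\sigma_W$ separately. The lower bound is essentially free, while the upper bound will come from the cocompactness of $\G_W$ combined with the equivariance of $\sigma_W$ under a group acting on $\G$ by isometries.

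For the lower bound, I will use that $\rho_W : \G \to \G_W$ is $1$-Lipschitz and that $\sigma_W$ is a set-theoretic section, i.e., $\rho_W \circ \sigma_W = \mathrm{Id}_{V(\G_W)}$. Then for any $u, v \in V(\G_W)$,
\[
  d_{\G_W}(u,v) = d_{\G_W}(\rho_W \sigma_W(u), \rho_W \sigma_W(v)) \leq d_\G(\sigma_W(u), \sigma_W(v))\:,
\]
giving the $(1,0)$ lower bound without further work.

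For the upper bound, I would argue as follows. By witness-cocompactness $\G_W$ is cocompact, so the $\Map(W,\partial W)$-action on $\G_W$ (via $\Map(W,\partial W) \twoheadrightarrow \PMap(W)$) has finitely many edge orbits. Pick representatives $(u_i,v_i)$, $1 \leq i \leq k$, and set $M \dfn \max_i d_\G(\sigma_W(u_i), \sigma_W(v_i))$. Given any edge $(u,v) \in E(\G_W)$, there is $\phi \in \Map(W,\partial W)$ with $(\phi u_i, \phi v_i) = (u,v)$. Via extension by identity, $\phi$ acts as an element of $\PMap_c(S)$ on $V(\G) \subset \mathcal{K}(S)$, and since $\G$ is a model for some $G \geq \PMap_c(S)$, this element acts by isometries on $\G$. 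Combined with the $\Map(W,\partial W)$-equivariance of $\sigma_W$,
\[
  d_\G(\sigma_W(u), \sigma_W(v)) = d_\G(\phi \sigma_W(u_i), \phi \sigma_W(v_i)) = d_\G(\sigma_W(u_i), \sigma_W(v_i)) \leq M\:.
\]
Hence $\sigma_W$ sends $\G_W$-edges to paths in $\G$ of uniformly bounded length.

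In the metric case, cocompactness further implies that the finitely many edge-orbit representatives have lengths bounded below by some $\ell_{\min} > 0$, so a $\G_W$-geodesic from $u$ to $v$ of length $d = d_{\G_W}(u,v)$ has at most $\lceil d / \ell_{\min}\rceil$ edges, and concatenating the above estimate along this path gives $d_\G(\sigma_W(u),\sigma_W(v)) \leq (M/\ell_{\min})\, d + M$, yielding the desired upper bound. The only genuinely delicate point is the passage from $\Map(W,\partial W)$-equivariance on $V(\G_W)$ to isometric action on $\G$; this is handled precisely by \cref{cor:pbact}, which guarantees that elements of $\PMap(W)$ are realized (equivariantly with respect to $\rho_W$) by elements of $\PMap_c(S) \leq G$, and these act on $\G$ by isometries by hypothesis.
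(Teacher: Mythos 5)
Your proof is correct and follows essentially the same line as the paper's: the $1$-Lipschitz property of $\rho_W$ gives the lower bound, cocompactness of $\G_W$ plus equivariance of $\sigma_W$ bounds the $\G$-distance across any $\G_W$-edge, and concatenation along geodesics gives the coarse-Lipschitz upper bound. The paper states this more tersely (it says it suffices that $\sigma_W$ is Lipschitz and immediately declares it ``coarsely $L$-Lipschitz''), while you spell out the passage through \cref{cor:pbact} and the positive lower bound $\ell_{\min}$ on edge lengths in the metric case, which is a worthwhile clarification but not a genuinely different route.
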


\begin{proof}
  Since $\rho_W$ is Lipschitz, it suffices to show that $\sigma_W$
  is likewise Lipschitz.
  Since $\G_W$ is
  cocompact, it has finitely many orbits of edges $(\bar u,\bar v)
  \in E(\G_W)$. Since $\sigma_W$ is equivariant, there are likewise
  finitely many orbits of pairs $(\sigma_W(\bar u), \sigma_W(\bar
  v))$. Let $L$ be the maximum of the distances
  $d_\G(\sigma_W(\bar u),\sigma_W(\bar v))$ for $(\bar u,\bar v) \in
  E(\G_W)$. Then $\sigma_W$ is coarsely $L$-Lipschitz.
\end{proof}

\begin{remark}
  If $\G$ is witness-cocompact, then for each witness $W$, $\G_W$ is
  cocompact: up to quasi-isometry, we may endow
  $\G_W$ with the usual simplicial metric.
\end{remark}

\subsection{Asymptotic dimension lower bounds}\label{sec:asdim_ccpt}

We sketch the arguments from \cite{ga_asdim} to prove
\cref{thm:ccpt_asdim}.  We begin by computing lower bounds
for the asymptotic dimension of cocompact arc and curve models on
finite-type surfaces.

\subsubsection{For finite-type surfaces}\label{sec:asdim_ft}

Let $\Sigma$ be a finite-type surface with a cocompact arc and curve
model $\M$.  We aim to show the following:

\begin{theorem}[{\cite[Thm.~4.21]{ga_asdim}}]\label{thm:cm_hyp}
  Let $\Sigma$ be a genus $g$ finite-type surface.
  If $\M$ is a (non-empty) $\delta$-hyperbolic cocompact
   arc and curve model on $\Sigma$, then $\asdim \M \geq g -
  \lceil \frac 12 \chi(\Sigma) \rceil$. 
\end{theorem}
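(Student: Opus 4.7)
The plan is to bound $\asdim \M$ below by combining the hierarchically hyperbolic structure from $\cref{thm:hhs}$ with a Gromov boundary estimate for $\delta$-hyperbolic spaces. The hyperbolicity hypothesis, via the Masur--Minsky distance formula that is part of the HHS package, is equivalent to the condition that no two connected witnesses of $\M$ are disjoint on $\Sigma$; that is, the connected witness set $\xc^\M$ is closed under enlargement and its elements pairwise intersect. In particular $\xc^\M$ contains a distinguished maximal connected filling witness $W \subset \Sigma$.

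I would first use $\cref{thm:map_ccpt_wit}$ together with the equivariant quasi-isometric section of $\cref{lem:eqisec}$ to reduce, up to $\PMap_c(\Sigma)$-equivariant quasi-isometry, to a canonical hyperbolic arc and curve model on $W$. Since the restricted witness set $\xc^{\M_W}$ is also pairwise intersecting, this canonical model collapses equivariantly to the curve graph $\cc(W)$; its Gromov boundary is Klarreich's space of ending laminations $\mathcal{EL}(W)$. The section $\sigma_W : \M_W \hookrightarrow \M$ then transports any asymptotic-dimensional lower bound on $\cc(W)$ back to $\M$ via $\cref{prop:coarse_inv_asdim}$.

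Finally, I would invoke the inequality $\asdim X \geq \dim \partial_\infty X + 1$ for visual $\delta$-hyperbolic spaces (Buyalo--Lebedeva) together with Gabai's computation of $\dim \mathcal{EL}$ to bound $\asdim \cc(W)$ below by an explicit function of the complexity of $W$. A careful tally of the genus, puncture, and boundary contributions when passing from $W$ back to $\Sigma$—in particular how witness enlargements pair up punctures and boundary components in counts of $\lfloor n/2 \rfloor$—then yields the stated lower bound $g - \lceil \tfrac12 \chi(\Sigma) \rceil$. The main obstacle is the reduction step: a general hyperbolic cocompact arc and curve model need not literally be a curve graph, and constructing the canonical equivariant quasi-isometry to $\cc(W)$ requires the full HHS classification of $\cref{thm:map_ccpt_wit}$ to rule out pathological witness sets; a secondary technical point is the arithmetic matching of the puncture- and boundary-pairing counts so that the boundary-dimension estimate delivers exactly $g - \lceil \tfrac12 \chi(\Sigma) \rceil$ rather than a weaker bound.
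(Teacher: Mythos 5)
Your outline diverges from the paper's argument at a point that is in fact a genuine gap, not just a different route. You claim that because the witness set $\xc^{\M}$ is pairwise intersecting, the model (or its restriction $\M_W$ to a maximal witness $W$) ``collapses equivariantly to the curve graph $\cc(W)$.'' That is false in general: \cref{thm:map_ccpt_wit} says the equivariant quasi-isometry type is \emph{determined} by the connected witness set, and the witness set of $\cc(\Sigma)$ is just $\{\Sigma\}$, while a hyperbolic $\M$ can have a much larger (pairwise-intersecting) witness set and hence a strictly different quasi-isometry type. The paper never claims $\M \simeq \cc\Sigma$; it uses the weaker and more subtle fact that the canonical coarsely surjective, coarsely Lipschitz map $\iota : \M \to \cc\Sigma$ (coming from $\X^{\cc\Sigma} = \{\Sigma\} \subset \X^\M$) is \emph{coarsely alignment preserving} (\cref{lem:canon_align}, proved via hierarchy paths and the distance formula), and then invokes Dowdall--Taylor (\cref{thm:dtboundaries}) to get an \emph{embedding} $\partial\cc\Sigma \hookrightarrow \partial\M$ — not a homeomorphism. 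This alignment-preserving step is the heart of the argument and is missing from your proposal; without it, there is no way to relate $\partial\M$ to $\partial\cc\Sigma$.

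A second gap: Gabai's theorem computes $\partial\cc\Delta$ only for punctured spheres $\Delta$, so it does not directly produce the stated bound when $\Sigma$ has genus. The paper handles this with Rafi--Schleimer (\cref{prop:elSembed}), embedding $\partial\cc\Delta$ for a punctured sphere $\Delta$ of the right complexity into $\partial\cc\Sigma$, and then using the universal embedding property of N\"obeling spaces to produce a compactum $Z \subset \partial\cc\Sigma$ with $\dim Z = g - 1 - \lceil\tfrac12\chi(\Sigma)\rceil$. Your ``careful tally'' of contributions when passing from $W$ back to $\Sigma$ is also confused: since $\M$ is non-empty, the maximal connected witness is $\Sigma$ itself, so there is nothing to pass back. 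Finally, a smaller point: the Buyalo--Lebedeva inequality $\asdim X \ge \dim\partial X + 1$ as usually stated requires $X$ proper, which $\M$ is not; the paper substitutes the variant \cref{prop:bdry_cpt}, which only needs a compact $Z \subset \partial X$ and gives $\asdim X \ge \dim Z + 1$. You would need this formulation rather than the standard one.
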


\begin{remark*}\label{rmk:flats}
  In the complementary case, when $\M$ is not $\delta$-hyperbolic or
equivalently when $\M$ has $\nu > 1$ disjoint connected
witnesses, it will suffice  that $\asdim \M \geq \nu$.  In
particular, $\nu$ is exactly the HHS rank of $(\M,\X^\M)$, which
bounds $\asdim \M$ from below \cite[Thm.~1.15]{quasiflats}.
\end{remark*}

We prove \cref{thm:cm_hyp} by finding a compact subspace $Z \subset
\partial \M$ of known topological dimension.  For proper
$\delta$-hyperbolic spaces, the topological dimension of the
boundary gives bounds on the asymptotic dimension of the space
\cite[Prop.~6.2]{bleb}; while $\M$ is typically non-proper, a minor adaptation of
the lower bound suffices. 

\begin{proposition}[{\cite[Prop.~2.5]{ga_asdim}}]\label{prop:bdry_cpt}
  Let $X$ be a geodesic $\delta$-hyperbolic space with $Z \subset
  \partial X$ compact.  Then $\asdim X \geq \dim Z + 1$.
\end{proposition}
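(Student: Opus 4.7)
The plan is to reduce to the Buyalo--Lebedeva lower bound for proper hyperbolic spaces (the proper case of \cite[Prop.~6.2]{bleb}): if $Y$ is proper, geodesic and $\delta$-hyperbolic, then $\asdim Y \geq \dim \partial_\infty Y + 1$. Given this, it suffices to produce a proper hyperbolic subspace $Y \subset X$ whose Gromov boundary contains $Z$; then monotonicity of asymptotic dimension under the isometric inclusion (\cref{prop:coarse_inv_asdim}) will yield $\asdim X \geq \asdim Y \geq \dim Z + 1$.

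To construct such a $Y$, fix a basepoint $x_0 \in X$ and for each $\xi \in Z$ choose a geodesic ray $\gamma_\xi : [0,\infty) \to X$ from $x_0$ to $\xi$. I would take $Y$ to be the closure in $X$ of $\bigcup_{\xi \in Z} \gamma_\xi([0,\infty))$, equipped with the subspace metric. Since the ideal triangle with vertices $x_0$, $\gamma_\xi(\infty)$, $\gamma_\eta(\infty)$ is $\delta$-thin, every $X$-geodesic between points of $\bigcup_\xi \gamma_\xi$ lies in the $\delta$-neighborhood of $Y$; thus $Y$ is $\delta$-quasi-convex in $X$ and hence hyperbolic in its subspace metric. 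Each $\xi \in Z$ is visibly a Gromov boundary point of $Y$, realized by $\gamma_\xi$, so $Z \subset \partial_\infty Y$.

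The main step --- and the main obstacle --- is establishing that $Y$ is proper, which requires carefully transferring compactness of $Z$ in the visual topology of $\partial X$ into finite coverings of initial ray-segments. Compactness furnishes, for each $R > 0$, finitely many $\xi_1, \ldots, \xi_n \in Z$ such that every $\xi \in Z$ satisfies $(\xi \mid \xi_i)_{x_0} \geq R + 2\delta$ for some $i$. By $\delta$-hyperbolicity the initial segments $\gamma_\xi([0,R])$ and $\gamma_{\xi_i}([0,R])$ then $2\delta$-fellow-travel, so $Y \cap \overline{B_R(x_0)}$ is contained in the closed $2\delta$-neighborhood of the compact set $\bigcup_{i=1}^n \gamma_{\xi_i}([0,R])$, and therefore is compact. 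Applying the (modestly adapted) Buyalo--Lebedeva bound to the proper hyperbolic $Y$ yields $\asdim Y \geq \dim \partial_\infty Y + 1 \geq \dim Z + 1$, completing the proof.
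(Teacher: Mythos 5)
The reduction you attempt — build a proper quasi-convex subspace $Y$ whose boundary contains $Z$ and then invoke the Buyalo--Lebedeva bound for proper spaces — is reasonable in spirit, but the properness argument has a genuine gap at the crucial last step. You show that $Y \cap \overline{B_R(x_0)}$ lies in the closed $2\delta$-neighborhood of a compact set $K = \bigcup_{i=1}^n \gamma_{\xi_i}([0,R])$, and conclude that it is ``therefore compact.'' This inference is false: a closed subset of a $2\delta$-neighborhood of a compact set in a non-proper metric space need not be compact, because the $2\delta$-balls around points of $K$ are themselves non-compact. What would be needed is total boundedness of $Y \cap \overline{B_R(x_0)}$, i.e.\ finite $\epsilon$-nets for \emph{every} $\epsilon > 0$, and your argument produces only a finite $(2\delta + \epsilon')$-net.

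Moreover, the $2\delta$ here is not an artefact of a loose estimate that could be tightened. In a general $\delta$-hyperbolic space, two geodesic rays from $x_0$ with arbitrarily large Gromov product still only $2\delta$-fellow-travel; unlike in trees or CAT($-1$) spaces there is no exponential contraction forcing the rays to converge pointwise. One can build a thickened locally-infinite tree in which every geodesic ray to a compact boundary set passes through uncountably many pairwise-$2\delta$-separated ``parallel'' vertices near the basepoint, so that an arbitrary choice of rays $\gamma_\xi$ yields a non-proper $Y$. This means properness of $Y$ would at least require a careful, compatible choice of rays (choosing representatives that actually \emph{coincide} on long initial segments), and establishing that such a choice exists is itself a nontrivial extra step your write-up does not address. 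There is also a smaller issue that $Y$ with the restricted metric is only quasi-geodesic, not geodesic, so invoking \cite[Prop.~6.2]{bleb} directly requires passing to a geodesic model of $Y$ first. The paper's actual route sidesteps the reduction to a proper subspace altogether: it adapts the Buyalo--Lebedeva covering argument to work with the compact subset $Z \subset \partial X$ in place of the full boundary, which is why the text describes the modification as ``minor.''
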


We find $Z$ as follows.  Recall that, whenever $\M$ and $\M'$ are cocompact
graph models on $\Sigma$ and $\X^{\M} \supset
\X^{\M'}$, there is a canonical coarsely surjective, coarsely
Lipschitz map $\iota : \M \to \M'$.  In particular, since $\X^{\cc
\Sigma} = \{\Sigma\}$, such a map $\iota : \M \to \cc \Sigma$ 
exists for any cocompact graph model $\M$.  We first prove that
when $\M$ is $\delta$-hyperbolic 
these maps are coarsely alignment preserving in the sense
of Dowdall--Taylor \cite{align}:  there exists $K$ for $\M,\M'$ such that 
for any aligned triple of vertices $(x,y,z) \in
V(\M)^3$, $d(\iota(x),\iota(y)) + d(\iota(y),\iota(z)) \leq
d(\iota(x),\iota(z)) + K$.

\begin{lemma}[{\cite[Lem.~4.22]{ga_asdim}}]\label{lem:canon_align}
  Let $\M$ and $\M'$ be arc and curve models on a finite-type surface
  such that $\X^\M \supset \X^{\M'}$, and let $\iota : \M
  \to \M'$ be the canonical coarse surjection.  If $\M$ is
  $\delta$-hyperbolic, then $\iota$ is
  coarsely alignment-preserving.
\end{lemma}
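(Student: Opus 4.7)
The plan is to use the hierarchically hyperbolic structure on $\M$ and $\M'$ provided by \cref{thm:hhs} to transfer alignment from $\M$ through the subsurface projections to each $\cc W$, and then reassemble it as alignment in $\M'$ via the Masur--Minsky distance formula. Since $\X^{\M'} \subset \X^\M$ and $\M$ is $\delta$-hyperbolic (equivalently, the HHS has rank $1$, i.e.\ no two witnesses in $\X^\M$ are disjoint), the same disjointness-free condition holds for $\X^{\M'}$, so $\M'$ is itself $\delta$-hyperbolic and admits a distance formula over $\X^{\M'}$.

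Fix an aligned triple $(x,y,z) \in V(\M)^3$. The key intermediate step is \emph{pointwise alignment of projections}: for each $W \in \X^\M$, the triple $(\pi_W(x),\pi_W(y),\pi_W(z))$ is coarsely aligned in $\cc W$ with a constant uniform in $W$. To establish this, I would choose a geodesic $\gamma$ from $x$ to $z$ in $\M$ passing within $O(\delta)$ of $y$ and invoke the standard fact — following from the distance formula for $\M$ together with hyperbolicity of $\cc W$ — that $\pi_W \circ \gamma$ is an unparametrized quasi-geodesic in $\cc W$ with uniform constants. Hence $\pi_W(y)$ lies within uniform distance of a geodesic from $\pi_W(x)$ to $\pi_W(z)$ in $\cc W$, yielding
\[
d_{\cc W}(\pi_W(x),\pi_W(y)) + d_{\cc W}(\pi_W(y),\pi_W(z)) \leq d_{\cc W}(\pi_W(x),\pi_W(z)) + C
\]
for a uniform constant $C$ independent of $W$ and of the triple.

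Next, restricting to $W \in \X^{\M'}$ and applying the distance formula for $\M'$, these pointwise inequalities should reassemble into an alignment bound for $(\iota(x),\iota(y),\iota(z))$ in $\M'$, modulo the thresholding $[\,\cdot\,]_K$ in the distance formula. For witnesses where all three projected distances exceed $K$, the transfer is essentially immediate. For the remaining ``threshold-crossing'' witnesses, the pointwise bound limits the additive error per witness to a uniform constant, and the alignment hypothesis in $\M$ together with the distance formula for $\M$ bounds the total number of such witnesses, giving a uniformly bounded aggregate error.

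The main obstacle I anticipate is this last threshold bookkeeping: converting pointwise additive alignment in each $\cc W$ into an additive alignment after summing over $W \in \X^{\M'}$ and cutting off small projections. The crux is showing that the cumulative threshold error is uniform in the choice of aligned triple, which should reduce to bounding the number of ``bad'' witnesses via the distance formula for $\M$ applied to the alignment defect of $(x,y,z)$ in $\M$.
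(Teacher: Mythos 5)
Your high-level strategy matches the sketch the paper gives for this lemma: both pass to subsurface projections, use that hierarchy paths in $\M$ project to unparametrized quasi-geodesics in witness curve graphs, and invoke the distance formulas for $\M$ and $\M'$. However, two points in your write-up deserve a flag.

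First, the fact that $\pi_W \circ \gamma$ is a uniform unparametrized quasi-geodesic does \emph{not} ``follow from the distance formula for $\M$ together with hyperbolicity of $\cc W$.'' It is an input, not a consequence: it comes from the existence of hierarchy paths in HHSs (\cite[Thm.~4.4]{hhsii}), whose projections are unparametrized quasi-geodesics by construction, combined with $\delta$-hyperbolicity of $\M$ (so a geodesic from $x$ to $z$ fellow-travels some hierarchy path). This is exactly why the paper cites hierarchy paths rather than the distance formula at this step.

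Second, and more seriously, your reassembly plan is unlikely to close. You hope to bound the cumulative threshold error by ``bounding the number of bad witnesses via the distance formula for $\M$ applied to the alignment defect.'' But the Masur--Minsky distance formula is only a coarse equality with \emph{multiplicative} constants: alignment of $(x,y,z)$ in $\M$ only gives $\sum_{W\in\X^\M}\bigl([d_W(x,y)]_K+[d_W(y,z)]_K\bigr)\lesssim A^2\sum_{W\in\X^\M}[d_W(x,z)]_K + O(1)$, and restricting the left sum to $\X^{\M'}$ does not kill the $A^2$. More to the point, the number of $W\in\X^{\M'}$ with $d_W(x,z)\geq K$ grows linearly in $d_{\M'}(\iota x,\iota z)$, so a per-witness additive error of $O(1)$ gives an aggregate error growing with the triple, not a uniform $K$. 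The correct route --- implicit in the paper's mention of hierarchy paths \emph{in $\M'$} as well as in $\M$ --- is to not track the alignment defect additively through the distance formula at all, but instead to show $\iota(y)$ lies a \emph{bounded distance} from a hierarchy path (hence a geodesic) $\gamma'$ in $\M'$ from $\iota(x)$ to $\iota(z)$: the projections $\pi_W(y)$ lie uniformly close to $\pi_W(\gamma')$ for all $W\in\X^{\M'}$ (via the fellow-travelling of $\pi_W\gamma$ and $\pi_W\gamma'$), and then the realization/consistency machinery of $\M'$ as a hyperbolic HHS, not raw summation of thresholded projections, produces a single point on $\gamma'$ uniformly close to $\iota(y)$. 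Once $\iota(y)$ is within bounded distance of a geodesic $[\iota x,\iota z]$, the alignment bound with an additive constant is immediate from the triangle inequality, with no multiplicative loss.
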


\noindent
The proof of \cref{lem:canon_align} follows from the existence of
hierarchy paths \cite[Thm.~4.4]{hhsii} in $\M,\M'$. Such paths are close to
geodesics and have projections
to witness curve graphs that are unparameterized quasi-geodesics.
Applying the distance formulas for $\M,\M'$ derives the claim.

Crucially, the theory of alignment preserving maps implies an
embedding of $\partial \M'$ into $\partial \M$, and in particular an
embedding $\partial \cc \Sigma \hookrightarrow \partial \M$ whenever $\M$ is
$\delta$-hyperbolic.

%Suppose that $f : X \to Y$ is a coarsely alignment preserving map
%between geodesic 
%$\delta$-hyperbolic spaces.  Then we define $\partial_Y X
%\subset \partial X$ to be 
%\[ \partial_Y X \dfn \{[\gamma] \in \partial X \;\vline\;
%\gamma : \R^+ \to X \text{ quasi-geodesic, } 
%\diam_Y (f\gamma (\R^+)) = \infty \}.\]

%\begin{theorem}[Dowdall--Taylor, {\cite[Thm.~3.2]{align}}]
  %\label{thm:dtboundaries}
  %Let $f : X \to Y$ be a coarsely surjective, coarsely alignment
  %preserving map between geodesic 
  %$\delta$-hyperbolic spaces.  Then $f$
  %admits an extension to a homeomorphism $\partial f : \partial_Y X
  %\to \partial Y$ such that if $x_n \to \omega \in
  %\partial_Y X$, then $f(x_n) \to \partial f(\omega)$.
%\end{theorem}

\begin{theorem}[{\cite[Thm.~3.2]{align}}]
  \label{thm:dtboundaries}
  Let $f : X \to Y$ be a coarsely surjective, coarsely alignment
  preserving map between geodesic 
  $\delta$-hyperbolic spaces.  Then $f$ induces an embedding
  $\partial Y \hookrightarrow \partial X$.
\end{theorem}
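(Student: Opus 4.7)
The plan is to define a boundary map $\bar f : \partial Y \to \partial X$ by lifting geodesic rays from $Y$ to $X$ via coarse surjectivity, and to verify well-definedness and injectivity using coarse alignment preservation. First, I will use the standard reformulation in a geodesic $\delta$-hyperbolic space that a triple $(a,b,c)$ is $K$-aligned if and only if the Gromov product $\langle a,c\rangle_b$ is bounded by a constant depending on $K$ and $\delta$. Coarse alignment preservation then reads: $\langle a,c\rangle_b$ small in $X$ implies $\langle f(a),f(c)\rangle_{f(b)}$ small in $Y$, and its contrapositive---large Gromov product in $Y$ centered at $f(b)$ forces large Gromov product in $X$ centered at $b$---will be the workhorse throughout.

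\textbf{Construction and Gromov convergence.} I fix basepoints $y_0 \in Y$ and $x_0 \in X$ with $d_Y(f(x_0),y_0) \leq C$ by coarse surjectivity. For each $\xi \in \partial Y$, choose a geodesic ray $\gamma:[0,\infty) \to Y$ from $y_0$ to $\xi$, and lift it to $x_n \in X$ with $d_Y(f(x_n),\gamma(n)) \leq C$. Since $\langle f(x_n),f(x_m)\rangle_{f(x_0)}$ differs from $\langle \gamma(n),\gamma(m)\rangle_{y_0} = \min(n,m)$ by a bounded error, the contrapositive of coarse alignment preservation applied to $(x_n,x_0,x_m)$ drives $\langle x_n,x_m\rangle_{x_0} \to \infty$, so $(x_n)$ is a Gromov sequence in $X$ defining $\bar f(\xi) \in \partial X$. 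The same argument applied to mixed triples from two distinct lifts establishes independence of choices, and an analogous Gromov product estimate with $\xi_n \to \xi$ gives continuity.

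\textbf{Injectivity.} Suppose $\xi \neq \eta$ in $\partial Y$ but $\bar f(\xi) = \bar f(\eta)$. The lifts $(x_n),(x'_n)$ of $\xi,\eta$ are then equivalent Gromov sequences in $X$, so for any $R$ there are $n,m$ such that $[x_0,x_n]$ and $[x_0,x'_m]$ share a common initial segment ending at some $p \in X$ with $d_X(x_0,p) \geq R$. Both triples $(x_0,p,x_n)$ and $(x_0,p,x'_m)$ are aligned in $X$, so alignment preservation forces $f(p)$ within a uniform distance of both $[f(x_0),f(x_n)]$ and $[f(x_0),f(x'_m)]$ in $Y$. A standard $\delta$-hyperbolic Gromov product estimate then yields $\langle f(x_n),f(x'_m)\rangle_{f(x_0)} \geq d_Y(f(x_0),f(p)) - O(\delta)$; if this lower bound diverges, then $\xi = \eta$, the desired contradiction.

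The main obstacle will be ensuring $d_Y(f(x_0),f(p)) \to \infty$ as $d_X(x_0,p) \to \infty$. This does not follow from coarse alignment preservation in isolation---one can imagine alignment-preserving maps that collapse long geodesics in $X$ into bounded regions of $Y$---but is automatic once $f$ is additionally bornologous or coarsely Lipschitz. The latter is the case in the intended application, since the canonical maps $\iota : \G \to \G'$ between arc and curve models are coarse Lipschitz by construction.
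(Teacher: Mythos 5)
This statement is cited from Dowdall--Taylor and the paper offers no proof of its own, so I am evaluating your argument on its own merits.

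Your construction of the lift and the well-definedness check are in the right spirit, and they echo the actual Dowdall--Taylor setup. However, the injectivity argument has a genuine gap, and your proposed repair does not work. You need $d_Y(f(x_0),f(p)) \to \infty$ whenever $d_X(x_0,p) \to \infty$ along the common initial segment; this is metric \emph{properness} of $f$, a lower bound on image distances. You assert this follows once $f$ is bornologous or coarsely Lipschitz, but those hypotheses give an \emph{upper} bound $d_Y(f(x_0),f(p)) \le L\,d_X(x_0,p) + C$ and say nothing about a lower bound. In fact the maps $\iota : \G \to \G'$ in the intended application are genuinely non-proper --- they collapse all of the witness data $\X^\G \setminus \X^{\G'}$ --- so your fix fails exactly where it needs to succeed. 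Concretely, nothing in the hypotheses prevents $f$ from sending the entire common initial segment of $[x_0,x_n]$ and $[x_0,x'_m]$ into a bounded region near $y_0$, in which case your Gromov-product lower bound is vacuous.

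The underlying issue is that coarse alignment preservation is a one-directional tool, $X \to Y$: it turns small Gromov products in $X$ into small Gromov products in $Y$ (equivalently, by contrapositive, large products in $Y$ into large products in $X$). Your injectivity step needs the opposite implication. The route that uses the hypothesis in its natural direction is to first define the \emph{forward} map $\hat f : \partial_Y X \to \partial Y$, where $\partial_Y X \subset \partial X$ consists of boundary points $b$ for which $f$ carries sequences converging to $b$ to sequences converging in $\partial Y$. Injectivity of $\hat f$ on $\partial_Y X$ is then the natural use of alignment preservation: if $b \ne b'$, a nearest point $p$ to $x_0$ on $[x_n,x'_m]$ stays bounded away from $x_0$, the aligned triple $(x_n,p,x'_m)$ forces $\langle f(x_n),f(x'_m)\rangle_{f(p)}$ bounded, and coarse Lipschitz-ness (used here only to bound $d(f(x_0),f(p))$) transfers this to a bound on $\langle f(x_n),f(x'_m)\rangle_{y_0}$, so $\hat f(b) \ne \hat f(b')$. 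Your lifting construction is precisely the surjectivity of $\hat f$, and the desired embedding $\partial Y \hookrightarrow \partial X$ is $\hat f^{-1}$; its injectivity is then automatic, rather than something to prove separately with the wrong tool. In short: invert a forward bijection; do not try to prove injectivity of the lifted map directly, because that requires converting large Gromov products in $X$ to large ones in $Y$, which alignment preservation does not provide.
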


\noindent
When $\Sigma$ is a punctured sphere, we then conclude using a result
of Gabai:
\begin{theorem}[{\cite[Thm.~1.2]{gabai}}]\label{thm:gabai}
   Let $\Delta$ be the $(n + 4)$-times punctured sphere for $n \geq 0$.  
   Then $\partial \cc \Delta$ is
   homeomorphic to the $n$-dimensional N\"obeling space
   $\R^{2n+1}_n$.
\end{theorem}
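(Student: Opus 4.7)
The plan is to identify $\partial \cc \Delta$ with a concrete topological space via Klarreich's theorem and then verify that this space satisfies an axiomatic characterization of the N\"obeling space $\R^{2n+1}_n$. First, I would invoke Klarreich's theorem, which realizes the Gromov boundary of $\cc \Delta$ as the space $\el(\Delta)$ of ending (minimal, filling) laminations on $\Delta$, topologized as the quotient of filling projective measured laminations by forgetting the projective class. The theorem then reduces to a purely topological statement about $\el(\Delta)$.

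For the $(n+4)$-punctured sphere, the Thurston sphere $\mathcal{PML}(\Delta)$ of projective measured laminations is homeomorphic to $S^{2n+1}$, within which the filling locus is open, dense, and of full measure. I would analyze the quotient $\el(\Delta)$ using train-track coordinates: each generic, filling train track carves out a simplex in $\mathcal{PML}(\Delta)$ recording a lamination via its weight vector, and passing to $\el(\Delta)$ corresponds to identifying weight vectors that carry the same underlying unmeasured lamination. A dimension count using nested sequences of train-track splittings, combined with the fact that unique ergodicity is generic for filling laminations on punctured spheres, should yield $\dim \el(\Delta) = n$.

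Next, I would verify the axiomatic characterization of the N\"obeling space due to Ageev, Levin, and Nag\'orko: $\R^{2n+1}_n$ is the unique (up to homeomorphism) Polish space of topological dimension $n$ that is strongly $n$-universal, $\mathrm{LC}^{n-1}$, and $\mathrm{C}^{n-1}$. The Polish and dimension requirements follow from the preceding analysis. Verifying the connectivity conditions $\mathrm{LC}^{n-1}$ and $\mathrm{C}^{n-1}$ amounts to filling spheres of dimension less than $n$ in $\el(\Delta)$ by continuous families of ending laminations; I would build these fillings inductively via train-track splittings that stay inside the filling locus. Strong $n$-universality requires embedding an arbitrary $n$-dimensional compactum into $\el(\Delta)$ with arbitrarily small, controlled perturbations of a given map, which I would approach by constructing a dense collection of disjointly-realizable $n$-cells parametrized by iterated splitting sequences.

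The main obstacle will be establishing strong $n$-universality. Unlike the ambient sphere $\mathcal{PML}(\Delta)$, the quotient $\el(\Delta)$ has a delicate point-set topology: individual ending laminations can be approximated in many distinct combinatorial ways, and one must arrange perturbations carefully so as not to collapse different approximating measured laminations onto the same ending lamination, nor to exit the (non-closed) filling locus. Controlling this requires a recursive construction of train-track splitting sequences that separate approximating laminations while preserving minimality and filling simultaneously; this uniform separation procedure is the technical heart of the argument.
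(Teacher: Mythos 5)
The paper does not prove this theorem; it is quoted verbatim as Theorem~1.2 of the cited Gabai reference, so there is no internal proof to compare against. Your proposal is, however, a faithful high-level sketch of Gabai's actual argument: pass to the ending lamination space $\el(\Delta)$ via Klarreich's theorem, then verify the Ageev--Levin--Nag\'orko axiomatic characterization of the $n$-dimensional N\"obeling space (Polish, $n$-dimensional, $LC^{n-1}$ and $C^{n-1}$, strongly $n$-universal) using train-track splitting sequences constrained to stay within the filling locus; and you correctly single out strong $n$-universality as the technical crux. One step you treat as routine that is not: establishing $\dim \el(\Delta) = n$ is itself a substantial theorem in Gabai's work, not a ``dimension count.'' The fibers of $\mathcal{PML}^{\mathrm{fill}}(\Delta) \to \el(\Delta)$ are simplices of transverse measures whose dimension varies from point to point, and the quotient topology is non-Hausdorff-adjacent enough that generic unique ergodicity alone does not give the upper bound; Gabai's proof of $\dim \el(\Delta) \le n$ goes through a delicate covering construction built from almost-filling paths. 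If you were to flesh this sketch out, that estimate would need as much attention as the universality argument.
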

\noindent
In particular, by the universal embedding property of N\"obeling
spaces \cite{nobeling} any $n$-dimensional compactum $Z$ embeds into $\partial
\cc \Delta \subset \partial \M$.  For general $\Sigma$, we apply a
result of Rafi--Schleimer \cite[Thm.~7.1]{rafischleimer} to obtain an
embedding of $\partial \cc \Delta$ into $\partial \cc \Sigma \subset
\partial \M$, which
completes the proof of \cref{thm:cm_hyp}.

\begin{proposition}[{\cite[Prop.~4.23]{ga_asdim}}]\label{prop:elSembed}
   Let $\Sigma$ be a finite-type hyperbolic surface of genus $g$ 
   and $\Delta$ the $(n+4)$-times punctured sphere, where $n = g - 1 -
  \lceil \frac 12 \chi(\Sigma) \rceil$. Then
   $\partial \cc \Delta$ embeds into $\partial \cc \Sigma$.
 \end{proposition}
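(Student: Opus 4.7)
The plan is to construct a hyperelliptic-type branched covering $\pi : \Sigma \to \Delta$ of degree two and invoke Rafi--Schleimer's Theorem~7.1 \cite{rafischleimer}, which relates curve graphs under covers. Since quasi-isometric embeddings between (uniformly) Gromov hyperbolic spaces induce topological embeddings on Gromov boundaries, the desired embedding $\partial \cc \Delta \hookrightarrow \partial \cc \Sigma$ will follow.

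First, I would construct the cover $\pi : \Sigma \to \Delta$ so that $\Delta$ is a punctured sphere with exactly $n+4$ punctures. Writing $\chi(\Sigma) = 2 - 2g - p$, where $p$ is the total number of boundary components and punctures of $\Sigma$, an arithmetic check gives $n+4 = 2g + 2 + \lfloor p/2 \rfloor$. In the closed case ($p = 0$) the classical hyperelliptic involution on $\Sigma_g$ has $2g+2$ branch points, so the quotient punctured sphere $\Delta = S^2_{2g+2}$ has the right count, and Riemann--Hurwitz recovers $\chi(\Sigma) = 2\chi(\Delta^{\mathrm{cl}}) - (2g+2) = 2 - 2g$. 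For $p > 0$, I would choose an involution that pairs the punctures of $\Sigma$ (fixing a single puncture when $p$ is odd); the quotient orbifold's underlying surface then acquires $\lfloor p/2 \rfloor$ additional punctures beyond the $2g+2$ from the branch locus, matching the statement.

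Next, I would apply Rafi--Schleimer's Theorem~7.1 to $\pi$: pulling a curve $c \in V(\cc \Delta)$ back to any component of $\pi^{-1}(c)$ yields a quasi-isometric embedding $\pi^\ast : \cc \Delta \to \cc \Sigma$. Since Masur--Minsky show that both $\cc \Delta$ and $\cc \Sigma$ are uniformly $\delta$-hyperbolic geodesic spaces, $\pi^\ast$ extends canonically to a topological embedding $\partial \cc \Delta \hookrightarrow \partial \cc \Sigma$ by the standard functoriality of the Gromov boundary under quasi-isometric embeddings of hyperbolic spaces.

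The main obstacle is step one: verifying that the required $\Z/2$-symmetry on $\Sigma$ exists with the precise combinatorics needed, namely that the punctures can be paired up (or one fixed, when $p$ is odd) by the involution while the branch locus remains disjoint from the images of the paired punctures. In principle this is a classical construction, but care is needed across the various parity cases for $p$ to match the ceiling in the formula $n = g - 1 - \lceil \tfrac{1}{2} \chi(\Sigma) \rceil$. Once such a cover is produced, the remainder of the argument is essentially formal.
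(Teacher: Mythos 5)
Your proof is correct and takes essentially the same approach as the paper: construct a hyperelliptic-type degree-two orbifold cover $\Sigma \to \Delta$, invoke Rafi--Schleimer's Theorem~7.1 (their orbifold-cover refinement of the QI-embedding theorem), and pass to Gromov boundaries. The parity bookkeeping you flag does go through: choosing the involution to fix $f = p \bmod 2$ of the punctures/boundary components (placing them on the branch locus) and to pair the remaining $p-f$ freely gives a quotient orbifold with exactly $(2g+2) + (p-f)/2 = 2g+2+\lfloor p/2\rfloor = n+4$ marked points, as needed.
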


\subsubsection{For infinite-type surfaces}\label{sec:asdim_it}

We prove the following:

\begin{theorem}\label{thm:ccpt_met_asdim}
  Let $S$ be an infinite-type surface and let $\M$ be
  a witness-cocompact metric arc and curve model on $S$. Then
  $\asdim V(\mathcal{M}) = \infty$.
\end{theorem}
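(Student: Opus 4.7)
The plan is to reduce the problem to finite-type surfaces via \cref{lem:eqisec}, which yields a quasi-isometric embedding $\sigma_W : V(\M_W) \hookrightarrow V(\M)$ for every connected witness $W$ of $\M$. It then suffices to produce connected witnesses $W_n$ of $\M$ with $\asdim V(\M_{W_n}) \to \infty$. Since $\M_{W_n}$ is a cocompact arc and curve model on the finite-type surface $W_n$ by witness-cocompactness, it is an HHS by \cref{thm:hhs}, so I can hope to apply either \cref{thm:cm_hyp} directly or the HHS rank bound noted in the remark following it.

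I then case-split on $k \dfn \sup\{j : \M \text{ has } j \text{ pairwise disjoint connected witnesses}\} \in \N \cup \{\infty\}$. If $k = \infty$, for each $n$ I pick $n$ disjoint connected witnesses $V_1, \ldots, V_n \in \xc^\M$ and enclose them in a compact connected essential subsurface $W_n \subset S$; by closure of $\X^\M$ under enlargement, $W_n$ is itself a connected witness of $\M$, and each $V_i \subset W_n$ descends to a connected witness of $\M_{W_n}$, so $\M_{W_n}$ has HHS rank at least $n$. By \cite[Thm.~1.15]{quasiflats}, $\asdim \M_{W_n} \geq n$. If instead $k < \infty$, I fix a maximal disjoint collection $V_1, \ldots, V_k$ and enlarge $V_1$ inside $S \setminus (V_2 \cup \cdots \cup V_k)$ to a compact connected essential subsurface $V_1^n$ with $g(V_1^n) - \lceil \chi(V_1^n)/2 \rceil \geq n$, which is possible since $S$ is infinite-type. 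By maximality of $k$, any two disjoint connected witnesses of $\M_{V_1^n}$ would combine with $V_2, \ldots, V_k$ to produce $k+1$ pairwise disjoint connected witnesses of $\M$, a contradiction; hence $\M_{V_1^n}$ has no orthogonal pair of unbounded domains and is $\delta$-hyperbolic, whereupon \cref{thm:cm_hyp} gives $\asdim \M_{V_1^n} \geq n$. Composing with $\sigma_{W_n}$ (respectively $\sigma_{V_1^n}$) via \cref{lem:eqisec} and letting $n \to \infty$ yields $\asdim V(\M) = \infty$ in both cases.

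The main obstacle is the correspondence between connected witnesses of $\M_W$ and connected witnesses of $\M$ contained in $W$, which underlies the maximality argument transferring from $\M$ down to $\M_{V_1^n}$. Concretely, I need that for connected essential $V \subset W$ and any $u \in V(\M)$, $\rho_W(u)$ essentially intersects $V$ if and only if $u$ does; this should follow from the characterization of $\rho_W(u)$ as the collection of essential intersections of $u$ with $W$ given in the remark of \cref{sec:subproj}, but requires some care in tracing how these intersections lift through the cover $S_W \to S$. A secondary point is ensuring $g(V_1^n) - \lceil \chi(V_1^n)/2 \rceil \to \infty$ within the fixed complement $S \setminus (V_2 \cup \cdots \cup V_k)$; this is immediate from $S$ being infinite-type, though whether the complexity grows through genus or through ends and boundary depends on the topology of $S$.
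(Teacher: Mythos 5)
Your proposal follows essentially the same route as the paper: reduce via \cref{lem:eqisec}, split on the supremum $w_\M$ of cardinalities of pairwise disjoint connected witnesses, use the HHS rank bound when $w_\M = \infty$, and use $\delta$-hyperbolicity together with \cref{thm:cm_hyp} when $w_\M < \infty$. The only point you handle less carefully than the paper is the choice of which witness to enlarge in the finite case: the paper explicitly selects $\Delta_0$ adjacent to an infinite-type component of $S \setminus \bigcup_i \Delta_i$ (which must exist since $S$ is infinite-type and the $\Delta_i$ are compact), whereas you enlarge $V_1$ by fiat; but you correctly flag this yourself, and it does not affect the argument.
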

\noindent
In particular, since $V(\M)$ is given the induced metric, it
isometrically embeds into $\M$ and \cref{thm:ccpt_asdim} follows.
By \cref{lem:eqisec} and the monotonicity of asymptotic dimension, 
it suffices to find for every $d \in \N$ some witness $W \subset S$
for which $\asdim \M_W \geq d$ (see\ \cite[\S 4.3.2]{ga_asdim}). 

Given a witness-cocompact arc and curve model $\M$ on an infinite-type surface
$\Omega$, let $w_\M \in \N\cup \{\infty\}$ denote the least upper
bound on cardinalities for a set of pairwise-disjoint 
connected witnesses for $\M$. If $w_\M$ is infinite, then for each
$d$ fix a compact subsurface $\Sigma_d$ containing at least $d$
pair-wise disjoint connected witnesses for $\M$. These witnesses are
likewise witnesses for $\M_{\Sigma_d}$, hence $\M_{\Sigma_d}$ has
rank $\nu \geq d$ and $\asdim \M_{\Sigma_d} \geq d$.  If $w_\M$ is
finite, then fix a set $\{\Delta_i\}$ of $w_\M$ pairwise disjoint witnesses, 
with $\Delta_0$ a witness adjacent to an infinite-type component of
$S \setminus \bigcup_i \Delta_i$.  By enlarging $\Delta_0$
disjointly from the remaining $\Delta_i$, we obtain
compact subsurfaces $\Sigma_d \subset S \setminus \bigcup_{i > 0}
\Delta_i$ such that $-\chi(\Sigma_d) > 2d$ and $\Delta_0 \subset
\Sigma_d$.  Since $\Sigma_d \supset \Delta_0$, it is a witness for
$\M$, and each $\M_{\Sigma_d}$ must have rank $\nu = 1$ else we
obtain a set of witnesses for $\M$ of cardinality greater than $w_\M$. It
follows that $\M_{\Sigma_d}$ is $\delta$-hyperbolic.  Applying
\cref{thm:cm_hyp}, we obtain $\asdim \M_{\Sigma_d} \geq d$ as
required. \cref{thm:ccpt_met_asdim} follows. 
%\George{Since the two slashes isn't common, we should probably
%iterate what we've proved before we put the two slashes} 
%\Michael{Done!}
\sqed

\section{A \v Svarc--Milnor lemma for locally bounded
groups}\label{sec:ccar}

\begin{definition}\label{def:bccpt} 
  The action of a group $G$ on a metric graph $\Gamma$ is \textit{bounded-cocompact}
  if, for every closed bounded subgraph $\Lambda \subset \Gamma$, $G\Lambda/G$ is compact.
\end{definition}

\begin{definition}\label{def:ccar}
 A connected metric graph $\Gamma$ with a discrete vertex set $V(\Gamma)$ along with an
 isometric, isomorphic, and continuous action of a group $G$ is a 
 \textit{coarse Cayley--Abels--Rosendal graph for $G$}
  if the action is vertex-transitive and
 bounded-cocompact with coarsely bounded vertex stabilizers.
\end{definition}

%\begin{remark*}
 %Since $V(\Gamma)$ is discrete, $G$ acts bounded-cocompactly on
 %$\Gamma$ if and only if every closed bounded subgraph $\Lambda \subset \Gamma$
 %intersects finitely many $G$-edge orbits.
%\end{remark*}

Recall that a Polish group $G$ is \textit{non-Archimedean} if it has a
(clopen) subgroup neighborhood basis at identity \cite{kechris}.  The following
extends \cref{prop:car}.

\begin{proposition}\label{prop:ccar}
 Let $G$ be a Polish group. If $G$ 
 admits a coarse Cayley--Abels--Rosendal graph then it is locally
 bounded; moreover, for
 any such graph $\Gamma$ the orbit map to 
 $V(\Gamma)$, with the induced metric,
 is a coarse equivalence.  
 If $G$ is non-Archimedean then the converse holds.
\end{proposition}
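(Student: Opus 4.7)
The plan is to establish both directions via the orbit map $\omega \colon G \to V(\Gamma)$ equipped with the induced metric, using the coarse-structural machinery from Section~\ref{sec:coarse}. For the forward direction, let $\Gamma$ be a coarse Cayley--Abels--Rosendal graph with basepoint $x_0$. The continuous isometric action ensures $\omega$ is bornologous by Lemma~\ref{lem:cbaction}, and vertex-transitivity makes $\omega$ surjective, hence cobounded. By Lemmas~\ref{lem:ce_char} and~\ref{lem:expanding_orbit_map}, it suffices to show $A_\alpha \dfn \omega^{-1}(B_\alpha(x_0))$ is coarsely bounded for each $\alpha > 0$. I would take $\Lambda$ to be the subgraph of $\Gamma$ induced by $V(\Gamma) \cap B_\alpha(x_0)$: this has diameter at most $2\alpha$, and is connected since any geodesic from $x_0$ of length at most $\alpha$ passes through intermediate vertices still in $B_\alpha(x_0)$.

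The key step is extracting from the compactness of $G\Lambda/G$ (bounded-cocompactness) that $\Lambda$ has only finitely many $G$-orbits of edges. Vertex-transitivity collapses $V(\Lambda)$ to a single point $\star$ in $G\Lambda/G$, sending each edge to a loop at $\star$ with edges in the same $G$-orbit identified; an infinite collection of such loops would form a non-compact bouquet, so finitely many orbits must suffice. Representing these $N$ orbits (after translation by $G$) by edges $(x_0, g_i x_0)$ for $i = 1, \dots, N$, a path of at most $2\alpha$ edges from $x_0$ to $v \in V(\Lambda)$ expresses $v = h_0 g_{i_1} h_1 \cdots h_{k-1} g_{i_k} x_0$ with each $h_j \in G_{x_0}$. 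Therefore
\[
A_\alpha \subseteq \bigcup_{\substack{k \leq 2\alpha \\ (i_1,\ldots,i_k) \in \{1,\ldots,N\}^k}} G_{x_0} g_{i_1} G_{x_0} g_{i_2} \cdots G_{x_0} g_{i_k} G_{x_0},
\]
a finite union of products of coarsely bounded sets, coarsely bounded by the ideal property and Corollary~\ref{lem:fin_ext_cb}. Thus $\omega$ is a cobounded coarse embedding, hence a coarse equivalence; since $V(\Gamma)$ carries a metric, $\mathcal{E}_L$ is metrizable and Rosendal's characterization yields local boundedness of $G$.

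For the converse, assume $G$ is non-Archimedean and locally bounded. Intersecting a coarsely bounded identity neighborhood with a clopen subgroup from the non-Archimedean basis produces a clopen coarsely bounded subgroup $U \leq G$. Let $d$ be a continuous left-invariant pseudometric inducing $\mathcal{E}_L$ and put $A_n \dfn \overline{B_n(1)}$, so each $A_n$ is coarsely bounded with $\bigcup_n A_n = G$. I construct $\Gamma$ with $V(\Gamma) = G/U$ and, for each pair of distinct cosets $gU \ne hU$, a single edge of length $n(gU,hU) \dfn \min\{n \geq 1 : g^{-1}h \in A_n U\}$. Connectedness, discreteness of $V(\Gamma)$, and the continuous isometric vertex-transitive $G$-action with stabilizer $U$ are then immediate. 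For bounded-cocompactness I rely on the non-Archimedean hypothesis: Proposition~\ref{prop:ros_crit} together with the subgroup structure of $U$ implies any coarsely bounded $A \subset G$ is covered by finitely many left $U$-cosets. Consequently each $A_n U$ projects to a finite subset of $G/U$, so each ball in $\Gamma$ meets only finitely many vertices and hence carries only finitely many full edges between them; closed bounded subgraphs are therefore compact, and bounded-cocompactness follows. The main obstacle lies here: without the non-Archimedean hypothesis one cannot guarantee the finite-union-of-cosets structure on coarsely bounded sets, which is precisely what breaks the converse for general locally bounded Polish groups.
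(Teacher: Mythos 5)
The forward direction is essentially the paper's argument, with two small gaps worth flagging. First, the subgraph you call $\Lambda$ (induced on $V(\Gamma)\cap B_\alpha(x_0)$) need not be bounded: a metric graph can have very long edges between vertices that are close in the path metric, so the midpoint of such an edge can lie far from $x_0$. The paper sidesteps this by fixing \emph{some} connected bounded subgraph containing $B_\alpha(x_0)\cap V(\Gamma)$ (e.g.\ a union of geodesics). Second, ``a path of at most $2\alpha$ edges'' implicitly assumes a positive lower bound on edge lengths; the paper observes this bound explicitly, deducing it from discreteness of $V(\Gamma)$ and vertex-transitivity, and uses it in the compactness argument (otherwise a bouquet of circles of lengths $\to 0$ could be compact with infinitely many edges). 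These are fixable, and your conclusion $A_\alpha\subset\bigcup G_{x_0}g_{i_1}G_{x_0}\cdots g_{i_k}G_{x_0}$ is fine. Note also the paper gets local boundedness more directly: since $V(\Gamma)$ is discrete and the action continuous, the vertex stabilizer is already an open coarsely bounded neighborhood of $1$; you do not need to invoke metrizability of $\mathcal{E}_L$.

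The converse has a genuine gap. Your construction puts an edge between \emph{every} pair of distinct cosets $gU\neq hU$, with length determined by a pseudometric inducing $\mathcal{E}_L$. The claim you rely on for bounded-cocompactness---that ``any coarsely bounded $A\subset G$ is covered by finitely many left $U$-cosets''---is false in general. \cref{prop:ros_crit} gives $A\subset (FU)^n$, but $(FU)^n=FUFU\cdots FU$ is \emph{not} a finite union of left $U$-cosets unless, e.g., $U$ is commensurated by the $f\in F$; in general $UgU$ decomposes into $[U:U\cap gUg^{-1}]$ left cosets, which can be infinite. Concretely, for $G\leq \Map(S)$ with $U=\nu_\Sigma$ and $\Sigma'\subsetneq\Sigma$ large enough that $\nu_{\Sigma'}$ is also coarsely bounded, $\nu_{\Sigma'}\supsetneq\nu_\Sigma$ is coarsely bounded but $[\nu_{\Sigma'}:\nu_\Sigma]=\infty$, so $\nu_{\Sigma'}$ is not a finite union of $\nu_\Sigma$-cosets. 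Consequently, in your $\Gamma$ a single bounded ball can contain infinitely many $G$-inequivalent edges of length $1$, so its image in $\Gamma/G$ is an infinite bouquet of unit circles, which is not compact, and bounded-cocompactness fails.

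The key idea you are missing (and which is the heart of \cref{lem:ccar_existence}) is that you must not put edges between all pairs of cosets; you must \emph{choose} a countable set $Z=\{z_i\}$ generating $G$ over $H$ and put an edge of length $i$ between $gH$ and $kH$ only when $g^{-1}k\in Hz_iH$. This gives exactly one edge orbit per index $i$, so $\Gamma/G$ is a bouquet of circles with one circle of each length $i$, and any bounded subgraph meets only finitely many of them. The non-Archimedean hypothesis enters not to control coarsely bounded sets but simply to guarantee a clopen coarsely bounded \emph{subgroup} $H$ exists so that $G/H$ is discrete and the action is continuous.
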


\begin{proof}
  Let $\omega : G \to V(\Gamma)$ denote the vertex orbit map; since
  the action is continuous, $\omega$ is continuous.  
  Moreover, since $V(\Gamma)$ is discrete, the stabilizer of a vertex in $\Gamma$ is thus a coarsely
  %\George{Isn't the stabilizer of a single vertex open only in the case of a simplicial graph?}
  bounded neighborhood of identity and $G$ is locally bounded.  We
  must show that $\omega$ is bornologous, expanding, and cobounded,
  hence a coarse equivalence: 
  the first follows from \cref{lem:cbaction} and the last from
  vertex transitivity, hence we need only check that $\omega$ is
  expanding. 

  Let $d$ denote the metric on $\Gamma$ and fix a vertex $x \in
  V(\Gamma)$; we assume $\omega$ is the orbit map based at $x$. 
  By \cref{lem:expanding_orbit_map}, 
  it suffices to show that $A_\alpha = \omega^{-1}(B_\alpha(x))$ is coarsely
  bounded.     Fix a connected bounded subgraph $\Lambda_\alpha
  \subset \Gamma$ containing $B_\alpha(x) \cap \img \omega =
  B_{\alpha}(x) \cap V(\Gamma)$ and let $A'_\alpha =
  \omega^{-1}(\Lambda_\alpha) = \{g \in G : gx \in
  V(\Lambda_\alpha)\}$.  Since $V(\Gamma)$ is discrete and $G$ acts
  vertex-transitively, the infimum of edge lengths in $\Gamma$ is
  non-zero. Hence by bounded-cocompactness, $\Lambda_\alpha / G$ is
  a finite graph, 
  %\George{Since $\Gamma$ is a metric graph with the metric topology, compactness doesn't imply finiteness. As an example, consider the countable rose where the edge lengths are $1/n$}, or
  or equivalently $\Lambda_\alpha$ intersects finitely many $G$-orbits of
  edges: the midpoints of edges in $\Lambda_\alpha/G$ are discrete, hence
  must be finite by compactness.   Let $\nu_x \dfn \stab_G(x) \leq G$ denote the
  stabilizer of $x$ and fix a finite set of elements $F_\alpha
  \subset G$ so
  that if $(gx,hx) \in E(\Lambda_\alpha)$, then $g^{-1}h \in
  \nu_x F_\alpha \nu_x$;
  additionally add some element $g_0 \in A'_\alpha$.  Let $m = \diam
  \Lambda_\alpha$.  Then $A'_\alpha \subset (F_\alpha \nu_x)^m$,
  hence $A'_\alpha \supset A_\alpha$ 
  is coarsely bounded since likewise is $\nu_x$.

  The converse when $G$ has small subgroups is shown in the
  following lemma.
\end{proof}

\begin{lemma}\label{lem:ccar_existence}
  If $G$ is a non-Archimedean locally bounded Polish group, 
  then it admits a coarse Cayley--Abels--Rosendal graph.
\end{lemma}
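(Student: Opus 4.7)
My plan is to mimic the Cayley--Abels--Rosendal construction on $G/V$ for a coarsely bounded open subgroup $V \leq G$, but to accommodate infinitely many $G$-orbits of edges by varying edge lengths.

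First, I would select a coarsely bounded clopen subgroup $V \leq G$: local boundedness supplies a coarsely bounded identity neighborhood $U$, and non-Archimedeanness yields a clopen subgroup $V \subset U$, which inherits coarse boundedness from $U$. I would set $V(\Gamma) \dfn G/V$; since $G$ is separable Polish and $V$ is open, $G/V$ is countable. Any $G$-invariant edge set on $V(\Gamma)$ is a disjoint union of $G$-orbits of unordered pairs $\{gV, hV\}$, and such orbits are classified by the quotient of $V \backslash G / V$ by the involution $D \mapsto D^{-1}$; I would enumerate the non-identity classes as $O_1, O_2, \ldots$ with representatives $g_i \in O_i$.

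Next, for each $i \geq 1$, I would add the $G$-orbit of $\{V, g_i V\}$ to $\Gamma$, declaring each of its edges to have length $i$. Since the orbits for different $i$ are disjoint, the length assignment is consistent under the action, and $G$ then acts isometrically on $\Gamma$ by construction; continuity follows because the induced action on the discrete coset space $V(\Gamma)$ is continuous; vertex transitivity is immediate; and the stabilizer of $V$ is $V$ itself, which is coarsely bounded. Since edge lengths are bounded below by $1$, $V(\Gamma)$ is discrete in $\Gamma$, and since every non-trivial coset $gV$ lies in some $O_i$, there is a direct edge from $V$ to $gV$, witnessing connectivity.

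The crux of the argument, and the main obstacle I anticipate, is bounded-cocompactness; the strictly increasing edge lengths are precisely what force it. If $\Lambda \subset \Gamma$ is a closed bounded subgraph of diameter $R$, then every edge of $\Lambda$ has length at most $R$ and hence lies in the orbit indexed by some $i \leq R$. Only finitely many orbits can contribute, and combined with vertex transitivity, this forces $G \Lambda / G$ to be a finite graph, hence compact. The only ancillary pitfall is the careful treatment of double cosets under inversion when indexing the edge orbits, needed to make the length assignment well-defined; once this is handled, all remaining verifications are routine.
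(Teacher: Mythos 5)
Your proof is correct and takes essentially the same approach as the paper's. Both constructions build a metric graph on the coset space $G/V$ for a coarsely bounded clopen subgroup $V$, attach edges indexed by a countable set with increasing lengths, and then argue bounded-cocompactness via the length stratification (you argue the direct implication; the paper argues the contrapositive, that a non-compact image must contain edges of unbounded length). The only cosmetic difference is your choice of edge-labels: you use a full set of representatives for $V \backslash G / V$ modulo inversion, which makes the underlying simple graph complete and gives connectivity for free, whereas the paper uses any countable set $Z$ generating $G$ over $V$ (e.g.\ a coset transversal) and cites generation for connectivity; your variant is slightly more economical since it avoids parallel edges, and you are a bit more explicit about the discreteness of $V(\Gamma)$ and the well-definedness of the length assignment under inversion, but the underlying argument is the same.
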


\begin{proof}
  Fix a coarsely bounded clopen subgroup $H \leq G$ and a countable
  set $Z = \{z_i\}\subset G$ such that $G = \braket{Z,H}$.
  Such a $Z$ always exists: for example, since $G$ is separable
  and $H$ open, $H$ has a countable
  transversal in $G$. 
  Construct a metric graph $\Gamma$
  on the vertex set $G / H$ by attaching an edge of length $i$
  between $g H$ and $k H$ whenever $g^{-1}k \in Hz_i H$. The set
  $Z$ generates $G$ over $H$, hence $\Gamma$ is connected.
  Since $H$ is clopen, the left action of $G$ on %the discrete space 
  $G/H$ is continuous; 
  it induces a continuous, isometric, isomorphic, and
  vertex transitive action on $\Gamma$ with coarsely bounded vertex
  stabilizer $\stab_G(H) = H$.

  We verify bounded-cocompactness: $\Gamma / G$ is the metric
  graph isomorphic to a bouquet of countably many circles $e_i$, each of
  length $i$.  It suffices that 
  if $\Lambda$ is a subgraph of $\Gamma$ for which $\Lambda/G$ is
  not compact, then it is unbounded.  In particular, 
  $\Lambda/G \subset \Gamma / G$ must contain 
  infinitely many edges and thus an edge of length at least
  $i$ for every $i \in \N$, hence likewise does $\Lambda$.  
\end{proof}

\begin{remark*}\label{rmk:bdgen_finiteZ}
    If there exists a finite subset $Z\subset G$ and a coarsely 
    bounded clopen subgroup $H$ such that 
    $G=\langle Z ,H\rangle$, then clearly $H\cup Z$ is 
    also coarsely bounded and hence $G$ is boundedly generated. 
    Conversely, if $G$ is boundedly generated, then we may choose $Z$ 
    in \cref{lem:ccar_existence} above 
    to be finite by \cref{prop:ros_crit}. 
    Hence $G$ is boundedly generated if and only if $Z$ can be chosen to be finite.
\end{remark*}

\begin{remark*}\label{rmk:ccar_build}
  For a Polish group $G$, let $H \leq G$ be a coarsely bounded
  open subgroup and $Z \subset G$ an enumerated countable set that generates $G$
  over $H$.  Let $\cc_{H,Z}(G)$ denote the coarse
  Cayley--Abels--Rosendal graph constructed as in the proof of
  \cref{lem:ccar_existence}; by \cref{prop:ccar}, its
  vertex set is coarse equivalent to $G$.
\end{remark*}

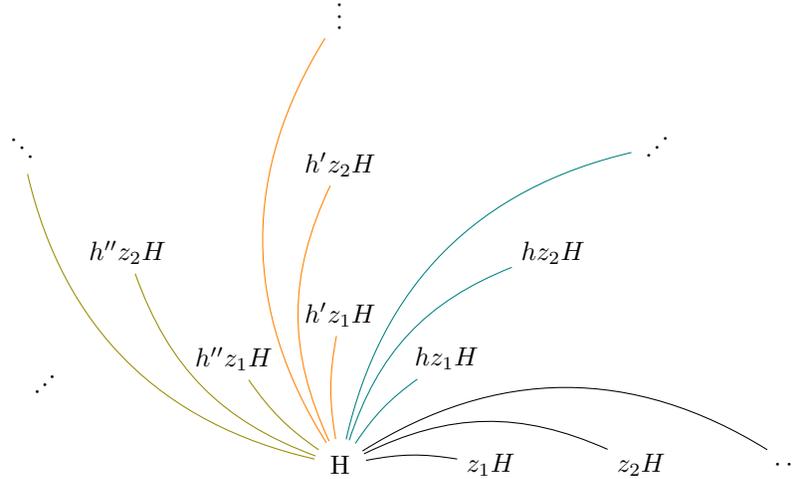
\begin{figure}[h]
  \centering
  \begin{tikzpicture}
%\draw[step=1cm,gray,very thin] (-6,0) grid (3,2);
    \node[circle] (H) at (0,0) {H};
    \node[rotate=135] at (-4,1) {\vdots};
    \foreach \h / \angle / \color in { /0/black, h/45/teal,
    h'/90/orange, h''/135/olive}
      {
        \begin{scope}[rotate=\angle]
          \node (\h 1) at (2,0) {$\h z_1 H$};
          \node (\h 2) at (4,0) {$\h z_2 H$};
          \node[rotate=\angle] (\h 3) at (6,0) {$\cdots$};
          \draw (H) edge[bend left=10,color=\color] (\h 1);
          \draw (H) edge[bend left=25,color=\color] (\h 2);
          \draw (H) edge[bend left=32,color=\color] (\h 3);
        \end{scope}
      }
  \end{tikzpicture}
  \caption{The neighborhood of the vertex $H$ in $\cc_{H,Z}(G)$.
  Here, $h,h',h'' \in H$ and $Z = \{z_i\}$}
  \label{fig:chz}
\end{figure}

\begin{remark*}\label{rmk:car_build}
  The construction of $\cc_{H,Z}(G)$ exactly coincides
with that in \cite[\S3]{bdhl} when $Z$ is finite 
and generates $G$ over $H$.   In this case,
  $\cc_{H,Z}(G)$ has only finitely many edge orbits
  and $G$ acts \textit{cocompactly} on $\cc_{H,Z}(G)$, hence
  $\cc_{H,Z}(G)$ (viewed as a simplicial graph) is a
  Cayley--Abels--Rosendal graph for $G$.  
\end{remark*}

\section{Arc and curve models for subgroups of the mapping class
group }\label{sec:model_general}

In this section, we adapt the construction in \cref{lem:ccar_existence} 
to the context of subgroups of a mapping class group of an infinite type surface $S$. 
More specifically, if 
$G\leq\Map(S)$ is a locally bounded Polish subgroup, we 
construct an arc and curve model (\cref{def:ccpt}) that 
is also a coarse Cayley-Abels-Rosendal graph for $G$.

Let $S$ be an infinite-type surface 
and let $G\leq \Map(S)$ be a locally bounded Polish subgroup.
Suppose $\mu\in\mathcal{K}(S)$ is a finite collection of simple arcs and simple closed 
curves with a coarsely bounded (set-wise) 
$G$-stabilizer $\nu_\mu \dfn \stab_G(\mu)$. Then there exists a countable set
$Z\subset G$ such that $G=\langle\nu_\mu, Z\rangle$.
%\color{black}
Define a graph $\mathcal{M}_{\mu,Z}(G)$ with vertex 
set $V(\mathcal{M}_{{\mu,Z}}(G)):=G\mu$. Consider the $G$-equivariant bijection 
$V(\mathcal{C}_{\nu_\mu,Z}(G))=
G/\nu_\mu \xrightarrow{\simeq}G\mu=V(\mathcal{M}_{\mu,Z}(G))$
defined by $g\nu_\mu\mapsto g\mu$ and obtain $\mathcal{M}_{\mu,Z}(G)$ by
pushing 
forward the (metric) edge relation in $\cc_{\nu_\mu,Z}(G)$ to
$V(\mathcal{M}_{\mu,Z}(G))$. Note that $\mathcal{M}_{\mu,Z}(G)$ is 
$G$-equivariantly isometric to $\cc_{\nu_\mu,Z}(G)$ and hence a 
coarse Cayley--Abels--Rosendal graph for $G$: if there exists such a $\mu$, then 
$G$ is coarsely equivalent to $\mathcal{M}_{\mu,Z}(G)$. 

By \cref{rmk:bdgen_finiteZ}, $G$ is boundedly generated
if and only if $Z$ can be chosen to be finite, which by \cref{rmk:car_build}
occurs if and only if $\cc_{\nu_\mu,Z}(G)$ 
(and hence $\mathcal{M}_{\mu,Z}(G)$) is a Cayley--Abels--Rosendal graph. 
The following lemma shows that there indeed exists a $\mu$ with 
coarsely bounded $G$-stabilizer $\nu_\mu$, whence \cref{thm:model}
follows.

\begin{lemma}\label{lem:mu_exists}
  Let $G\leq\Map(S)$ be a locally bounded Polish 
  subgroup. Then there
  exists $\mu\in\mathcal{K}(S)$ such 
  that $\nu_\mu$ 
  is coarsely bounded in $G$. 
\end{lemma}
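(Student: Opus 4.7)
The plan is to invoke \cref{lem:lb_stab_subsurface} to fix a compact essential subsurface $\Sigma \subset S$ whose pointwise $G$-stabilizer $\nu_\Sigma$ is coarsely bounded in $G$. After enlarging $\Sigma$---which only shrinks $\nu_\Sigma$ and preserves its coarse boundedness, since every subset of a coarsely bounded set is coarsely bounded---we may assume each component of $\Sigma$ has negative Euler characteristic and each component of $\partial\Sigma$ is essential in $S$. The strategy is then to produce $\mu \in \mathcal{K}(S)$ contained in $\Sigma$ that is sufficiently rigid to force $\nu_\Sigma \leq \nu_\mu$ to have finite index, whence $\nu_\mu$ will be coarsely bounded by \cref{lem:fin_ext_cb}.

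Explicitly, I would take $\mu := \partial\Sigma \cup \mu_0$, where $\mu_0$ is the edge set of an ideal triangulation of $\Sigma$ relative to $\partial\Sigma$: a finite collection of pairwise non-isotopic simple arcs with endpoints on $\partial\Sigma$ whose complement in $\Sigma$ is a disjoint union of open disks. The inclusion $\nu_\Sigma \leq \nu_\mu$ is immediate from $\mu \subset \Sigma$. To bound the index, pass to the finite-index subgroup $\nu_\mu^0 \leq \nu_\mu$ of elements fixing every component of $\mu$ setwise with a preferred orientation---the index $[\nu_\mu : \nu_\mu^0]$ is bounded by the number of oriented permutations of the finite set of components of $\mu$.

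The heart of the argument is identifying $\nu_\mu^0$ with $\nu_\Sigma$. The containment $\nu_\Sigma \leq \nu_\mu^0$ is clear. Conversely, given $g \in \nu_\mu^0$, one selects a representative $g_0 \in \Homeo^+(S)$ and isotopes it in a small regular neighborhood of the $1$-complex $\mu$ so that $g_0$ restricts to the identity on $\mu$: each $g_0|_c$ is an orientation-preserving self-homeomorphism of an arc or circle and hence isotopic to the identity, with the local isotopies chosen compatibly at the arc endpoints lying on $\partial\Sigma$. The resulting $g_0$ is the identity on $\mu$ and permutes the disk components of $\Sigma \setminus \mu$ while fixing their boundaries; the Alexander trick applied disk-by-disk isotopes $g_0$ rel $\mu$ to the identity on $\Sigma$. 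Since this final isotopy fixes $\partial\Sigma$, it extends trivially across $S \setminus \Sigma$, producing a representative of $g$ in $U_\Sigma$, whence $g \in \nu_\Sigma$. This yields $[\nu_\mu : \nu_\Sigma] < \infty$, and \cref{lem:fin_ext_cb} completes the proof.

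The main technical obstacle is the simultaneous local isotopy: compatibly normalizing $g_0$ to the identity on every component of $\mu$, especially at the $Y$- or $T$-shaped local models where arcs of $\mu_0$ meet $\partial\Sigma$. This is resolved by a standard regular-neighborhood argument, and essentiality of the boundary components of $\Sigma$ in $S$ ensures that the collar of $\mu$ one works in does not interact with itself across $S$, rendering the extended isotopies unambiguous. Once this technicality is dispatched, the remaining steps---the Alexander trick and extension by identity---are routine.
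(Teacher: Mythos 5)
Your proposal is correct and takes essentially the same route as the paper: both choose $\mu = \partial\Sigma \cup \mu_0$ with $\mu_0$ a filling collection in a subsurface $\Sigma$ whose pointwise $G$-stabilizer $\nu_\Sigma$ is coarsely bounded (via \cref{lem:lb_stab_subsurface}), show that $\nu_\Sigma$ has finite index in $\nu_\mu$ using Alexander's method, and conclude by \cref{lem:fin_ext_cb}. The only material difference is that the paper cites \cite[Prop.~2.8]{fm} where you unpack the regular-neighborhood normalization and Alexander trick by hand, and you pass through the orientation-preserving subgroup $\nu_\mu^0$ rather than the kernel of $\nu_\mu \to S(\mu)$; both give the needed finite-index bound.
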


\begin{proof}
    Since $G$ is locally bounded, \cref{lem:lb_stab_subsurface} 
    tells us that there exists $\Sigma\subset S$ 
    with a coarsely bounded $G$-stabilizer $\nu_\Sigma$.
    Let $\mu_0\in\mathcal{K}(S)\cap\mathcal{K}(\Sigma)$ be a filling 
    collection of arcs and curves in $\Sigma$ and 
    $\mu:=\mu_0\cup\partial\Sigma$; clearly $\nu_\Sigma\leq\nu_\mu$. Let $S(\mu)$ 
    denote the permutation group on $\mu$ and let $K$ denote the kernel 
    of the natural map $\nu_{\mu}\rightarrow S(\mu)$. 
    In general 
    $\nu_\Sigma\subseteq K$. Since $\mu_0$ is filling, 
    by Alexander's method {\cite[Prop. ~2.8]{fm}} we have in fact 
    $\nu_\Sigma=K$. Finally since $\mu$ is a finite collection, 
    so is $S(\mu)$ and consequently $[\nu_\Sigma:\nu_\mu]\leq|S(\mu)|<\infty$.
    By \cref{lem:fin_ext_cb}, $\nu_\mu$  
    is also coarsely bounded in $G$, as required. 
\end{proof}

\section{Asymptotic dimension}\label{sec:nondisp}

Throughout this section, we assume $S$ is an infinite-type
surface with a non-displaceable subsurface for a 
Polish subgroup $\PMap_c(S) \leq G \leq \Map(S)$ such that $G$ is boundedly
generated or  $G \in
\{\overline{\PMap_c(S)},
\PMap(S), \Map(S)\}$ and locally bounded.  
%Recall that
%for a compact, essential subsurface $\Sigma \subset S$, $\nu_\Sigma
%< G$ denotes the (pointwise) $G$-stabilizer of $\Sigma$. 
To prove
\cref{thm:asdim}, it suffices to choose $\mu$ and $Z$ so that
 $\M = \M_{\mu,Z}$ %from \cref{sec:model_general_2} 
 is witness-cocompact: by
\cref{thm:ccpt_met_asdim} $\asdim V(\M) = \infty$, hence likewise $\asdim
\Map(S) = \infty$ by \cref{thm:model}(1).  Since $\PMap_c \leq G$ it suffices to 
ensure that 
\begin{enumerate}[label=\textit{(\roman*)}]
  \item
  $\M$ has a witness, and (by \cref{rmk:wccptint}) 
\item 
that edge- and self-intersection numbers are uniformly bounded.
\end{enumerate}

Only condition \textit{(i)} uses non-displaceability; note also that it may be
satisfied for arbitrary locally bounded Polish subgroups $G \leq
\Map(S)$.  
We choose $\mu$ so that $\nu_\mu$ is coarsely bounded and
$V(\M) = G  \mu$
has a witness. Let
$\Delta \subset S$ to be a compact, essential, $G$-non-displaceable
subsurface sufficiently large that $\nu_\Delta$ is coarsely bounded.
Fix $\mu_0 \in \mathcal{K}(\Delta) \cap \mathcal{K}(S)$ to be a filling
collection of curves in $\Delta$ and let $\mu = \mu_0 \cup \partial
\Delta$.  By \cref{lem:mu_exists},
$\nu_\mu$ is coarsely bounded. For any $g \in
G$, $g \mu_0$ is
filling in $g \Delta$, hence since $g \Delta$ intersects
$\Delta$ essentially likewise does $g \mu = g\mu_0 \cup \partial
g\Delta$: $\Delta$ is a
witness for $\M$.

Intersection numbers are invariant under $G \leq \Map(S)$, hence to
ensure \textit{(ii)} it suffices that
%\begin{itemize}
  %\item[\textit{(ii')}] for every $z_i \in Z$,
%$i(\mu, z_i\mu)$ is uniformly bounded.
%\end{itemize} 
\begin{itemize}
  \item[\textit{(ii')}]
$i(\mu, z_i\mu)$ is uniformly bounded over $z_i \in Z$.
\end{itemize}
In particular, $i(g
\mu,g \mu) = i(\mu,\mu) <\infty$ %is constant and finite, and if
and if $(g\mu,k\mu) \in E(\M)$ then $g^{-1}k =  hz_ih'$ for
some $z_i \in Z$ and $h,h' \in \nu_\mu$, thus $i(g\mu,k\mu) =
i(h^{-1}\mu,z_ih'\mu) = i(\mu,z_i\mu)$.  
When $G$ is
boundedly generated we may choose $Z$ to be finite, hence 
\textit{(ii')} is immediate.
For the remainder of the section, we construct for each locally bounded case 
a (countable) $Z\subset G$
 that generates $G$ over $\nu_\mu$ and
satisfies \textit{(ii')}.

%\begin{remark*}\label{rmk:zcbgen}
  %When $G$ is
%boundedly generated (and hence $Z$ is finite), \textit{(ii')} is
%immediate.
%\end{remark*}

\subsection{Enforcing small
intersection}\label{sec:sm_int_Z}

We first produce topological generating sets for
$G = \overline{\PMap_c(S)},\PMap(S)$ satisfying \textit{(ii')}.  In
particular, since $\nu_\mu$ is open these generate $G$ over $\nu_\mu$.

\begin{lemma}
    There exists a countable generating set $T$ for $\PMap_c(S)$ 
    such that $\{i(\mu,t\mu)\}_{t\in T}$ is 
    finite and hence bounded above.
\end{lemma}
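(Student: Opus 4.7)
The plan is to build $T$ as a countable generating set of Dehn twists around curves with uniformly bounded intersection with $\mu$. Since $\mu \subset \Delta$, I would fix a compact exhaustion $\Delta = \Sigma_0 \subset \Sigma_1 \subset \cdots$ of $S$ so that for each $n \geq 1$ the piece $H_n \dfn \overline{\Sigma_n \setminus \Sigma_{n-1}}$ is a one-holed torus or a pair of pants attached to $\Sigma_{n-1}$ along a single boundary component, with the attaching curve disjoint from $\Delta$ for $n \geq 2$. Under extension by identity $\PMap_c(S) = \bigcup_n \PMap(\Sigma_n, \partial \Sigma_n)$, and by the Lickorish--Humphries theorem each $\PMap(\Sigma_n, \partial \Sigma_n)$ is finitely generated by Dehn twists.

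Next, I would inductively construct nested finite Dehn twist generating sets $T_0 \subset T_1 \subset \cdots$ by extending a Humphries-type chain of curves across the exhaustion. For each $n \geq 1$, the new generators $T_n \setminus T_{n-1}$ consist of Dehn twists around only a bounded number of new curves (e.g.\ the meridian and longitude of the new handle together with a curve connecting it to the previous handle, or Dehn twists around the new boundary components of the new pair of pants), each localized in a small neighborhood of $H_n \cup \partial \Sigma_{n-1}$. For $n \geq 2$ this neighborhood is disjoint from $\Delta$, so each twisting curve $c$ satisfies $i(c, \mu) = 0$; for $n \in \{0, 1\}$ only finitely many generators are introduced, and $i(c,\mu)$ takes only finitely many values. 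Setting $T \dfn \bigcup_n T_n$ yields the desired countable Dehn twist generating set for $\PMap_c(S)$.

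Applying the standard estimate $i(\alpha, T_c \beta) \leq i(\alpha, c) \cdot i(c, \beta) + i(\alpha, \beta)$ on the action of a Dehn twist and summing over components of $\mu$ then gives $i(\mu, T_c \mu) \leq i(\mu, c)^2 + i(\mu, \mu)$ for each generator. Since $i(\mu, c) = 0$ for all but finitely many $c$ corresponding to $t \in T$, the set $\{i(\mu, t\mu) : t \in T\}$ is a finite subset of $\N$ and in particular is bounded above. The main obstacle is verifying that the Humphries chain extends consistently across the exhaustion with only finitely many new Dehn twist generators localized in $H_n$ at each step; this should reduce to the standard generating-set computation for the mapping class group of a compact surface after handle or pants attachment, which requires some bookkeeping depending on the end structure of $S$ (infinite genus vs.\ infinitely many ends) but is otherwise routine.
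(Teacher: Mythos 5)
Your proof takes essentially the same approach as the paper: fix a compact exhaustion $\Delta \subset \Sigma_0 \subset \Sigma_1 \subset \cdots$ of $S$, take $T$ to be the Dehn twists along a nested family of Lickorish/Humphries-type generating curves, and observe that for all but finitely many indices the new twisting curves miss $\Delta$ and hence $\mu$. The paper states this at the level of nested Dehn--Lickorish generating curves $\{C^i_j\}_j \cup \partial\Sigma_i \subset \{C^{i+1}_k\}_k \cup \partial\Sigma_{i+1}$ and notes that the new twists eventually fix $\Delta$ pointwise (hence fix $\mu$), whereas you explicitly request a handle-or-pants exhaustion and pass through the intersection estimate $i(\mu, T_c\mu) \leq i(\mu,c)^2 + i(\mu,\mu)$; both are fine, and your estimate is a mild overkill since $i(c,\mu) = 0$ already forces $T_c\mu = \mu$.

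One small inaccuracy: the threshold $n \geq 2$ for the attaching curve to be disjoint from $\Delta$ cannot generally be taken literally. If $\Delta$ has $b > 1$ boundary components, then after attaching $H_1$ along one of them, $\partial\Sigma_1$ still contains $b-1$ components of $\partial\Delta$, and the exhaustion must eventually attach across these as well; so the attaching curves remain in $\partial\Delta$ for up to $b$ steps. The correct statement is the one the paper uses, namely that this holds for $n$ \emph{sufficiently large} (all but finitely many $n$), which is all the argument needs. With that adjustment your argument matches the paper's.
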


\begin{proof}
    Let $\Sigma_0\subset\Sigma_1\subset\Sigma_1\subset\cdots$ be a compact exhaustion of $S$ such that 
    \begin{enumerate}
        \item $\Sigma_0\supset\Delta$.
        \item If $C_j^i$ denotes the simple closed curves corresponding to the Dehn-Likorish generators for $\Map(\Sigma_i)$, then $\{C_j^i\}_j\cup\partial\Sigma_i\subset \{C^{i+1}_k\}_k\cup\partial\Sigma_{i+1}$.
    \end{enumerate}
    Then the collection of Dehn twists 
    $T:=\{T_{ij}\}$ along the simple 
    closed curves $C_j^i$ generate 
    $\PMap_c(S)$. Moreover, for 
    sufficiently large 
    $i$, $\{C_k^{i+1}\}\setminus\left(\{C_j^i\}\cup\partial\Sigma_i\right)$ 
    only consists of simple closed 
    curves outside of $\Delta$ and hence their corresponding Dehn twists fix 
    $\Delta$ pointwise and 
    therefore $\mu$. As such, only finitely many of 
    the Dehn twists $T_{ij}$ act 
    non-trivially on $\mu$, 
    hence $\{i(\mu,t\mu)\}$ is a finite collection bounded 
    above by the maximum over these finitely many Dehn twists 
    $t\in T$ that act non-trivially on $\mu$.
\end{proof}

Recall that a \textit{handle shift} is a shift map (see
\cref{sec:asdim_ow}) with homeomorphic
subsurfaces $\Sigma_i \cong \Sigma_{1}^1$, and let $h_\pm \in
\Ends_g(S)$ denote the (forward and backward) 
accumulation points of the underlying path, which we will call the
\textit{endpoints} of $h$.  

\begin{lemma}\label{lem:pmap_gen}
  Let $H \subset \PMap(S)$ be a collection of handle shifts 
  such that $\{(h_-,h_+) : h \in H\}$ is dense in
  $\Ends_g(S)\times \Ends_g(S)$.  For any neighborhood
  $1 \in \nu \subset \Map(S)$, $\PMap(S) \leq
  \braket{H,\PMap_c(S),\nu}$.
\end{lemma}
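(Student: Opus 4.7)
The plan is to reduce the containment to a density statement in $\PMap(S)$ and establish that density via an Aramayona--Patel--Vlamis-type theorem combined with the endpoint-density hypothesis on $H$.

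Since $\Map(S)$ admits a local base at identity consisting of open subgroups $U_\Sigma$ for compact essential $\Sigma \subset S$ (cf.\ \cref{rmk:Gbase}), it suffices to treat $\nu = U_\Sigma$ for some such $\Sigma$. For any subset $X \subseteq \Map(S)$ and any open neighborhood $\nu$ of identity, every $g \in \overline{X}$ satisfies $g \in x \nu$ for some $x \in X$, so $\overline{X} \subseteq X \cdot \nu \subseteq \braket{X, \nu}$. Applying this with $X = \braket{H,\PMap_c(S)}$, the lemma reduces to showing $\braket{H,\PMap_c(S)}$ is dense in $\PMap(S)$.

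To establish this density, I would proceed in two steps. First, let $H_{\max}$ denote the collection of all handle shifts in $\PMap(S)$. An Aramayona--Patel--Vlamis-type theorem asserts that $\PMap(S) = \overline{\braket{H_{\max}, \PMap_c(S)}}$: every pure mapping class is approximable in the Polish topology by products of handle shifts and compactly supported mapping classes. Second, I would show $H_{\max} \subseteq \overline{\braket{H, \PMap_c(S)}}$: given $h_0 \in H_{\max}$ with endpoints $(e_-, e_+)$, the density of $\{(h_-, h_+) : h \in H\}$ in $\Ends_g(S) \times \Ends_g(S)$ yields a sequence $h_n \in H$ whose endpoints converge to $(e_-, e_+)$. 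Using the flexibility of handle shifts under compactly supported conjugation, I would produce $c_n \in \PMap_c(S)$ with $c_n h_n c_n^{-1} \to h_0$ in the Polish topology, giving $h_0 \in \overline{\braket{H,\PMap_c(S)}}$. Combining the two steps yields the required density.

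The main obstacle is carrying out the second step: constructing the conjugators $c_n \in \PMap_c(S)$ so that $c_n h_n c_n^{-1}$ converges to $h_0$. Concretely, one must align the supporting subsurfaces and the underlying paths of $h_n$ with those of $h_0$ via a compactly supported homeomorphism, with the residual discrepancy vanishing as $n \to \infty$. This uses that the paths of $h_n$ and $h_0$ can be arranged to agree outside arbitrarily large compact subsurfaces (since both eventually enter any prescribed neighborhood of their common limit endpoints), and that the genus-one subsurfaces defining their shift data can be uniformly matched up to compactly supported mapping classes. Carrying out this alignment carefully, together with the cited decomposition of $\PMap(S)$, completes the proof.
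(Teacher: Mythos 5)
Your approach matches the paper's: reduce the claim to density of $\braket{H,\PMap_c(S)}$ in $\PMap(S)$ via the open-subgroup observation, then derive that density from the topological generation of $\PMap(S)$ by Dehn twists and handle shifts. The one difference is that your second step --- approximating an arbitrary handle shift by $\PMap_c(S)$-conjugates of elements of $H$ --- re-derives in sketch exactly the content of \cite[Thm.~4.4]{avbig}, which the paper simply cites; invoking that result directly would let you dispense with the ``main obstacle'' of constructing the conjugators $c_n$.
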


\begin{proof}
    Let $\nu_P:=\nu\cap\PMap(S)$ be a clopen subgroup of $\PMap(S)$.      
    We know that 
    $\PMap(S)$ is topologically 
    generated by Dehn twists (which are compactly supported)
    and handle shifts {\cite[Thm.~4]{pv}}, {\cite[Cor.~6 and Section 
    2.3]{apv}}. Since $H$ is dense in 
    $\Ends_g(S)\times \Ends_g(S)$, $\PMap(S)$ is in fact topologically 
    generated by Dehn twists and $H$ 
    {\cite[Thm.~4.4]{avbig}}. If we consider translates of $\nu_P$ by 
    Dehn twists and elements of $H$, we therefore  get an open cover of 
    $\PMap(S)$. Hence $\PMap(S)=\langle H,\PMap_c(S),\nu_P\rangle$ which 
    implies $\PMap(S)\leq\langle H,\PMap_c(S),\nu\rangle$.
\end{proof}

\begin{lemma}\label{lem:hshift}
  There exists a countable set of handle shifts $H \subset \PMap(S)$
  whose endpoints are dense in $\Ends_g(S) \times
  \Ends_g(S)$ and for which $i(\mu,h\mu)$ is uniformly bounded
  for $h \in H$.
\end{lemma}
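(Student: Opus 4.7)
The plan is to fix a countable dense subset $\{(e_n^-, e_n^+)\}_{n \in \N}$ of $\Ends_g(S) \times \Ends_g(S)$---which exists since $\Ends(S)$ is compact metrizable, hence second-countable---and for each $n$ construct a handle shift $h_n$ with endpoints $(e_n^-, e_n^+)$ whose support is essentially disjoint from $\mu$ outside of a single controlled crossing of $\Delta$. Density of the endpoint set is then automatic, leaving only the uniform intersection bound to verify.

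First, I would exploit that $\Delta$ is compact, so $S \setminus \Delta$ has only finitely many components and each end of $S$ lies in exactly one of them; let $C_n^\pm$ denote the component containing $e_n^\pm$. To construct $h_n$, I would pick pairwise disjoint one-holed genus-one subsurfaces $\Sigma_i^n$ for $i < 0$ accumulating at $e_n^-$ in $C_n^-$ and for $i \geq 0$ accumulating at $e_n^+$ in $C_n^+$, all disjoint from the compact $1$-complex $\mu$ (this is possible because only finitely many $\Sigma_i^n$ lie in any compact region of $S$). I would then choose a simple path $\gamma_n$ meeting each $\partial \Sigma_i^n$ sequentially, taking $\gamma_n$ entirely inside the relevant component (and disjoint from $\mu$) when $C_n^- = C_n^+$, and otherwise arranging $\gamma_n \cap \Delta$ to be a single simple arc $\alpha_n$ connecting two components of $\partial \Delta$, chosen to realize the minimum of $i(\alpha, \mu_0)$ over all simple arcs joining those boundary components.

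For the intersection bound, let $N_n$ denote the support of $h_n$, a regular neighborhood of $\gamma_n \cup \bigcup_i \Sigma_i^n$. By construction $N_n \setminus \Delta$ is disjoint from $\mu$, so when $C_n^- = C_n^+$ we have $h_n\mu = \mu$ and $i(\mu, h_n\mu) = i(\mu, \mu)$. In the remaining case $N_n \cap \Delta$ is a tubular neighborhood of $\alpha_n$, and $\mu \cap N_n$ consists of at most $i(\alpha_n, \mu_0) + 2$ transverse arcs (two at the endpoints of $\alpha_n$ on $\partial \Delta$, plus one per intersection of $\alpha_n$ with $\mu_0$). Since $\partial \Delta$ has finitely many components there are only finitely many ordered pairs of boundary components that $\alpha_n$ may join, and for each such pair the minimum intersection with $\mu_0$ is finite; let $M$ be the maximum of these minima. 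The shift $h_n$ translates each arc of $\mu \cap N_n$ one step along $\gamma_n$ out of $\Delta$, so the shifted arcs lie in $S \setminus \Delta$ and meet $\mu \subset \Delta$ only along $\partial \Delta$, contributing $O(M)$ additional crossings beyond $i(\mu, \mu)$. Hence $i(\mu, h_n\mu)$ is uniformly bounded in $n$.

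The main obstacle I anticipate is the coordinated construction in the inter-component case: one must select $\alpha_n \subset \Delta$ realizing the minimum with $\mu_0$ compatibly with a simple path $\gamma_n$ of the required sequential form through the prescribed $\Sigma_i^n$. This reduces to standard arc realization inside $\Delta$ followed by extension outside $\Delta$, where disjointness from the compact set $\mu$ is the only constraint; since the subsurfaces $\Sigma_i^n$ accumulate at the ends, such an extension always exists. The finiteness of components of $S \setminus \Delta$ together with the finiteness of $\pi_0(\partial \Delta)$ is what ultimately yields a uniform bound across all $n$.
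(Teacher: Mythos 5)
Your proposal follows the same overall strategy as the paper: pick a countable dense set of endpoint pairs, build a handle shift for each pair whose trace through $\Delta$ is controlled, and exploit the finiteness of $\pi_0(\partial\Delta)$ (i.e.\ of the complementary components $S_1,\ldots,S_b$) to uniformize the intersection bound. The difference lies in how the control inside $\Delta$ is achieved. The paper fixes, once and for all, $\binom{b}{2}$ pairwise disjoint strips $s_{\{i,j\}} \subset \Delta$ and demands that every $h_{xy}$ with $x \in S_i$, $y \in S_j$ have $\mathrm{Domain}(h_{xy}) \cap \Delta = s_{\{i,j\}}$ and restrict there to a single fixed map $h_{ij}$; since $\mu \subset \Delta$, this forces $i(\mu, h_{xy}\mu) = i(\mu, h_{ij}\mu)$ to \emph{literally equal} one of finitely many fixed integers, so the uniform bound is $\max_{i,j} i(\mu, h_{ij}\mu)$ with no estimation. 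You instead require only that the arc $\alpha_n = \gamma_n \cap \Delta$ minimize intersection with $\mu_0$, and then estimate the intersection number. This is where your argument is loose: the claim that the shift pushes the transverse arcs of $\mu \cap N_n$ ``out of $\Delta$'' so that they ``meet $\mu$ only along $\partial\Delta$, contributing $O(M)$ additional crossings'' is not quite right as stated, since $h_n\beta$ has the same $\partial N_n$-endpoints as $\beta$ and so re-enters $\Delta$, running roughly parallel to $\alpha_n$ and picking up $\sim 2\,i(\alpha_n,\mu_0)$ crossings per arc; summing over the $\sim i(\alpha_n,\mu_0)$ transverse arcs gives more like an $O(M^2)$ bound, and even that depends on knowing the isotopy class of $h_n|_{\Delta}$, not just the minimality of $\alpha_n$ (a badly chosen handle shift could twist arbitrarily inside the strip). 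The cleanest fix is exactly what the paper does: fix the restriction $h_n|_{N_n \cap \Delta}$ to one of finitely many reference maps, which you could do compatibly with your minimal-arc choice. So the gap is more one of rigor than of idea, and it is worth noting that the fixed-restriction device sidesteps the intersection estimate entirely.
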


\begin{proof}
  Let $S_1,\ldots S_b$ be the complementary components of $\Delta$. 
  Fix $k=\binom{b}{2}$ many pairwise 
  disjoint strips $s_{\{i,j\}}$, 
  connecting the $i^\text{th}$ and 
  $j^\text{th}$ complementary components
  of $\Delta$ with $i\neq j$. For two 
  distinct ends $x,y \in \Ends_g(S)$ accumulated by 
  genus, consider handle shifts $h_{xy}$ such that  
  \begin{enumerate}
    \item If $x,y\in S_i$, then $\text{Domain}(h_{xy})\cap\Delta=\varnothing$. 
    \item If $x,x'\in S_i,\ y,y'\in S_j$ and $i\neq j$, 
    then $\text{Domain}(h_{xy})\cap\Delta=s_{\{i,j\}}$ and 
    $h_{xy}|_{s_{\{i,j\}}}=h_{x'y'}|_{s_{\{i,j\}}}$.
    \item For $x,y\in\Ends_g(S)$, $h_{yx}=h_{xy}^{-1}$.
  \end{enumerate}
  
  Fix $E\subset\Ends_g(S)\times\Ends_g(S)$ 
  a countable dense subset and set $H:=\{h_{xy}\mid(x,y)\in 
  E\}$.
  Let $h_{ij}:=h|_{s_{\{i,j\}}}$ for 
  $h\in H$ such that $h_-\in S_i,\ h_+\in S_j$. 
  Note that by $(2)$, this is well-defined 
  and for any $h\in H$, since $\mu\subset\Delta$,
  either $h$ fixes $\mu$ pointwise or 
  $i(\mu,h\mu)=i(\mu,h_{ij}\mu)$ for some $i\neq j$. 
  Hence $i(\mu,h\mu)$ is uniformly bounded 
  by $\max_{i,j} i(\mu,h_{ij}\mu)$ for any $h\in H$ as required.
\end{proof}

\noindent
When $G = \overline{\PMap_c(S)}$, it suffices that $Z = T$,
and when $G = \PMap(S)$, that $Z = T \cup H$.  \sqed

\begin{remark*}
By \cite{hill} (\cref{thm:hill}) $G = \PMap(S)$ is locally bounded if and only if it
is boundedly generated, in which case $Z$ may be chosen to be
(in fact) finite.  We include \cref{lem:hshift} so that our
arguments are self-contained.
\end{remark*}

\begin{figure}[h]
  \centering
  \begin{tikzpicture}
%\draw[step=1cm,gray,very thin] (-3,-2) grid (3,2);
    \node[line width=1pt] (surface) at (0,0)
    {\includegraphics[width=0.5\textwidth]{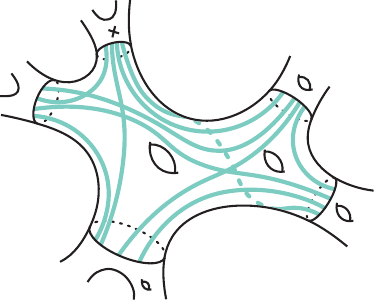}};
  \node at (-2.2,-1.5) {$S_1$};
  \node at (-2,2) {$S_2$};
  \node at (-.65,-.6) {$s_{\{1,2\}}$};
  \node at (1.2,-1.35) {\LARGE $\Delta$};
  \end{tikzpicture}
  \caption{The strips $s_{\{i,j\}}$ in $\Delta$}
  \label{fig:strips}
\end{figure}

We conclude with the case when $G = \Map(S)$. We aim to construct a
transversal 
$I$ for $K \dfn \braket{T,H,\nu_\mu}$ that satisfies
the intersection condition \textit{(ii')}; since $K$ is open, $I$ is countable and we may set
$Z = T \cup H \cup I$. Let $E \dfn\Ends(S)$ and $E_g \dfn \Ends_g(S)$
and consider the exact sequence
\[
1\rightarrow\PMap(S)\rightarrow\Map(S)\xrightarrow{\pi}\Homeo(E,E_g)
\rightarrow 1 \:.
\]
It suffices to construct a (set-theoretic) section $\sigma$ for $\pi$ whose
whose image satisfies \textit{(ii')}.  Then $I_\sigma \dfn \img
\sigma$ is a transversal for $\PMap(S)$ (albeit possibly
uncountable), and since $\PMap(S) \leq K$
by \cref{lem:pmap_gen}, $I_\sigma$ contains a transversal $I \subset
I_\sigma$ for $K$ likewise satisfying \textit{(ii')}.
We construct $\sigma$ below.

% \Michael{Original argument here}
% If $\sigma:\Homeo(E,E_g)\rightarrow\Map(S)$ is a (set-theoretic)
% section for $\pi$,
% then $\img(\sigma)$ is a transversal 
% %(collection of coset representatives) 
% for $\PMap(S)$ and
% $G=\langle\PMap(S),\img(\sigma)\rangle$. With this in mind, our
% first goal will be to construct one such section $\sigma$ whose
% translates of $\mu$ have uniformly bounded intersection.  

\begin{lemma}\label{lem:end_section}
  There exists a (set-theoretic) section $\sigma :
  \Homeo(E,E_g) \to
  \Map(S)$ such that $i(\mu,\sigma(\varphi)\mu)$ is uniformly
  bounded over $\varphi \in \Homeo(E,E_g)$.
\end{lemma}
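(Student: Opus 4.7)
The plan is to leverage the Mann--Rafi \cite{mannrafi} structure of $E$ afforded by local boundedness of $\Map(S)$: there exists a compact $K \subset S$ whose complementary regions partition $E$ into finitely many clopen sets, the Mann--Rafi components $A \in \A$ and $P \in \mathcal{P}$, with each $A$ self-similar and each $P$ homeomorphic to a clopen subset of some $A$. I enlarge $\Delta$ to contain $K$ (preserving non-displaceability and coarse boundedness of $\nu_\Delta$) and assume the complementary end sets $\{E_i\}$ of $\Delta$ are precisely these Mann--Rafi components.

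My first step is to construct a finite toolkit. Let $F$ denote the finite group of type-preserving permutations of $\A \cup \mathcal{P}$. For each $\pi \in F$, realize $\pi$ by a specific \textit{standard swap} $\tilde f_\pi \in \Map(S)$ that permutes the corresponding complementary regions of $\Delta$ via a finite composition of bounded-complexity subsurface exchanges along fixed compact bridges through $\Delta$; this ensures $i(\mu, \tilde f_\pi \mu)$ is uniformly bounded over $\pi \in F$.

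My second step is to factor each $\varphi \in \Homeo(E, E_g)$ as $\varphi = \kappa_1 \cdot (f_{\pi_\varphi})_\ast \cdot \kappa_2$, where $\pi_\varphi \in F$ is the canonical permutation determined by self-similarity (as the unique permutation such that $\varphi(A_\alpha) \cap A_{\pi_\varphi(\alpha)}$ contains a clopen subset homeomorphic to $A_\alpha$) and $\kappa_1, \kappa_2 \in \Homeo(E, E_g)$ preserve each Mann--Rafi component setwise. Self-similarity of each $A_\alpha$ provides homeomorphisms moving arbitrary clopen subsets of type $A_\alpha$ onto a designated standard subset (e.g.\ the ``half'' used by $\tilde f_{\pi_\varphi}$), underwriting the factorization. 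Since $\kappa_1, \kappa_2$ preserve components, they lift to mapping classes $\tilde\kappa_1, \tilde\kappa_2 \in \Map(S)$ supported in the complementary regions of $\Delta$---and hence identity on $\Delta$. I then set $\sigma(\varphi) \dfn \tilde\kappa_1 \cdot \tilde f_{\pi_\varphi} \cdot \tilde\kappa_2$.

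The intersection bound is immediate: since $\tilde\kappa_1, \tilde\kappa_2$ are identity on $\Delta$ and $\mu \subset \Delta$, we have $\tilde\kappa_2 \mu = \mu$ and $\sigma(\varphi)\mu \cap \Delta = \tilde f_{\pi_\varphi} \mu \cap \Delta$, whence $i(\mu, \sigma(\varphi)\mu) \leq \max_{\pi \in F} i(\mu, \tilde f_\pi \mu) < \infty$. The main obstacle will be the factorization step: rigorously establishing the decomposition of $\varphi$ with strictly component-preserving $\kappa_i$. This requires careful use of self-similarity not only to match the large piece of $\varphi(A_\alpha)$ to $A_{\pi_\varphi(\alpha)}$, but also to absorb the spillover of $\varphi(A_\alpha)$ into components other than $A_{\pi_\varphi(\alpha)}$ into the standard swap operations themselves, ensuring all Mann--Rafi components are simultaneously compatible.
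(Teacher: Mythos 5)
Your factorization step is false as stated, and the paper takes a genuinely different route to avoid exactly this obstruction. If $\kappa_1,\kappa_2 \in \Homeo(E,E_g)$ preserve each Mann--Rafi component setwise and $\pi_\varphi$ is a permutation of those components, then
\[
\varphi(A_\alpha) = \kappa_1\bigl(\pi_\varphi(\kappa_2(A_\alpha))\bigr) = \kappa_1\bigl(A_{\pi_\varphi(\alpha)}\bigr) = A_{\pi_\varphi(\alpha)},
\]
so the decomposition forces $\varphi$ to send every component \emph{exactly} onto a component. This fails for a generic $\varphi$: for instance, with two homeomorphic self-similar pieces $A_1, A_2$ a homeomorphism of $E$ can send $A_1$ to a clopen set straddling both, and no post- or pre-composition by component-preserving homeomorphisms can repair that. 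You flag the ``spillover'' as the main obstacle, but your proposed fix --- letting the standard swap $\tilde f_{\pi_\varphi}$ absorb it --- makes $\tilde f_{\pi_\varphi}$ depend on $\varphi$ and not merely on the finite datum $\pi_\varphi$, which destroys the finiteness of the toolkit that the uniform bound on $i(\mu, \tilde f_\pi\mu)$ relies on.

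The paper's proof does not invoke the Mann--Rafi partition at all and sidesteps the problem by refining, for each $\varphi$, the fixed partition $\{U_i\}$ (induced by a fixed finite-type $\Pi \supset \Delta$) to the common refinement $U_{i,j} = U_i \cap \varphi(U_j)$. This $b^2$-piece partition captures the ``spillover'' exactly and is realized by a subsurface $\Pi_\varphi \cong \Sigma = \Sigma_{g,0}^{b^2}$ obtained by padding $\partial \Pi$ with planar pieces. The permutation of the $b^2$ pieces needed to realize $\varphi$ then ranges over the genuinely finite set $\operatorname{Sym}(\pi_0(\partial \Sigma))$, a section $\sigma_0$ of which gives the bounded toolkit, and Richards' classification supplies the extension beyond $\Pi_\varphi$. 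Since $\mu \subset \Delta \subset \Pi$ and $\sigma(\varphi)|_\Pi$ agrees with one of finitely many homeomorphisms, the intersection bound follows. The missing idea in your argument is precisely this passage to the doubly-indexed refinement $U_{i,j}$, which replaces the impossible clean factorization by a permutation of a finer, $\varphi$-dependent-but-uniformly-bounded-cardinality partition.
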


\begin{remark}
  In the following, let $\Sigma^b_{g,p}$ denote the orientable
  surface with genus $g$, $b$ boundary components, and $p$
  punctures.
\end{remark}

\begin{proof}
  Fix some connected, essential subsurface $\Pi \cong
  \Sigma^b_{g,0} \subset S$ such that $\Pi \supset
  \Delta$ and whose complementary components have either zero or
  infinite genus.
  Fix an embedding of $\Pi$
  into $\Sigma = \Sigma_{g,0}^{b^2}$ such that $\Sigma
  \setminus \Pi$ is the disjoint union of copies of
  $\Sigma^b_{0,1}$.  Let $\sigma_0 :
  \operatorname{Sym}(\pi_0(\partial \Sigma)) \to \Map(\Sigma)$ be a
  choice of (set-theoretic) section; note that $i(\mu,\sigma_0(\alpha)\mu)$ is
  uniformly bounded over $\alpha \in
  \operatorname{Sym}(\pi_0(\partial \Sigma))$, a finite set. 

  Let $U_i \subset E$ be the clopen partition induced by $\Pi$,
  and let $U_{i,j} = U_i \cap \varphi(U_j)$; let $S_i \subset
  S\setminus \Pi$ denote the complementary component containing $U_i$ and $C_i\dfn 
  \partial S_i$. Extend each component
  of $\partial \Pi$ by an embedded (but not necessarily
  essential) $\Sigma^{b+1}_{0,0}$ to obtain a subsurface
  $\Pi_\varphi \cong \Sigma$ inducing the partition $U_{i,j}$. Note that some
  components of $S \setminus \Pi_\varphi$ may be disks and that
  every component has either zero or infinite genus.  Let
  $S_{i,j} \subset S \setminus \Pi_\varphi$ denote the complementary 
  component containing $U_{i,j}$ and $C_{i,j} \dfn \partial S_{i,j}$. 

  Extend the embedding $\Pi \hookrightarrow \Pi_\varphi$ to a
  homeomorphism $\psi_\varphi : \Sigma \to \Pi_\varphi$.  Fix a
  %\George{How is this permutation related to $\varphi$?}
  permutation $\alpha_\varphi \in \text{Sym}(\pi_0(\partial
  \Sigma))$ and $\sigma_1(\varphi) \dfn
  \sigma_0(\alpha_\varphi)^{\psi_\varphi} : \Pi_\varphi \to
  \Pi_\varphi$
  such that $C_j' \dfn \sigma_1(\varphi)C_j$ separates $C_{i,j}$ from
  $\sigma_1(\varphi)\Pi$ for all $i$.   Let $\Pi' \dfn \sigma_1(\varphi)\Pi$ and $S_j'$ denote the component of $S \setminus \Pi'$ separated by $C_j'$.
  It follows that $\Pi' = \sigma_1(\varphi)\Pi$ induces the partition
  $\varphi(U_j)$, each of which is contained in $S'_j$. We note that
  $C'_j$ is homeomorphic to $C_j$; likewise, since the $S_j$ and
  $S_j'$ have either zero or infinite genus, $S_j$ has zero
  genus if and only if $U_j \cap \Ends_g(S) = \varnothing$ if and
  only if $\varphi(U_j) \cap \Ends_g(S) = \varnothing$ if and only
  if $S_j'$ has zero genus, and otherwise both $S_j,S_j'$ have infinite genus.
  Finally, we apply Richards' classification theorem \cite{richards} to 
  obtain $\sigma(\varphi)$ by extending
  $\sigma_1(\varphi)|_\Pi$ by homeomorphisms $\overline S_j \to
  \overline S_{j}'$
   that induce $\varphi$ on each $U_j$.  Up to isotopy $\mu \subset
   \Delta \subset \Pi$, hence
   %We note that 
  $i(\mu,\sigma(\varphi)\mu) =
  i(\mu,\sigma_0(\alpha_\varphi)\mu)$
  %i(\mu,\sigma_0(\alpha_\varphi)\mu)$, which
  is bounded
  independently of $\varphi$.
\end{proof}

\begin{figure}[h]
  \centering
  \begin{tikzpicture}
% \draw[step=1cm,gray,very thin] (-6,-2) grid (6,3);
\node at (-6.1,-0.5) {$C_1$};
\node at (-1.1,-0.5) {$C_2$};
\node at (5.9,0.2) {$C_2'$};
\node at (0.9,0.2) {$C_1'$};
\node at (-4.15,-1.3) {$\Pi$};

%\draw [decorate,decoration={brace,amplitude=5pt,raise=-1ex}]
%  (-3.5,3) -- (-1.2,3) node[midway,yshift=1.1em]{$U_2$};
%\draw [decorate,decoration={brace,amplitude=5pt,raise=-1ex}]
%  (-5.8,3) -- (-3.8,3) node[midway,yshift=1.1em]{$U_1$};
% \draw [decorate,decoration={brace,amplitude=5pt,raise=-6ex}]
%   (-5.8,3) -- (-5.2,3) node[midway,yshift=-1.3em]{$U_{11}$};
% \draw [decorate,decoration={brace,amplitude=5pt,raise=-6ex}]
%   (-4.8,3) -- (-3.8,3) node[midway,yshift=-1.3em]{$U_{12}$};
% \draw [decorate,decoration={brace,amplitude=5pt,raise=-6ex}]
%   (-3.5,3) -- (-2.4,3) node[midway,yshift=-1.3em]{$U_{21}$};
% \draw [decorate,decoration={brace,amplitude=5pt,raise=-6ex}]
%   (-2.1,3) -- (-1.2,3) node[midway,yshift=-1.3em]{$U_{22}$};

\draw node at (-5.5,1.5) {$U_{1,1}$}
    node at (-4.3,2.1) {$U_{1,2}$}
    node at (-2.9,2.1) {$U_{2,1}$}
    node at (-1.7,2.1) {$U_{2,2}$};

\node[line width=1pt]  at (-3.5,0)
    {\includegraphics[scale=.9]{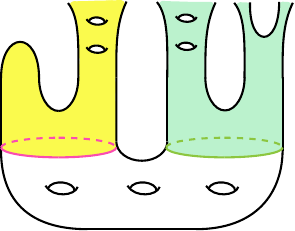}};
\node at (0,0) {$\xrightarrow{\sigma(\varphi)}$};
\node[line width=1pt]  at (3.5,0)
    {\includegraphics[scale=.9]{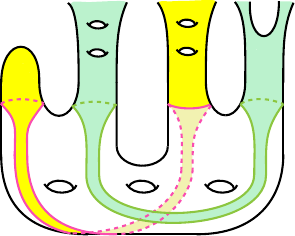}};
\node at (-1.75,0) {$S_2$};
\node at (-5.35,0) {$S_1$};
\node (u1') at (.7,1.1) {$S_1'$};
\node (p1) at (1.55,.5) {};
\draw[->] (u1') edge[bend right=20] (p1.center);

\node (pi') at (0.8,-1.5) {$\Pi'$}; %{$\sigma(\varphi)\Pi$};
\node (p0) at (2,-.5) {};
\draw[->] (pi') edge[bend left=20] (p0);

% \node (u2') at (6.2,1.1) {$\varphi(U_2)$};
% \node (p2) at (5.35,.5) {};
% \draw[->] (u2') edge[bend left=20] (p2.center);

\node (u2'') at (1.6,1.85) {$S_2'$};
\node (p3) at (2.65, 1.2) {};
\draw[->] (u2'') edge[bend right=20] (p3.center);

  \end{tikzpicture}
  
  \caption{The map $\sigma(\varphi)$. Here, $U_{1,1}=\varnothing$}
  \label{fig:}
\end{figure}

% Now that we have such a section, if there exists 
% a countable subset $I_\sigma$ of 
% $\text{Image}(\sigma)$ that generates 
% $G$ along with $K:=\langle H,T,\nu_\mu\rangle\leq\Map(S)$, then 
% we may take $Z=H\cup T\cup I_\sigma$ to complete the proof of \cref{thm:asdim}. 
% Hence it is sufficient to show that $I_\sigma$ exists 

% By \cref{lem:pmap_gen}, $\PMap(S)\leq K$. 
% The natural map 
% $\pi:\Map(S)\rightarrow\Homeo(E,E_g)$ has kernel $\PMap(S)$, hence
% the quotient map $\pi_K : \Map(S) \to \Map(S)/K$ factors through $\Homeo(E,E_g)$; let
% $\xi : \Homeo(E,E_g) \to \Map(S)/K$ such that 
% induces a map $\pi_K:\Map(S)/K\rightarrow\Homeo(E,E_g)$. Let $\sigma_K$ be a 
% section of $\pi_K$ and $\sigma$ be the section of $\pi$ defined above in 
% \cref{lem:end_section}. Then 
% $f=\sigma\circ\sigma_K$ gives a section 
% of the quotient map 
% $\Map(S)\rightarrow\Map(S)/K$. Hence $\{f(\varphi K)\mid \varphi\in\Map(S)\}$ 
% is a transversal i.e. a collection of 
% coset representatives of $K$ and by 
% \cref{lem:end_section}, $i(\mu,f(\varphi K)\mu)$ is uniformly bounded; moreover 
% since $\nu_\mu\subset K$, this 
% transversal is countable. \sqed

\bibliographystyle{amsalpha}
\bibliography{pac}

%%%%%%%%%%%%%%%%%%%%%%%%%%%%%%%%%%%%%
\end{document}